\newenvironment{red}{\relax\color{red}}{\hspace*{.5ex}\relax}
\newcommand{\ber}{\begin{red}}
\newcommand{\er}{\end{red}}
\numberwithin{equation}{section}
\newcommand{\nc}{\newcommand}
\nc{\op}{\operatorname}
\theoremstyle{plain}
\newtheorem{lemma}{Lemma}[subsection]
\newtheorem{prop}[lemma]{Proposition}
\newtheorem{theorem}[lemma]{Theorem}
\newcommand{\Prop}{\begin{prop}}
\newcommand{\enprop}{\end{prop}}
\newcommand{\Lemma}{\begin{lemma}}
\newcommand{\enlemma}{\end{lemma}}
\newcommand{\Th}{\begin{theorem}}
\newcommand{\enth}{\end{theorem}}
\newtheorem{corollary}[lemma]{Corollary}
\newcommand{\Cor}{\begin{corollary}}
\newcommand{\encor}{\end{corollary}}
\newtheorem{definition}[lemma]{Definition}
\newtheorem{conjecture}[lemma]{Conjecture}
\newcommand{\Def}{\begin{definition}}
\newcommand{\edf}{\end{definition}}
\newtheorem{sublemma}[lemma]{Sublemma}
\newcommand{\Sublemma}{\begin{sublemma}}
\newcommand{\ensub}{\end{sublemma}}
\theoremstyle{definition}
\newtheorem{remark}[lemma]{Remark}
\newtheorem{Convention}[lemma]{Convention}
\newcommand{\Conv}{\begin{Convention}}
\newcommand{\enconv}{\end{Convention}}
\newcommand{\Rem}{\begin{remark}}
\newcommand{\enrem}{\end{remark}}
\newcommand{\C}{\mathbb {C}}
\newcommand{\Q}{\mathbb {Q}}
\newcommand{\Z}{{\mathbb Z}}
\newcommand{\B}{{\mathbf{B}}}
\newcommand{\A}{{\mathbf A}}
\newcommand{\CC}{{\mathscr{C}}}
\newcommand{\one}{{\bf{1}}}
\newcommand{\seteq}{\mathbin{:=}}
\newcommand{\hd}{{\operatorname{hd}}}
\newcommand{\soc}{{\operatorname{soc}}}
\newcommand{\g}{{\mathfrak{g}}}
\newcommand{\Hom}{\operatorname{Hom}}
\newcommand{\isoto}[1][]{\mathop{\xrightarrow%
[{\raisebox{.3ex}[0ex][.3ex]{$\scriptstyle{#1}$}}]%
{{\raisebox{-.6ex}[0ex][-.6ex]{$\mspace{2mu}\sim\mspace{2mu}$}}}}}
\newcommand{\Ext}{\operatorname{Ext}}
\newcommand{\eq}{\begin{eqnarray}}
\newcommand{\eneq}{\end{eqnarray}}
\newcommand{\eqn}{\begin{eqnarray*}}
\newcommand{\eneqn}{\end{eqnarray*}}
\newcommand{\on}{\operatorname}
\newcommand{\Ker}{\on{Ker}}
\newcommand{\bni}{\be[{\rm(i)}]}
\newcommand{\bna}{\be[{\rm(a)}]}
\newcommand{\QED}{\end{proof}}
\newcommand{\Proof}{\begin{proof}}
\newcommand{\soplus}{\mathop{\mbox{\normalsize$\bigoplus$}}\limits}
\newcommand{\To}[1][{\hs{2ex}}]{\xrightarrow{\,#1\,}}
\newcommand{\ba}{\begin{array}}
\newcommand{\ea}{\end{array}}
\newcommand{\bi}{\begin{enumerate}[{\rm(i)}]}
\newcommand{\monoto}{\rightarrowtail}
\newcommand{\set}[2]{\left\{#1 \mathbin{;} #2 \right\}}
\newcommand{\hs}{\hspace*}
\newcommand{\eqsub}{\begin{subequations}\begin{eqnarray}}
\newcommand{\eneqsub}{\end{eqnarray}\end{subequations}}
\newcommand{\ol}{\overline}
\nc{\la}{\lambda}
\nc{\lam}{\lambda}
\nc{\U}[1][\g]{U_q(#1)}
\nc{\te}{\tilde{e}}
\nc{\tei}{\tilde{e}_i}
\nc{\tf}{\tilde{f}}
\nc{\tfi}{\tilde{f}_i}
\nc{\tU}{\widetilde U_q(\g)}
\nc{\tE}{\tilde{E}}
\nc{\tF}{\tilde{F}}
\nc{\tk}{\tilde{k}}
\nc{\tkone}{\tk_{\ol{1}}}
\nc{\teone}{\tilde{e}_{\ol{1}}}
\nc{\tfone}{\tilde{f}_{\ol{1}}}
\nc{\teibar}{\tilde{e}_{\ol{i}}} \nc{\tfibar}{\tilde{f}_{\ol{i}}}
\nc{\tki}{{\tk}_{\ol {i}}}
\nc{\BZ}{{\mathbb{Z}}}
\nc{\al}{\alpha}
\nc{\qs}{{q}}
\nc{\lan}{\langle}
\nc{\ran}{\rangle}
\nc{\re}{{\mathrm{re}}}
\nc{\wt}{\operatorname{wt}}
\nc{\ch}{\operatorname{ch}}
\nc{\Uf}[1][\g]{U^-_q(#1)}
\nc{\Ue}{U^+_q(\g)}
\nc{\eps}{\varepsilon}
\nc{\vphi}{\varphi}
\nc{\sphi}{\varphi^*}
\nc{\seps}{\varepsilon^*}
\nc{\nn}{\nonumber}
\def\max{{\mathop{\mathrm{max}}}}
\nc{\vp}{\varpi}
\nc{\cls}{{\operatorname{cl}}}
\nc{\Wt}{{\operatorname{Wt}}}
\nc{\Us}{U'_q(\g)}
\nc{\La}{\Lambda}
\nc{\ro}{{\rm(}}
\nc{\rf}{{\rm)}}
\nc{\norm}{{\mathrm{norm}}}
\nc{\qbox}{\quad\mbox}
\nc{\braid}{{\mathfrak{B}}}
\nc{\Ad}{\operatorname{Ad}}
\nc{\Aut}{\operatorname{Aut}}
\nc{\dt}[1]{\tilde{\tilde #1}}
\nc{\Sn}{S^{{\mathrm{norm}}}}
\nc{\aff}{{\rm{aff}}}
\nc{\rk}{{\mathrm{rk}}}
\nc{\tP}{\widetilde{P}}
\nc{\tW}{\widetilde{W}}
\nc{\Dyn}{\mathrm{Dyn}}
\nc{\tD}{\widetilde{\Delta}}
\nc{\height}{{\operatorname{ht}}}
\nc{\bl}{\bigl(}
\nc{\br}{\bigr)}
\nc{\Hecke}{\mathrm{H}}
\nc{\HA}{\Hecke^{\mathrm{A}}}
\nc{\HB}{\Hecke^{\mathrm{B}}}
\newcommand{\scbul}{{\,\raise1pt\hbox{$\scriptscriptstyle\bullet$}\,}}
\nc{\vac}{{\phi}}
\nc{\Bt}{\B_\theta(\g)}
\nc{\be}{\begin{enumerate}}
\nc{\ee}{\end{enumerate}}
\nc{\low}{{\mathrm{low}}}
\nc{\upper}{{\mathrm{up}}}
\nc{\Zodd}{\Z_{\mathrm{odd}}}
\nc{\Ft}[1][n]{\mathbb{P}\mathrm{ol}_{#1}}
\nc{\Ftf}[1][n]{\widetilde{\mathbb{P}\mathrm{ol}}_{#1}}
\nc{\KA}{\on{K}^{\mathrm{A}}}
\nc{\KB}{\on{K}^{\mathrm{B}}}
\nc{\Res}{\on{Res}}
\nc{\Fc}[1][{n,m}]{\mathbf{F}_{#1}}
\nc{\tphi}{\tilde{\varphi}}
\nc{\CO}{\mathscr{O}}
\nc{\inte}{\mathrm{int}}
\nc{\Oint}{\mathcal{O}^{\ge0}_{\inte}}
\nc{\vs}{\vspace}
\nc{\tL}{\widetilde{L}}
\nc{\tu}{\tilde{u}}
\nc{\noi}{\noindent}
\nc{\heigh}{\mathfrak{t}}
\nc{\lowest}{\mathfrak{l}}
\nc{\rootl}{\mathsf{Q}}
\nc{\cl}{{\rm{cl}}}
\nc{\uqpg}{U'_q(\mathfrak g)}
\nc{\Oh}{\widehat{\mathcal{O}}}
\newenvironment{rouge}
{\color{red}}
{}
\nc{\bred}{\begin{rouge}}
\nc{\ered}{\end{rouge}}
\nc{\KLR}{quiver Hecke algebra}
\nc{\KLRs}{quiver Hecke algebras}
\nc{\cor}{\mathbf{k}}
\nc{\cora}{{\cor(A)}}
\nc{\haut}{\mathrm{ht}}
\nc{\tens}{\mathop\otimes}
\nc{\gmod}{\mbox{-$\mathrm{gmod}$}}
\nc{\proj}{\mbox{-$\mathrm{proj}$}}
\nc{\gproj}{\mbox{-$\mathrm{gproj}$}}
\nc{\smod}{\mbox{-$\mathrm{mod}$}}
\nc{\h}{\mathfrak h}
\nc{\Rnorm}{R^{\rm{norm}}}
\nc{\Runiv}{R^{\rm{univ}}}
\nc{\Vhat}{\widehat{V}}
\nc{\F}{\mathcal{F}}
\def\T{{\mathcal T}}
\nc{\fd}[1][A]{\on{\mathrm{flat.dim}_{#1}}}
\nc{\bP}{{\mathbb{P}}}
\nc{\bPh}{\widehat{\mathbb{P}}}
\nc{\bK}{\widehat{\mathbb{K}}}
\nc{\bV}[1][{n}]{\widehat{V}^{\otimes{#1}}}
\nc{\bVK}[1][{n}]{\widehat{V}^{\otimes{#1}}_K}
\nc{\opp}{\mathrm{opp}}
\nc{\col}{\colon}
\nc{\bnum}{\be[{\rm(i)}]}
\nc{\oep}{\epsilon}
\nc{\qtext}{\quad\text}
\nc{\qtextq}[1]{\quad\text{#1}\quad}
\nc{\longtwoheadrightarrow}[1][]{\xymatrix{\ar@{->>}[r]^-{{#1}}&}}
\nc{\epiTo}[1][]{\longtwoheadrightarrow[{#1}]}
\nc{\epito}{\twoheadrightarrow}
\nc{\monoTo}[1][]{\xymatrix{\ar@{>->}[r]^-{{#1}}&}}
\nc{\sym}{\mathfrak{S}}
\nc{\inp}[1]{{({#1})_{\mathrm{n}}}}
\nc{\rtl}{\rootl}
\nc{\wtd}{\widetilde}
\nc{\etens}{\boxtimes}
\nc{\ds}[1]{\mathrm{d}(#1)}
\nc{\rmat}[1]{{r}_{\mspace{-2mu}\raisebox{-.5ex}{${\scriptstyle{#1}}$}}}
\nc{\shc}{\mathcal{C}}
\nc{\Fct}{{\on{Fct}}}
\nc{\tC}{\widetilde{\shc}}
\nc{\Zp}{\Z_{\ge0}}
\nc{\tPhi}{\widetilde{\Phi}}
\nc{\tT}{{\tilde{\T}}}
\nc{\Ob}{\on{Ob}}
\nc{\bwr}{\mbox{\large$\wr$}}
\nc{\Img}{\on{Im}}
\nc{\Ab}{\mathcal{A}^{\mathrm{big}}}
\nc{\Sb}{\mathcal{S}^{\mathrm{big}}}
\nc{\As}{\mathcal{A}}
\nc{\Ss}{\mathcal{S}}
\nc{\ntens}{\widetilde{\otimes}}
\nc{\hR}{\widehat{R}}
\nc{\nconv}{\star}
\nc{\ts}{\tilde{s}}
\nc{\sho}{\mathcal{O}}
\nc{\bc}{\begin{cases}}
\nc{\ec}{\end{cases}}
\nc{\UA}{U_q'(\widehat{\mathfrak{sl}_N})}
\nc{\KR}{R_K}
\nc{\cQ}{\mathcal{Q}}
\nc{\Irr}{\mathcal{I}rr}
\nc{\tQ}{\widetilde{\cQ}}
\nc{\bs}{\mathbf{s}}
\nc{\bL}{\mathbb{L}}
\nc{\KP}{{\mathrm{KP}}}
\nc{\db}{\mathsf{b}^*}
\nc{\bfa}{{\mathbf{a}}}
\nc{\bfc}{{\mathbf{c}}}
\nc{\Po}{(P_\cl)_0}
\renewcommand{\Im}{\op{Im}}
\nc{\mono}{\rightarrowtail}
\nc{\tr}{\on{tr}}
\nc{\K}{\on{K}}
\newlength{\mylength}
\title[Symmetric quiver Hecke algebras and R-matrices II]
{Symmetric quiver Hecke algebras and R-matrices of quantum affine algebras II}
\author[S.-J. Kang, M. Kashiwara, M. Kim]{Seok-Jin Kang$^{1}$, Masaki Kashiwara$^{2}$ and Myungho Kim}
\address{Department of Mathematical Sciences
         and
         Research Institute of Mathematics \\
         Seoul National University \\ Seoul 151-747, Korea}
         \email{sjkang@math.snu.ac.kr}
\address{Research Institute for Mathematical Sciences \\
          Kyoto University \\ Kyoto 606-8502, Japan \\
          \& Department of Mathematical Sciences
         and
         Research Institute of Mathematics \\
         Seoul National University \\ Seoul 151-747, Korea}
         \email{masaki@kurims.kyoto-u.ac.jp}
\address{School of Mathematics, Korea Institute for Advanced Study \\ Seoul 130-722, Korea}
         \email{mhkim@kias.re.kr}
\thanks{$^{1}$This work was supported by NRF Grant \# 2013-035155 and by NRF Grant \# 2012-0008829}
\thanks{$^{2}$This work was partially supported by Grant-in-Aid for
Scientific Research (B) 22340005, Japan Society for the Promotion of
Science.}
\keywords{Quantum affine algebra, Quiver Hecke algebra, Quantum group}
\subjclass[2010]
{81R50, 16G, 16T25,17B37}
\date{August 3, 2013}
\begin{document}

\begin{abstract}
Let $\g$ be an untwisted affine Kac-Moody algebra
of type $A^{(1)}_n$ $(n \ge 1)$ or $D^{(1)}_n$ $(n \ge 4)$ and
let  $\g_0$ be the underlying finite-dimensional simple Lie subalgebra of $\g$. For each Dynkin quiver $Q$ of type $\g_0$,
Hernandez and Leclerc (\cite{HL11}) introduced a
tensor subcategory  $\CC_Q$ of the category of finite-dimensional integrable $\uqpg$-modules and proved that the Grothendieck ring of $\CC_Q$ is isomorphic to  $\C [N]$, the coordinate ring of the unipotent group $N$  associated with $\g_0$.
 We apply the generalized quantum affine Schur-Weyl duality
introduced in \cite{KKK13}
to construct an exact functor $\F$
from the category of finite-dimensional graded $R$-modules
to the category $\CC_Q$, where  $R$ denotes the symmetric quiver Hecke algebra
associated to $\g_0$.
We prove that the  homomorphism induced  by the functor $\F$ coincides with the homomorphism of Hernandez and Leclerc and show that the functor $\F$ sends the
simple modules to the simple  modules.
\end{abstract}

\maketitle

\section*{Introduction}
Recently, Khovanov-Lauda (\cite{KL09}) and Rouquier (\cite{R08}) independently introduced the
 {\it quiver Hecke algebras} which categorify the negative half of quantum groups.
More precisely, if $U_q(\g)$ is a quantum group associated with a symmetrizable Kac-Moody algebra $\g$, then  there exists a family of graded algebras $\{R(n) \}_{n \in \Z_{\ge 0}}$ such that
the Grothendieck ring of the category consisting of finite-dimensional graded $R(n)$-modules is isomorphic to  the integral form $U^-_{\Z[q,q^{-1}]}(\g)^\vee$
of the dual of negative half of $U_q(\g)$.
Soon after, it is also shown  in \cite{VV09, R11}  that  if $\g$ is symmetric, then the isomorphism classes of finite-dimensional self-dual simple $R(n)$-modules correspond to the upper global basis (=Lusztig's dual canonical basis).
In particular, if $\g$  is a simply-laced  finite type semisimple Lie algebra and if we forget the grading of $R(n)$, then quiver Hecke algebras provide a categorification of the coordinate ring $\C[N]$ of the unipotent group associated with  $\g$, since  $U^-_{\Z[q,q^{-1}]}(\g)^\vee$  is specialized to  $\C[N]$ at  $q=1$.

On the other hand, in \cite{HL11} Hernandez and Leclerc  proposed  another categorification of $\C[N]$  for a simply-laced finite-dimensional simple Lie algebra
$\g$  by using representations of quantum affine algebras.
Let us briefly recall their construction.
Let $\g$ be an untwisted affine  Lie  algebra  associated with
the finite-dimensional  semisimple Lie algebras $\g_0$ which is simply-laced.
Let $\uqpg$ be the quantum affine algebra corresponding to $\g$ and let $\CC_\g$ be the category of finite-dimensional integrable $\uqpg$-modules.
For each  Dynkin quiver  $Q$ of type $\g_0$, Hernandez and Leclerc defined an abelian subcategory $\CC_Q$ of $\CC_\g$ which contains some evaluation modules of fundamental representations parameterized by the vertices of Auslander-Reiten quiver $\Gamma_Q$ of $Q$.
The category $\CC_Q$ is stable under taking tensor product so that the  Grothendieck group $ \K(\CC_Q)$ is endowed with  a ring structure.
They showed that the complexified Grothendieck ring $\C \otimes _\Z \K(\CC_Q)$ is isomorphic to the coordinate ring $\C [N]$
 of the maximal unipotent group $N$
associated with $\g_0$.
Moreover, under  this  isomorphism,
the set of  isomorphism classes
of simple objects in $\CC_Q$ corresponds to the upper global basis of $\C[N]$.

To summarize, we have two different categorifications of the algebra $\C[N]$ and its upper global basis;  one  via quiver Hecke algebras and
 the other  via quantum affine algebras.
 Hence it is natural to ask whether they are related or not.
 More precisely, Hernandez and Leclerc asked whether there is a tensor functor from the category consisting of finite-dimensional graded $R(n)$-modules to the category $\CC_Q$ which sends a simple module to a simple module (see \cite[\S 1.6]{HL11}).

 In this paper, we answer the above question affirmatively.
The desired functor is an example of the {\it generalized quantum affine Schur-Weyl duality functors}, which are  developed in \cite{KKK13}.
Suppose that we have a set $J$ of pairs of good $\uqpg$-modules and invertible elements in the base field.
It is equivalent to saying that we have a set of  evaluation modules of some good $\uqpg$-modules.
One can  associate a quiver $\Gamma^J$ with $J$, and hence a symmetric quiver Hecke algebra $R^J$, by investigating the  pole distribution of the normalized R-matrices between the evaluation modules.
Then we have a tensor functor $\F_m : R^J(m) \gmod \to \CC_\g$, which enjoys several  nice properties.
For example, if the underlying graph of $\Gamma^J$ is of finite simply-laced type, then the functor $\F_m$ is exact.

To apply this general scheme to the above question, we first need to know the denominators of normalized R-matrices between fundamental representations.
While  all the denominators are already known for the type $A^{(1)}_n$ (see, for example \cite{DO94}),
only  partial cases were known for the type $D^{(1)}_n$.
So we calculate all the unknown denominators for type $D^{(1)}_n$ in the appendix. For the type $E^{(1)}_n$, we provide a conjecture on the order of poles of normalized R-matrices (Conjecture \ref{conj:simple poles}).
Next, we take a set $J$ consisting of some evaluation modules of fundamental representations in $\CC_Q$, which correspond to the simple representations of the quiver $Q$ in the Auslander-Reiten quiver $\Gamma_Q$ (see \eqref{eq:J}).
Then by the information of the poles and their order of normalized R-matrices, we conclude that the associated quiver $\Gamma^J$ is isomorphic to the opposite quiver $Q^{\rm rev}$  and hence there exists a tensor functor $\F \col R^J \gmod \to \CC_Q$ (Theorem \ref{thm:corresponding KLR}). Since the underlying graph of $\Gamma^J$ is of type $\g_0$, we have $ U^-_\A(\g_0)^\vee \simeq \K(R^J \gmod)$ and the functor $\F$ is exact.
We show that the homomorphism $\phi_\F\col  \K(R^J \gmod) \to \K(\CC_Q)$ induced by $\F$ is
the same as the isomorphism in \cite{HL11} if we forget the grading on the category $R^J \gmod$ (Theorem \ref{thm:dual pbw generators}). As a corollary, we conclude that the functor sends a simple module to a simple module (Corollary \ref{cor:simple to simple}).
 We note that our proofs of these results rely on the results of
Hernandez-Leclerc (\cite{HL11}).

This paper is organized as follows:
In Section 1, we recall the basic materials on quiver Heck algebras and quantum affine algebras.
In Section 2, we recall the construction of generalized quantum affine Schur-Weyl duality functors following \cite{KKK13}.
In Section 3, the category $\CC_Q$ is reviewed and several lemmas for the next section are proved.
In Section 4, we  define  the functor $\F$ and establish our main theorems.
The appendix is devoted to the calculation of  the  denominators of
 the normalized R-matrices between fundamental representations for the quantum affine algebra $U'_q(D^{(1)}_n)$.

\smallskip

{\bf Acknowledgements}\
We would like to thank Bernard Leclerc
who kindly explained us his results with David Hernandez.
We would like to thank also Se-jin Oh for many
helpful discussions.

\section{Quiver Hecke algebras and  quantum affine algebras}

\subsection{Quantum groups and global basis}\label{subsec:Qgroups}
In this section, we recall the definition of quantum groups and global basis.
Let $I$
be an index set. A \emph{Cartan datum} is a quintuple $(A,P,
\Pi,P^{\vee},\Pi^{\vee})$ consisting of
\begin{enumerate}[(a)]
\item an integer-valued matrix $A=(a_{ij})_{i,j \in I}$,
called  a \emph{symmetrizable generalized Cartan matrix},
 which satisfies
\bni
\item $a_{ii} = 2$ $(i \in I)$,
\item $a_{ij} \le 0 $ $(i \neq j)$,
\item $a_{ij}=0$ if $a_{ji}=0$ $(i,j \in I)$,
\item there exists a diagonal matrix
$D=\text{diag} (\mathsf s_i \mid i \in I)$ such that $DA$ is
symmetric, and $\mathsf s_i$ are positive integers.
\end{enumerate}

\item  a free abelian group $P$,
called the \emph{weight lattice},
\item $\Pi= \{ \alpha_i \in P \mid \ i \in I \}$, called
the set of \emph{simple roots},
\item $P^{\vee}\seteq\Hom(P, \Z)$, called the \emph{co-weight lattice},
\item $\Pi^{\vee}= \{ h_i \ | \ i \in I \}\subset P^{\vee}$, called
the set of \emph{simple coroots},
\end{enumerate}

satisfying the following properties:
\begin{enumerate}
\item[(i)] $\langle h_i,\alpha_j \rangle = a_{ij}$ for all $i,j \in I$,
\item[(ii)] $\Pi$ is linearly independent,
\item[(iii)] for each $i \in I$, there exists $\Lambda_i \in P$ such that
           $\langle h_j, \Lambda_i \rangle =\delta_{ij}$ for all $j \in I$.
\end{enumerate}
We call $\Lambda_i$ a \emph{fundamental weight}.
The free abelian group $\rootl\seteq\soplus_{i \in I} \Z \alpha_i$ is called the
\emph{root lattice}. Set $\rootl^{+}= \sum_{i \in I} \Z_{\ge 0}
\alpha_i\subset\rootl$ and $\rootl^{-}= \sum_{i \in I} \Z_{\le0}
\alpha_i\subset\rootl$. For $\beta=\sum_{i\in I}m_i\al_i\in\rootl$,
we set
$|\beta|=\sum_{i\in I}|m_i|$.

Set $\mathfrak{h}=\Q \otimes_\Z P^{\vee}$.
Then there exists a symmetric bilinear form $(\quad , \quad)$ on
$\mathfrak{h}^*$ satisfying
$$ (\alpha_i , \alpha_j) =\mathsf s_i a_{ij} \quad (i,j \in I)
\quad\text{and $\lan h_i,\lambda\ran=
\dfrac{2(\alpha_i,\lambda)}{(\alpha_i,\alpha_i)}$ for any $\lambda\in\mathfrak{h}^*$ and $i \in I$}.$$

Let $q$ be an indeterminate. For each $i \in I$, set $q_i = q^{\,\mathsf s_i}$.

\begin{definition} \label{def:qgroup}
The {\em quantum group} $U_q(\g)$ associated with a Cartan datum
$(A,P,\Pi,P^{\vee}, \Pi^{\vee})$ is the  algebra  over
$\mathbb Q(q)$ generated by $e_i,f_i$ $(i \in I)$ and
$q^{h}$ $(h \in P^\vee)$ satisfying  the  following relations:
\begin{equation*}
\begin{aligned}
& q^0=1,\ q^{h} q^{h'}=q^{h+h'} \ \ \text{for} \ h,h' \in P,\\
& q^{h}e_i q^{-h}= q^{\lan h, \alpha_i\ran} e_i, \ \
          \ q^{h}f_i q^{-h} = q^{-\lan h, \alpha_i\ran} f_i \ \ \text{for} \ h \in P^\vee, i \in
          I, \\
& e_if_j - f_je_i = \delta_{ij} \dfrac{K_i -K^{-1}_i}{q_i- q^{-1}_i
}, \ \ \mbox{ where } K_i=q^{\mathsf s_i h_i}, \\
& \sum^{1-a_{ij}}_{r=0} (-1)^r \left[\begin{matrix}1-a_{ij}
\\ r\\ \end{matrix} \right]_i e^{1-a_{ij}-r}_i
         e_j e^{r}_i =0 \quad \text{ if } i \ne j, \\
& \sum^{1-a_{ij}}_{r=0} (-1)^r \left[\begin{matrix}1-a_{ij}
\\ r\\ \end{matrix} \right]_i f^{1-a_{ij}-r}_if_j
        f^{r}_i=0 \quad \text{ if } i \ne j.
\end{aligned}
\end{equation*}
\end{definition}

Here, we set $[n]_i =\dfrac{ q^n_{i} - q^{-n}_{i} }{ q_{i} - q^{-1}_{i} },\quad
  [n]_i! = \prod^{n}_{k=1} [k]_i$ and
  $\left[\begin{matrix}m \\ n\\ \end{matrix} \right]_i= \dfrac{ [m]_i! }{[m-n]_i! [n]_i! }\;$
  for each $m,n \in Z_{\ge 0}$, $i \in I$.

We have a comultiplication
$\Delta \colon U_q(\g) \rightarrow U_q(\g) \otimes U_q(\g) $
given by
 \begin{align*}
 & \Delta (q^h) = q^h \otimes q^h, &
 & \Delta (e_i) = e_i \otimes K_i^{-1} + 1 \otimes e_i, &
 & \Delta (f_i) = f_i \otimes 1 + K_i \otimes f_i.
  \end{align*}

Let $U_q^{+}(\g)$ (resp.\ $U_q^{-}(\g)$) be the subalgebra of
$U_q(\g)$ generated by  the $e_i$'s (resp.\  the $f_i$'s), and let $U^0_q(\g)$
be the subalgebra of $U_q(\g)$ generated by $q^{h}$ $(h \in
P^{\vee})$. Then we have the \emph{triangular decomposition}
$$ U_q(\g) \simeq U^{-}_q(\g) \otimes U^{0}_q(\g) \otimes U^{+}_q(\g),$$
and the {\em weight space decomposition}
$$U_q(\g) = \bigoplus_{\beta \in \rootl} U_q(\g)_{\beta},$$
where $U_q(\g)_{\beta}\seteq\set{ x \in U_q(\g)}{\text{$q^{h}x q^{-h}
=q^{\langle h, \beta \rangle}x$ for any $h \in P$}}$.

We have the following automorphisms on $U_q(\g)$:

\bna
\item $\Q$-algebra involution $- \col U_q(\g) \rightarrow U_q(\g)$ given by
 \eqn \overline{e_i} ={e_i}, \ \overline {f_i}=f_i, \ \overline{q^h}=q^{-h},  \ \text{and} \  \overline q =q^{-1}.
 \eneqn
\item $\Q(q)$-algebra involution $\vee \col U_q(\g) \rightarrow U_q(\g)$ given by
 \eqn e_i^{\vee}=f_i, \ f_i^{\vee}=e_i, \ \text{and} \ (q^h)^{\vee}=q^{-h}.
 \eneqn
\item $\Q(q)$-algebra anti-involution $* \col U_q(\g) \rightarrow U_q(\g)$ given by
\eqn  e^*_i=e_i, \ f^*_i=f_i, \ \text{and} \ (q^h)^*=q^{-h}.
 \eneqn
 \ee

Let $\A= \Z[q, q^{-1}]$ and set
$$e_i^{(n)} = e_i^n / [n]_i!, \quad f_i^{(n)} =
f_i^n / [n]_i! \ \ (n \in \Z_{\ge 0}).$$
Let $U_{\A}^{+}(\g)$ (resp.\ $U_{\A}^{-}(\g)$) be the
$\A$-subalgebra of $U_q(\g)$ generated by $e_i^{(n)}$ (resp.\
$f_i^{(n)}$) for $i\in I$, $n \in \Z_{\ge 0}$.

For any $i \in I$,
there exists a unique $\Q(q)$-linear endomorphisms $e'_i$
of $U^-_q(\g)$  such that
\eqn
e'_i(f_j)=\delta_{i,j} \ (j \in I), &
 e'_i(xy) = e'_i (x) y + q_i^{\langle h_i, \beta \rangle} x e'_i(y) \ (x \in U^{-}_q(\g)_{\beta}, y \in U^-_q(\g)).
\eneqn

For each $ i \in  I$, any element $x \in U_q^-(\g)$ can be written uniquely as
\eqn
x = \sum_{n \ge 0} f_i^{(n)} x_n\quad\text{ with $x_n \in \Ker(e'_i)$.}
\eneqn
We define the {\it Kashiwara operators} $\tilde e_i, \tilde f_i$ on
$U_q^-(\g)$ by
\eqn
\tilde e_i x = \sum_{n \ge 1} f_i^{(n-1)}x_n, & \tilde f_i x = \displaystyle\sum_{n \ge 0} f_i^{(n+1)}x_n,
\eneqn
and set
\eqn
&L(\infty) = \sum_{\ell \ge0, i_i, \ldots, i_\ell \in I}
\A_0 \tilde f_{i_1} \cdots \tilde f_{i_\ell} \cdot \one \subset U_q^-(\g),
\quad \overline{L(\infty)} = \set{\overline x}{ x \in L(\infty)}, \\
&B(\infty) = \set{\tilde f_{i_1} \cdots \tilde f_{i_\ell} \cdot \one \mod q L(\infty)}{\ell \ge0, i_i, \ldots, i_\ell \in I}  \subset L(\infty) / q L(\infty),
\eneqn
where $\A_0=\set{g \in \Q(q)}{g  \ \text{is regular at} \ q=0}$.

The set $B(\infty)$ is a $\Q$-basis of $L(\infty) / q L(\infty)$ and the natural map
\eqn L(\infty) \cap \overline{L(\infty)} \cap U^-_\A(\g) \rightarrow L(\infty) / q L(\infty)\eneqn
is a $\Q$-linear isomorphism.
Let us denote the inverse of the above isomorphism by $G^{\low}$.
Then the set
$$\B^\low \seteq  \set{G^\low(b)}{b \in B(\infty)}$$
forms an $\A$-basis of $U_\A^-(\g)$ and is called by the {\it lower global basis} of $U_q^-(\g)$.

Recall that there exists a unique non-degenerate symmetric bilinear form $(\ ,\ )$ on $U^-_q(\g)$ such that
\eqn (\one,\one )=1, & (f_i u, v)=(u,e'_i v) \quad
\text{for $i \in I$, $u,v  \in U_q^-(\g)$}
\eneqn
(see \cite[\S 3.4]{Kas91}).
Set $$U^-_\A(\g)^\vee \seteq \set{x \in U^-_q(\g)}
{ \text{$(x,y) \in \A$ for all $y \in U^-_\A(\g)$} }.$$
Then $U^-_\A(\g)^\vee$ has an $\A$-algebra structure as a subalgebra of $U^-_q(\g)$.
For each $b \in B(\infty)$, define $G^{\upper}(b) \in U^-_q(\g)$ as the element satisfying
$$(G^{\upper}(b), G^{\low}(b')) =\delta_{b,b'} $$ for $b' \in B(\infty)$.
The set
$$\B^\upper \seteq  \set{G^\upper(b)}{b \in B(\infty)}$$
forms an $\A$-basis of $U_\A^-(\g)^\vee$ and is called the {\it upper global basis} of $U_q^-(\g)$.
Note that we have $*(\B^\low) = \B^\low$, and $*(\B^\upper) = \B^\upper$ (\cite{Kas91, Kas932}).

Define the {\it dual bar involution} $\db\col U_q^-(\g) \to U_q^-(\g)$ so that
$\db(x)$ satisfies
$$(\db(x),y) = \overline{(x,\overline y)}$$
for all $ y \in U_q^-(\g)$ (see \cite[\S~3.1]{Kimura12}).
By the  definition, we have $\db(G^\upper(b))  = G^\upper(b)$ for all $b \in B(\infty)$.

\subsection{Quiver Hecke algebras}
In this section,
we recall the definition of the \KLRs\ introduced in \cite{KL09, R08}.
Let $\cor$ be an arbitrary base field.
Let $J$ be a finite index set and
$A=(a_{i,j})_{i,j \in J}$ a symmetrizable generalized Cartan matrix as in the preceding subsection.
Let $\rtl^+=\soplus\nolimits_{i \in J} Z_{\ge 0} \al_i$ be the positive root lattice of the symmetrizable Kac-Moody Lie algebra corresponding to $A$.

For $i,j\in J$ such that $i\not=j$, set
$$S_{i,j}=\set{(p,q)\in\Z_{\ge0}^2}{(\al_i , \al_i)p+(\al_j , \al_j)q=-2(\al_i , \al_j)}.
$$
Let us take  a family of polynomials $(Q_{ij})_{i,j\in J}$ in $\cor[u,v]$
which are of the form
\begin{equation} \label{eq:Q}
Q_{ij}(u,v) = \begin{cases}\hs{5ex} 0 \ \ & \text{if $i=j$,} \\
\sum\limits_{(p,q)\in S_{i,j}}
t_{i,j;p,q} u^p v^q\quad& \text{if $i \neq j$}
\end{cases}
\end{equation}
with $t_{i,j;p,q}\in\cor$. We assume that
they satisfy $t_{i,j;p,q}=t_{j,i;q,p}$ (equivalently, $Q_{i,j}(u,v)=Q_{j,i}(v,u)$) and
$t_{i,j:-a_{ij},0} \in \cor^{\times}$.

We denote by
$\sym_{n} = \langle s_1, \ldots, s_{n-1} \rangle$ the symmetric group
on $n$ letters, where $s_i\seteq (i, i+1)$ is the transposition of $i$ and $i+1$.
Then $\sym_n$ acts on $J^n$ by place permutations.

\begin{definition}
The {\em quiver Hecke algebra  $R(n)$ of degree $n$
associated with the generalized Cartan matrix $A$ and the matrix $(Q_{ij})_{i,j \in J}$} is the associative algebra
over $\cor$ generated by the elements $\{ e(\nu) \}_{\nu \in  J^n }$, $ \{x_k \}_{1 \le k
\le n}$, $\{ \tau_m \}_{1 \le m \le n-1}$ satisfying the following
defining relations:
\eqn
\begin{aligned}
& e(\nu) e(\nu') = \delta_{\nu, \nu'} e(\nu), \ \
\sum_{\nu \in  J^n } e(\nu) = 1, \\
& x_{k} x_{m} = x_{m} x_{k}, \ \ x_{k} e(\nu) = e(\nu) x_{k}, \\
& \tau_{m} e(\nu) = e(s_{m}(\nu)) \tau_{m}, \ \ \tau_{k} \tau_{m} =
\tau_{m} \tau_{k} \ \ \text{if} \ |k-m|>1, \\
& \tau_{k}^2 e(\nu) = Q_{\nu_{k}, \nu_{k+1}} (x_{k}, x_{k+1})
e(\nu), \\
& (\tau_{k} x_{m} - x_{s_k(m)} \tau_{k}) e(\nu) = \begin{cases}
-e(\nu) \ \ & \text{if} \ m=k, \nu_{k} = \nu_{k+1}, \\
e(\nu) \ \ & \text{if} \ m=k+1, \nu_{k}=\nu_{k+1}, \\
0 \ \ & \text{otherwise},
\end{cases} \\
& (\tau_{k+1} \tau_{k} \tau_{k+1}-\tau_{k} \tau_{k+1} \tau_{k}) e(\nu)\\
& =\begin{cases} \dfrac{Q_{\nu_{k}, \nu_{k+1}}(x_{k},
x_{k+1}) - Q_{\nu_{k}, \nu_{k+1}}(x_{k+2}, x_{k+1})} {x_{k} -
x_{k+2}}e(\nu) \ \ & \text{if} \
\nu_{k} = \nu_{k+2}, \\
0 \ \ & \text{otherwise}.
\end{cases}
\end{aligned}
\eneqn
\end{definition}

The above relations are homogeneous  by assigning
\begin{equation*} \label{eq:Z-grading}
\deg e(\nu) =0, \quad \deg\; x_{k} e(\nu) = (\alpha_{\nu_k}
, \alpha_{\nu_k}), \quad\deg\; \tau_{l} e(\nu) = -
(\alpha_{\nu_l} , \alpha_{\nu_{l+1}}),
\end{equation*}
and hence $R(n)$ has a $\Z$-graded algebra structure.

There is an algebra involution $\psi$ on $R(n)$ given by
\eq \label{eq:auto psi}
e(\nu) \mapsto  e(\overline{\nu}), \quad
x_k \mapsto   x_{n-k+1}, \quad
\tau_{ l} e(\nu) \mapsto
(-1)^{\delta(\nu_l=\nu_{l+1})} \tau_{n- l} e(\overline{\nu}),
\eneq
where $\overline{\nu}$ denotes the reversed sequence $(\nu_n, \nu_{n-1}, \ldots, \nu_2, \nu_1)$.

For $n \in \Z_{\ge 0}$ and $\beta \in \rtl^+$ such that $|\beta| = n$, we set
\eqn
J^{\beta} = \set{\nu = (\nu_1, \ldots, \nu_n) \in J^{n}}
{ \alpha_{\nu_1} + \cdots + \alpha_{\nu_n} = \beta }, \quad e(\beta) =  \sum_{\nu \in J^\beta} e(\nu).
\eneqn
Then $e(\beta)$ is a central idempotent in $R(n)$ and the algebra
$R(\beta) \seteq e(\beta)R(n)e(\beta)$
is called the \emph{\KLR\ at $\beta$}.

 For a graded $R(n)$-module $M=\soplus_{k \in \Z} M_k$, we define
$qM =\soplus_{k \in \Z} (qM)_k$, where
 \begin{align*}
 (qM)_k = M_{k-1} & \ (k \in \Z).
 \end{align*}
We call $q$ the \emph{grading shift functor} on the category of graded $R(n)$-modules.

For $m,n \in \Z_{\ge 0}$, let
\eqn
R(m) \otimes R(n) \to R(m+n)
\eneqn
be the $\cor$-algebra homomorphism given by
$e(\mu) \otimes e(\nu) \mapsto e(\mu * \nu) \ (\mu \in J^m, \nu \in J^n)$,
$x_k\otimes 1 \mapsto x_k \ (1 \le k \le m)$,
$1\otimes x_k \mapsto x_{m+k} \ (1 \le k \le n)$
$\tau_k\otimes 1 \mapsto \tau_k \ (1 \le k <m)$,
$1\otimes \tau_k \mapsto \tau_{m+k} \ (1 \le k <n)$.
Here $\mu * \nu $ is the concatenation of $\mu $ and $\nu$; i.e., $\mu * \nu=(\mu_1,\ldots,\mu_m, \nu_1,\ldots,\nu_n)$.
For an $R(m)$-module $M$ and an $R(n)$-module $N$, the $R(m+n)$-module
\eqn
M \circ N \seteq R(m+n) \otimes_{R(m) \otimes R(n)} \bl M \otimes N\br
\eneqn
is called the \emph{convolution product of $M$ and $N$ }.
 Let us denote by $R(n)\gmod$ the category of finite-dimensional
graded $R(n)$ module, and set
$$R\gmod\seteq \soplus_{n\ge0}R(n)\gmod.$$

It is known that  \KLRs\ \emph{categorify} the negative half of the corresponding quantum group and global bases. More precisely, we have the following theorem.

\begin{theorem}[{\cite{KL09, R08}, \cite{VV09, R11}}] \label{Thm:categorification}
 For a given symmetrizable generalized  Cartan matrix $A$,
let us take a parameter matrix $(Q_{ij})_{i,j \in J}$
satisfying the conditions in \eqref{eq:Q}.
Consider the corresponding quantum group  $U_q(\mathfrak g)$  and the corresponding \KLR\ $R(n)$.
Then there exists an $\A$-algebra isomorphism
\begin{align}
  U^-_\A(\mathfrak g)^{\vee} \isoto \K(R \gmod),
\end{align}
where the multiplication and the $\A$-action on $\K(R\gmod)$ are given
by the convolution product and the grading shift functor, respectively.

Assume further that $A$ is symmetric and   $Q_{i,j}(u,v)$ is a polynomial in $u-v$.
Then under the above isomorphism, the upper global basis corresponds to the set of isomorphism classes  of self-dual simple modules in $\K(R\gmod)$.
\end{theorem}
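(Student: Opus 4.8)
The plan is to derive this result as a known theorem citation rather than reprove it from scratch; the statement is precisely the main categorification result of Khovanov--Lauda \cite{KL09} and Rouquier \cite{R08}, together with the refinement of Varagnolo--Vasserot \cite{VV09} and Rouquier \cite{R11} identifying self-dual simples with the upper global basis in the symmetric case. So strictly speaking no proof is owed. But to indicate the architecture of the argument, I would proceed as follows.

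First, I would construct the $\A$-linear map $U^-_\A(\g)^\vee \to \K(R\gmod)$ (or its inverse) by sending a PBW-type generator, or better a dual canonical basis element $G^\upper(b)$, to the class of the corresponding simple $R(n)$-module. The cleaner route is to first build a map in the other direction: for each $\beta \in \rootl^+$ with $|\beta| = n$ and each finite-dimensional graded $R(\beta)$-module $M$, one extracts its graded character, a sum over $\nu \in J^\beta$ of $\dim_q(e(\nu)M)\,[\nu]$, living in the $\beta$-weight space of the Lusztig algebra $\bigoplus_\nu \A\,[\nu]$; this defines a ring homomorphism $\K(R\gmod) \to U^-_\A(\g)$ once one checks that the convolution product $M \circ N$ has character the product of the characters (a direct computation with the induction functor $R(m+n)\otimes_{R(m)\otimes R(n)}(-)$ and the $\sym_{m+n}/\sym_m\times\sym_n$-coset decomposition). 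One then shows the image lands in the dual integral form $U^-_\A(\g)^\vee$ and that the map is an isomorphism of $\A$-algebras: injectivity follows from the faithfulness of the polynomial representation of $R(n)$ (so nonzero modules have nonzero character), and surjectivity is the content of showing the cuspidal/standard modules, suitably induced, exhaust a basis. The key input making this work is the existence of the family of projective modules whose characters are the PBW basis elements, which requires the explicit combinatorics of Khovanov--Lauda's diagrammatic calculus or Rouquier's relations.

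Second, for the refinement under the symmetry hypothesis on $A$ and the assumption that $Q_{i,j}(u,v)$ is a polynomial in $u-v$: here the geometric input of \cite{VV09} (realizing $R(\beta)$ via equivariant Borel--Moore homology of Lusztig's quiver varieties, so that projective indecomposables correspond to simple perverse sheaves and hence to Lusztig's canonical basis) or Rouquier's algebraic argument in \cite{R11} identifies the classes of self-dual simple modules with $\B^\upper$. One checks that the algebra involution $\psi$ of \eqref{eq:auto psi} induces the bar-type involution on $\K(R\gmod)$ that matches $\db$ under the isomorphism, so that $\psi$-self-dual simples go to $\db$-fixed elements of the integral form; combined with a positivity/triangularity statement (the transition matrix between simples and standards is unitriangular with respect to a suitable order on $B(\infty)$), this forces the bijection with the upper global basis, exactly as $G^\upper(b)$ is characterized by $\db$-invariance plus unitriangularity.

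The main obstacle — the genuinely hard part, and the reason this is a cited theorem rather than an exercise — is the second statement: identifying self-dual simples with the upper global basis. The first (non-graded or ungraded-refined) categorification is comparatively formal once the algebra $R(n)$ and its induction product are in hand, but pinning down the simples requires either Lusztig's geometry of quiver varieties and the decomposition theorem (the \cite{VV09} approach) or a delicate algebraic analysis of the grading and of homs between standard modules (the \cite{R11} approach); both are substantial. In the context of this paper I would simply invoke Theorem \ref{Thm:categorification} as stated with the references \cite{KL09, R08, VV09, R11}, since all later arguments only use its statement.
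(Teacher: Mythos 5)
The paper states this result as a citation from \cite{KL09, R08} and \cite{VV09, R11} without giving a proof, and your proposal correctly recognizes this and handles the statement the same way — as an imported theorem whose proof lives in the references. Your appended architectural sketch of those references is a reasonable high-level account (with one small imprecision: injectivity of the character map on $\K(R\gmod)$ follows from linear independence of characters of nonisomorphic simples, not directly from faithfulness of the polynomial representation of $R(n)$), but it is supplementary, since the paper itself offers nothing to compare it against.
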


\subsection{Quantum affine algebras and the category $\CC_\g$} \label{subec:quantum affine}
In this section, we will review the quantum affine algebras and their finite-dimensional integrable modules.
Hereafter, we take the algebraic closure of $\C(q)$
in $\cup_{m >0}\C((q^{1/m}))$ as a base field $\cor$ for quantum affine algebras.

Let  $A=(a_{ij})_{i,j \in I}$ be a generalized
Cartan matrix of affine type; i.e., $A$ is positive semi-definite of corank 1.
 We choose $0\in I$ as the leftmost vertices in the tables
in \cite[{pages 54, 55}]{Kac} except $A^{(2)}_{2n}$-case  in which
we take the longest simple root as $\al_0$.
Set $I_0=I\setminus\{0\}$.
We take a Cartan datum $(A,P,\Pi,P^{\vee},\Pi^{\vee})$ as follows.

The weight lattice $P$ is given by
\begin{align*}
  P = \Bigl(\soplus_{i\in I}\Z \La_i\Bigr) \oplus \Z \delta
\end{align*}
and the simple roots are given by
$$\al_i=\sum_{j\in I}a_{ji}\La_j+\delta(i=0)\delta.$$
Also, the simple coroots $h_i\in P^\vee=\Hom_\Z (P,\Z)$ are given by
\eqn
&& \lan h_i,\La_j\ran=\delta_{ij},\quad \lan h_i,\delta\ran=0.
\eneqn

Let $\{c_i \}_{i\in I}$, $\{d_i \}_{i\in I}$ be families of relatively prime positive integers
such that
\eq\sum_{i\in I}c_i a_{ij}= \sum_{i\in I} a_{ji} d_i =0
\quad\text{for all $j \in I$}.\label{eq:cd}
\eneq
Set
\eqn
\mathsf s_i \seteq c_i d_i^{-1} \in\Q_{>0} \ (i \in I), \quad c \seteq \sum_{i \in I} c_i h_i \in P^\vee,
\quad \delta \seteq \sum_{i \in I}  d_i \alpha_i \in P.
\eneqn
We call $c$ and $\delta$ by the \emph{canonical central element} and the \emph{null root}, respectively.
Note that we have
$$\lan c, \lambda \ran = (\delta, \lambda) \ \text{for any} \ \lambda \in P.$$
\noindent
Let us denote by $\g$ the affine Kac-Moody Lie algebra associated with
the affine Cartan datum $(A,P, \Pi,P^{\vee},\Pi^{\vee})$.
We denote by $\g_0$ the subalgebra of $\g$ generated by
$e_i, f_i, h_i$ for $i \in I_0$.
Then $\g_0$ is a finite-dimensional simple Lie algebra.

Let us denote by $U_q(\g)$ the quantum group associated with the affine Cartan datum $(A,P, \Pi,P^{\vee},\Pi^{\vee})$.
 We denote by $\uqpg$ the subalgebra of $U_q(\mathfrak{g})$ generated by $e_i,f_i,K_i^{\pm1}(i=0,1,,\ldots,n)$. We call $\uqpg$ the \emph{quantum affine algebra} associated with $(A,P, \Pi,P^{\vee},\Pi^{\vee})$.
Set
$$P_\cl=P/\Z\delta$$
and call it the {\em classical weight lattice}. Let $\cl\col P\to
P_\cl$ be the projection. Then $P_\cl=\soplus_{i\in I}\Z\cl(\La_i)$
and we have
$$P_\cl^\vee\seteq\Hom_{\Z} (P_\cl,\Z)=\set{h\in P^\vee}{\lan h,\delta\ran=0}
=\soplus_{i\in I}\Z h_i.$$ Set $\Pi_\cl = \cl(\Pi)$ and
$\Pi^{\vee}_\cl= \{h_0, \ldots, h_n\}$. Then $U_q'(\mathfrak{g})$
can be regarded as the quantum group associated with the quintuple
$(A,P_\cl, \Pi_\cl,P^{\vee}_\cl,\Pi^{\vee}_\cl)$.

Set $\varpi_i=\gcd(c_0,c_i)^{-1}(c_0\Lambda_i-c_i\Lambda_0) \in P$
for $i=1,2,\ldots,n$.
Then $\{\cl(\varpi_i)\}_{i \in I_0}$ forms a basis of
$P^0_\cl \seteq \set{\la\in P_\cl}{\lan c,\la\ran=0}$.
The Weyl group $W_0$ of $\g_0$ acts on $P^0_\cl$ in a natural way (\cite[\S 1.2]{AK}).

A $\uqpg$-module $V$ is called an {\em integrable module} if
 \bni
\item $V$ has a weight space decomposition
$$V = \bigoplus_{\lambda \in P_\cl} V_\lambda,$$
where  $V_{\lambda}= \set{ u \in V }{
\text{$K_i u =q_i^{\lan h_i , \lambda \ran} u$ for all $i\in I$}}$,
\item  the actions of
 $e_i$ and $f_i$ on $V$ are locally nilpotent for all $i\in I$.
\ee
We denote by $\CC_\g$ the category of finite-dimensional integrable $\uqpg$-modules.

For each $i \in I_0$, there exists a unique  integrable $\uqpg$-module $V(\varpi_i)$
satisfying the following conditions (\cite[\S\;1.3]{AK}):
\begin{enumerate}
\item  the weights of $V(\varpi_i)$ are contained in the convex hull of $W_0 \cl(\varpi_i)$,
\item $\dim V(\varpi_i)_{\cl(\varpi_i)} = 1$,
\item for any $\mu \in W_0 \cl(\varpi_i) \subset P^0_\cl$, we can associate a non-zero vector $u_\mu$ of weight $\mu$ such that
$$u_{ s_j \mu} = \begin{cases}
 f_j^{(\langle h_j, \mu \rangle )} u_\mu &
\text{if $\langle  h_j, \mu \rangle \ge 0$,} \\
 e_j^{(-\langle  h_j, \mu \rangle) } u_\mu &
\text{if $\langle h_j, \mu \rangle \leq 0$}
\end{cases}$$
for any $j\in I$.
\item $V(\varpi_i)$ is generated by $V(\varpi_i)_{\cl(\varpi_i)}$ as a  $\uqpg$-module.
\end{enumerate}
We call $V(\varpi_i)$ the {\em fundamental representation of $\uqpg$ of weight $\varpi_i$}.

Recall that the simple objects in $\CC_\g$ are parameterized by $I_0$-tuples of polynomials $\pi=(\pi_{i} (u) \; ; \; i \in I_0)$,
where $\pi_{i}(u) \in \cor[u]$ and $\pi_{i}(0)=1$ for $i \in I_0$ (\cite{CP94}, \cite{CP98}).
We denote by $\pi_{V,i} (u)$ the polynomials corresponding to a simple module $V$, and call $\pi_{V,i} (u)$  the {\it Drinfeld polynomials} of $V$.
 The Drinfeld polynomials $\pi_{V,i}(u)$ are determined by the eigenvalues
of the simultaneously commuting actions of
some {\it Drinfeld generators} of $\uqpg$ on a
subspace of $V$ (e.g., see \cite{CP94} for more details).

Assume that $\g$ is  an \emph{untwisted} affine Kac-Moody algebra.
Then the  Drinfeld generators depend on the choice of a
function $o \col I_0 \to \{-1,1\}$ such that $o(i)=-o(j)$ for adjacent
vertices $i,j$ in the  Dynkin diagram of $\g_0$.
If we fix such a function $o\col I_0 \to \{-1,1\}$, then by \cite[Proposition 3.1]{Nak042},  we have
\eq \label{eq:Drinfeld}
\pi_{V(\varpi_i),j}(u)= \begin{cases}
  1 & \text{if} \  j \neq i, \\
  1  + o(i)(-1)^{h}q^{-h^\vee} u & \text{if} \ j=i,
\end{cases}
\eneq
where $h\seteq \sum_{i \in  I} d_i$ and $h^\vee\seteq \sum_{i \in I} c_i$ denote  the Coxeter number and the dual Coxeter number, respectively.

A $\uqpg$-module $V \in \CC_\g$  is called \emph{good}, if $V$ admits a \emph{bar involution}, a \emph{crystal basis with simple crystal graph}, and a \emph{global basis}. For the precise definition of good modules see \cite{Kas02}.
For a good module $V$, there exists a non-zero weight vector $v \in V$ such that
$\wt(V) \subset \wt(v) + \sum_{i \in I_0} \Z_{\le 0} \cl (\alpha_i) $.
We call $\wt(v)$ the \emph{dominant extremal weight} and $v$  a \emph{dominant extremal weight vector}, respectively. A dominant extremal weight vector for a good module $V$ is unique up to constant multiple.
For example, the fundamental representations $V(\varpi_i)$ ($i \in I_0$) are good $\uqpg$-modules.

\section{Generalized quantum affine Schur-Weyl duality functors} \label{sec:SWfunctor}

In this section, we recall the functor constructed in \cite{KKK13}, which is a generalization of quantum affine Schur-Weyl duality functor.

\subsection{Generalized quantum affine Schur-Weyl duality functors}
Let $\uqpg$ be a quantum affine algebra over $\cor$ and let $\{V_s\}_{s\in \mathcal{S}}$ be a family of good $\uqpg$-modules.
For each $s \in \Ss$, let $\lambda_s$ be a dominant extremal weight of $V_s$ and let $v_s$ be
a dominant extremal weight vector in $V_s$ of weight $\lambda_s$.

Assume that we have an index set $J$ and two maps
 $X \colon J \rightarrow \cor^\times$,
$s \colon J \rightarrow \Ss$.

For each $i$ and $j$ in $J$, we have a $\uqpg$-module homomorphism
\eqn
\Rnorm_{V_{s(i)}, V_{s(j)}}(z_i,z_j) : (V_{s(i)})_\aff \tens (V_{s(j)})_\aff
\to \cor(z_i,z_j) \tens_{\cor[z^{\pm 1}_i,z^{\pm 1}_j]}(V_{s(j)})_\aff \tens (V_{s(i)})_\aff
\eneqn
which sends $v_{s(i)}\tens v_{s(j)}$ to $v_{s(j)} \tens v_{s(i)}$.
Here $(V_s)_\aff$ denotes the \emph{affinization} of the good module $V_s$, and
$z_i\seteq z_{V_{s(i)}}$ denotes the $\uqpg$-module automorphism on $(V_{s(i)})_\aff$ of weight $\delta$ (see, \cite[\S 2.2]{KKK13}).
We denote by $d_{V_{s(i)},V_{s(j)}}(z_j / z_i)$  the denominator of $\Rnorm_{V_{s(i)}, V_{s(j)}}(z_i,z_j)$,
which is the monic polynomial in $z_j/z_i$ of the smallest degree such that
\eqn
d_{V_{s(i)},V_{s(j)}}(z_j / z_i)\Rnorm_{V_{s(i)}, V_{s(j)}}(z_i,z_j) \bl (V_{s(i)})_\aff \tens (V_{s(j)})_\aff \br
\subset (V_{s(j)})_\aff \tens (V_{s(i)})_\aff.
\eneqn

We define a quiver $\Gamma^J$ associated with the datum $(J, X, s)$
 as follows:
\eq&&
\parbox{70ex}{\be[{(1)}]
\item we take $J$ as the set of vertices,

\item  we put $d_{ij}$ many arrows from $i$ to $j$,
where $d_{ij}$ denotes the order of the zero of
$d_{V_{s(i)},V_{s(j)}}(z_j / z_i)$ at $z_j / z_i = {X(j) / X(i)}$.
\ee}\label{gammaJ}
\eneq

We also define a symmetric Cartan matrix $A^J =(a^J_{ij})_{i,j\in J}$  by
\eq \label{eq:Cartan matrix}
a^J_{ij}=\begin{cases}
2&\text{if $i=j$,}\\
-d_{ij}-d_{ji}&\text{if $i\not=j$.}\end{cases}
\eneq
Note that $(a^J_{ij})_{i,j\in J}$ is the Cartan matrix obtained from the quiver $\Gamma^J$
following the way in \cite[\S 14.1.1]{Lus93} (see also \cite[\S 3.2.4]{R08}).

Set
\begin{equation}
P_{ij}(u,v) =(u-v)^{d_{ij}}.\label{def:Pij}
\end{equation}

Let $R^{J}( n )$  $(n \ge 0)$ be the symmetric \KLRs\  associated with the Cartan matrix $A^J$ and the parameters
\begin{equation}\label{eq:Q_omega}
Q_{ij}(u,v) = \delta(i\not=j)P_{ij}(u,v)P_{ji}(v,u)=\delta(i\not=j) (u-v)^{d_{ij}}(v-u)^{d_{ji}}
\quad (i,j \in J).
\end{equation}

We denote by $R^J(n) \gmod$ the category of graded $R^J(n)$-modules which is
finite-dimensional over $\cor$.

The following theorem is one of the main result of \cite{KKK13}.
\begin{theorem} \label{thm:gQASW duality}
 For each $n \in \Z_{\ge 0}$, there exists a functor
 $$\F_n  \colon R^J(n) \gmod \rightarrow \CC_\g \label{eq:the functor} $$
with the following properties:
\bna
\item
For each $i \in J$, let $S(\alpha_i)$ be the $1$-dimensional simple graded $R^J(1)$-module $\cor u(i)$ with the actions
\eq e(j) u(i)= \delta_{i,j} u(i), & x_1 u(i)=0. \label{eq:simple root module}\eneq
Then we have
\eqn \F_1(S(\alpha_i)) \simeq (V_{s(i)})_{X(i)}, \eneqn
 where $(V_{s(i)})_{X(i)}$ is the evaluation module of $V_{s(i)}$ at $z_i=X(i)$.
\item
Set
$$\F\seteq \soplus_{n \geq 0}\F_n \col\soplus_{n \geq 0} R^J(n) \gmod \rightarrow \CC_\g. $$
Then
$\F$
is a tensor functor.
Namely,
there exist canonical $\uqpg$-module isomorphisms
$\F(R^J(0))\simeq \cor$ and
\begin{equation*}
\F(M_1 \circ M_2) \simeq \F(M_1) \otimes \F(M_2)
\end{equation*}
for $M_1 \in R^J(n_1) \gmod$ and $M_2 \in R^J(n_2)\gmod$
and the diagrams in \cite[A.1.2]{KKK13} are commutative.
 \item
 If the Cartan matrix $(a_{i,j}^J)_{i,j \in J}$ is of type $A_n (n\ge 1), D_n (n \ge 4), E_6, E_7$, or $E_8$, then the functor $\F$ is exact.
 \ee
\end{theorem}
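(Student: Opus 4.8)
The plan is to realize $\F_n$ as tensoring against a single $(\uqpg,\,R^J(n))$-bimodule built out of the affinizations $(V_s)_\aff$, in the spirit of classical Schur--Weyl duality, and then to read off (a)--(c) from the construction. For a sequence $\nu=(\nu_1,\dots,\nu_n)\in J^n$ I would form $V^{\aff}_\nu\seteq(V_{s(\nu_1)})_\aff\otimes\cdots\otimes(V_{s(\nu_n)})_\aff$, a module over both $\uqpg$ and $\cor[z_1^{\pm1},\dots,z_n^{\pm1}]$, and assemble the $V^{\aff}_\nu$ (after completing, or localizing, the spectral parameters at the point $z_k=X(\nu_k)$) into a bimodule $\widehat V^{\otimes n}=\bigoplus_\nu\widehat{V^{\aff}_\nu}$. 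On it one defines the $R^J(n)$-action by letting $e(\nu)$ act as the projection onto the $\nu$-summand, $x_k$ act as multiplication by $z_k-X(\nu_k)$ on the $\nu$-summand (topologically nilpotent after completion), and $\tau_k$ act as the normalized R-matrix $\Rnorm_{V_{s(\nu_k)},V_{s(\nu_{k+1})}}$ on the $(k,k{+}1)$ factors, rescaled by an explicit rational factor so that it preserves the lattice. Verifying the defining relations of $R^J(n)$ is the technical core: the commutation of $\tau_k$ with the $x_m$ is the bilinearity/unitarity of $\Rnorm$; the quadratic relation $\tau_k^2e(\nu)=Q_{\nu_k,\nu_{k+1}}(x_k,x_{k+1})e(\nu)$ forces $Q$ to be the product of denominators in \eqref{eq:Q_omega}, the order $d_{ij}$ entering through the chosen renormalization; and the braid relation for the $\tau_k$'s follows from the Yang--Baxter equation for R-matrices together with careful bookkeeping of the renormalizing factors.

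\textbf{The functor and properties (a), (b).} Set $\F_n(M)\seteq\widehat V^{\otimes n}\otimes_{R^J(n)}M$; since $M$ is finite-dimensional this lands in $\CC_\g$. Property (a) for $n=1$ is then immediate: on $S(\alpha_i)$ the element $x_1$ acts by $0$, so $\widehat V^{\otimes1}\otimes_{R^J(1)}S(\alpha_i)$ is the fibre of $(V_{s(i)})_\aff$ at $z_i=X(i)$, i.e. $(V_{s(i)})_{X(i)}$. For (b), I would match the concatenation homomorphism $R^J(n_1)\otimes R^J(n_2)\to R^J(n_1+n_2)$ with a bimodule isomorphism $\widehat V^{\otimes(n_1+n_2)}\simeq R^J(n_1+n_2)\otimes_{R^J(n_1)\otimes R^J(n_2)}(\widehat V^{\otimes n_1}\boxtimes\widehat V^{\otimes n_2})$ --- here one uses that $\tau_k$ away from the junction does not mix the two blocks --- so that $\F(M_1\circ M_2)\simeq\F(M_1)\otimes\F(M_2)$ follows by associativity of $\otimes$; the coherence diagrams of \cite[A.1.2]{KKK13} then reduce to the coherence of the associativity and commutativity constraints for R-matrices.

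\textbf{Exactness, property (c).} As a tensor functor $\F_n$ is automatically right exact, so the real content is left exactness, equivalently flatness of $\widehat V^{\otimes n}$ as a right $R^J(n)$-module when $A^J$ is of finite simply-laced type. After localizing the spectral parameters to be generic, $R^J(\beta)$ degenerates to a matrix algebra over a polynomial ring and $\widehat V^{\otimes\beta}$ becomes free over it; the task is to control the behaviour at the wall $z_k=X(\nu_k)$. In finite type I would exploit the strong structure of $R^J(\beta)\gmod$ --- each finite-dimensional graded module admits a filtration whose subquotients are governed by the dual PBW/upper global basis of $U^-_\A(\g_0)^\vee$ via Theorem \ref{Thm:categorification} --- to reduce the flatness check to the cuspidal modules, i.e. essentially the fundamental representations, where it follows from the explicitly known pole structure of $\Rnorm$ between fundamental modules; a dimension count comparing $[\F_n(R^J(\beta)e(\nu))]$ with the image of $[R^J(\beta)e(\nu)]$ under the induced homomorphism on Grothendieck rings then closes the argument.

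\textbf{Main obstacle.} I expect the two hard points to be the braid relation in the bimodule construction (the R-matrix Yang--Baxter computation with its renormalizations) and the flatness statement underlying (c). Of these the finite-type exactness is the deepest: it is precisely where one must use the PBW/global-dimension structure of quiver Hecke algebras of finite type, and where invoking Theorem \ref{Thm:categorification} together with the classification of simple modules seems unavoidable.
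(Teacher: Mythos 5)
The paper itself gives no proof of this theorem: it is quoted verbatim from \cite{KKK13} with the remark ``The following theorem is one of the main result of \cite{KKK13}.'' So there is nothing internal to this paper to match your argument against; your proposal is really a reconstruction of the proof in the earlier KKK paper.

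As such, the shape of your argument for the construction and for parts (a) and (b) is essentially the one used there: one builds a completed $(\uqpg,\,\widehat R^J(n))$-bimodule out of the affinizations $(V_{s(\nu_k)})_\aff$, lets $x_k$ act (up to normalization) by $z_k/X(\nu_k)-1$, lets $\tau_k$ act by a renormalized normalized R-matrix, verifies the quiver Hecke relations from unitarity and the Yang--Baxter equation, and then sets $\F_n(M)=\widehat V^{\otimes n}\otimes_{\widehat R^J(n)}M$. The quadratic relation pinning down $Q_{ij}$ via the denominator $d_{V_{s(i)},V_{s(j)}}$ and the reduction of (b) to compatibility of the concatenation embedding with the bimodule structure are both correct in outline.

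Your sketch of (c), however, contains a genuine circularity. You propose to establish flatness of the bimodule by ``exploiting the strong structure of $R^J(\beta)\gmod$'' via Theorem \ref{Thm:categorification}, reducing to ``the cuspidal modules, i.e.\ essentially the fundamental representations.'' But the identification of cuspidal (dual PBW / root) modules with fundamental evaluation modules under $\F$ is precisely Theorem \ref{thm:dual pbw generators} of the present paper, and its proof {\em uses} the exactness of $\F$ as an input (it applies $\F$ to the short exact sequence \eqref{eq:minimal pair}). So you cannot use that correspondence to {\em prove} exactness. The actual exactness argument in \cite{KKK13} does not pass through the categorification theorem: it is a direct flatness statement for the completed bimodule, proven by localizing the spectral parameters, controlling the zeros of the R-matrix denominators around the point $z_k=X(\nu_k)$, and using structural properties of finite-type quiver Hecke algebras (finite global dimension / suitable filtrations) rather than the identification of their simple modules. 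You would need to replace the appeal to Theorem \ref{Thm:categorification} and the cuspidal/fundamental dictionary with such a self-contained flatness argument.
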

Note that $\F(qM) \simeq \F(M)$ for $M \in R^J(n) \gmod$.

\section{The category $\CC_Q$}

{}From now on, we assume that $\g$ is
a simply-laced affine Kac-Moody algebra;
 i.e., $\g$ is of type  $A^{(1)}_n \ (n \ge 1)$, $D^{(1)}_n \ (n \ge 4)$,
$E^{(1)}_6$, $E^{(1)}_7$, or $E^{(1)}_8$.
 We denote by $\g_0$ the  associated  finite-dimensional
semisimple Lie subalgebra of $\g$.
After reviewing the definition of the category $\CC_Q$
introduced in \cite{HL11},
we construct a \KLR\ $R^J$ associated with a set $J$ consisting of some simple modules in $\CC_Q$.
 Here $Q$ is a quiver whose underlying graph
is the Dynkin diagram of $\g_0$.
It turns out that
the underlying graph of the quiver $\Gamma^J$ coincides with the Dynkin diagram of $\g_0$ and
the functor $\F$ constructed  in \cite{KKK13} is  an exact functor
from $R^J\gmod$ to $\CC_Q$.
Moreover, the ring homomorphism induced by the functor $\F$ coincides with the homomorphism introduced in \cite[Theorem 1.2]{HL11}.

\subsection{Repetition quiver $\widehat Q$} \label{subsec:Qhat}
Let $I=\{ 0,1,\ldots, n \}$ and $I_0=\{ 1,\ldots, n\}$ be the index sets
of the simple roots of $\g$ and $\g_0$, respectively.
We denote by $W_0$ the Weyl group of $\g_0$ and by $w_0 \in W_0$ the longest element in $W_0$.
 We denote by $\Delta$,
$\Delta_+$ and $\Pi_0$ the set of roots, the one of
positive roots and the one of simple roots of $\g_0$, respectively.
 The standard invariant bilinear form of $\g_0$ on $\h_0^*$, the dual of the Cartan subalgebra of $\g_0$, is denoted by $( \ , \ )_0$; i.e., we have $(\al_i,\al_j)_0 = a_{i,j}$ for $i,j \in I_0$, where $(a_{i,j})_{i,j \in I_0}$ is the Cartan matrix of $\g_0$.

 Let $Q$ be a quiver whose underlying graph is the Dynkin diagram of $\g_0$.
 For $i \in I_0$, we denote by $s_i(Q)$ the quiver obtained from $Q$
by changing the orientation of every arrow with source $i$
or target $i$. We have $s_is_jQ=s_js_iQ$ for any $i,j\in I_0$.
A function $\xi \col I_0 \to \mathbb Z$ is called a {\it height function on $Q$} if
$\xi_j = \xi_i -1$ for $i \rightarrow j \in Q_1$, where $Q_1$ is the set of arrows of $Q$.
Since $Q$ is connected, any two height functions on $Q$ differ by a constant.
We fix such a function $\xi$.

Set
\begin{align*}
\widehat I_0 = \{(i,p) \in I_0 \times \mathbb Z \, ; \, p-\xi_i \in 2 \mathbb Z\}.
\end{align*}
The {\it repetition quiver $\widehat Q$ of $Q$} is the quiver with  $\widehat I_0$ as the set of vertices
 and its  arrows  are
\begin{align*}
(i,p) \rightarrow (j, p+1), \ (j,q) \rightarrow (i, q+1)
\end{align*}
for all $i \rightarrow j \in Q_1$ and for all $p, q$ such that $p-\xi_i, q-\xi_j \in 2 \mathbb Z$.
Note that $\widehat Q$ does not depend on the choice of orientation of $Q$.

A reduced expression $w=s_{i_1} s_{i_2} \cdots s_{i_\ell}$  of  an element $w$
in the Weyl group $W_0$ is called {\it adapted to $Q$}
if  $i_k$ is a source of $s_{i_{k-1}}  \cdots s_{i_2} s_{i_1} (Q)$
for $1 \le k  \le \ell $.
It is known that for each orientation $Q$, there exists a unique Coxeter element (a product of all simple reflections) $\tau \in W_0$ which is adapted  to  $Q$.

Set $\widehat \Delta \seteq \Delta_+ \times \mathbb Z$.
For $i \in I_0$, we define $\gamma_i = \sum_{j \in B(i)} \alpha_j,$
where $B(i)$ denotes the set of vertices $j$  in $Q$
such that there exists a path from $j$ to $i$.
We define a bijection $\phi \col \widehat I_0 \rightarrow \widehat \Delta$ inductively as follows.
  \eq &&
  \parbox{70ex}{
  \begin{enumerate}
    \item $\phi (i, \xi_i) = (\gamma_i, 0)$,
    \item if $\phi(i,p) = (\beta, m)$, then we define
    \be[{$\scbul$}]
      \item $\phi(i,p-2)=(\tau(\beta), m)$ if $\tau(\beta) \in \Delta_+$,
      \item $\phi(i,p-2)=(-\tau(\beta), m-1)$ if $\tau(\beta) \in \Delta_-$,
      \item $\phi(i,p+2)=(\tau^{-1}(\beta), m)$ if $\tau^{-1}(\beta) \in \Delta_+$,
      \item $\phi(i,p+2)=(-\tau^{-1}(\beta), m+1)$ if $\tau^{-1}(\beta) \in \Delta_-$.
\ee
  \end{enumerate}} \label{eq:phi}
\eneq

\subsection{Auslander-Reiten quiver $\Gamma_Q$}
 Let $\C Q \smod$ be the category of representations of the quiver $Q$ over $\C$ and let $\underline{\dim}(X)$
 denote the dimension vector of a representation $X$ of $Q$.
Then the full subquiver with vertices $\phi^{-1}(\Delta_+\times \{0\})$ in $\widehat Q$ is isomorphic to the {\em Auslander-Reiten quiver} $\Gamma_Q$ of $\C Q \smod$ and the vertex $\phi^{-1}(\beta,0)$ corresponds to the isomorphism class of  $M_Q(\beta)$, the indecomposable representation whose dimension vector is $\beta$ (\cite[\S 2.3]{HL11}).
In particular, $\gamma_i$ is the dimension vector of the injective envelope $I_Q(i)$ of the simple representation $S_Q(i)$ supported on vertex $i$.
 The dimension vector of the projective cover $P_Q(i)$ of $S_Q(i)$ is $\tau^{m_{i^{*}}}(\gamma_{i^{*}})$,
 where $^*$ is the involution of $I_0$ defined by $w_0{\alpha_i}=-\alpha_{i^{*}}$ and
 $m_{i} =\max\set{k \geq0}{\tau^k (\gamma_i) \in \Delta_+}$.

The arrows in $\Gamma_Q$ represent homomorphisms in a special class, so called \emph{irreducible morphisms}, and any non-isomorphism between indecomposable modules in $\C Q \smod$ is a sum of compositions of irreducible morphisms (\cite[Corollary IV 5.6]{ASS}).
It is known that $\Gamma_Q$ has no cycles and has no multiple arrows.
The subquiver of $\Gamma_Q$ with vertices $\{I_Q(i) \, ; \, i \in I_0\}$ is isomorphic to $Q^{{\rm rev}}$, the quiver obtained by reversing all the arrows of $Q$.
The isomorphism is given by $I_Q(i) \mapsto i$.
The subquiver of $\Gamma_Q$ with vertices $\{P_Q(i) \, ; \, i \in I_0\}$
is also isomorphic to $Q^{{\rm rev}}$ by mapping $ P_Q(i) \mapsto i$
(\cite[Proposition 6.4]{Gab}).
 If $\tau \beta \in  \Delta_+$ for $\beta \in \Delta_+$, then we set
$\tau M_Q(\beta) \seteq M_Q(\tau \beta)$. This map $\tau$ on $\Gamma_Q$ is called the {\it Auslander-Reiten translation}.

 The repetition quiver $\widehat Q$ itself is isomorphic to the Auslander-Reiten quiver
 of the category $D^b (\C Q \smod)$, the bounded derived category of $\C Q \smod$ (\cite{Ha}).
The isomorphism sends the vertex $\phi^{-1}(\beta, m)$
to the isomorphism class of the complex $M(\beta)[m]$, which is concentrated in degree $-m$.

\smallskip

We have the following description of the Auslander-Reiten quiver inside the repetition quiver $\widehat Q$:
\eq \label{eq:description GammaQ}
\phi^{-1}(\Delta_+ \times \{0\})
=\set{(i,p) \in \widehat I_0}{\xi_{i} -2m_{i} \le p \le \xi_i}.
\eneq
There exists a bijective map $\nu \col\widehat Q \to \widehat Q$,
so called {\it Nakayama permutation},
given by
\eqn \nu(i,p)=(i^*,p+h-2) \ \text{for} \  (i,p) \in \widehat I_0, \eneqn
 where $h=\sum_{i\in I}d_i$  denotes the Coxeter number of $\g_0$; i.e., $h=n+1$ for $A_n$,  $2n-2$ for $D_n$, $12$ for $E_6$, $18$ for $E_7$, and $30$ for $E_8$.
See \cite[\S\, 6.5]{Gab} and \cite[(3.4)]{KT}  with attention to the different labeling
of vertices of $\widehat Q$ from ours.
Since $\nu(\phi^{-1}(\underline \dim P(i^*),0)) = \phi^{-1}(\underline \dim I(i), 0)$ (\cite[Proposition 6.5]{Gab}),
 we obtain
\eq \label{eq:nakayama permutation}
\xi_{i^*}-2m_{i^*} = \xi_{i}-h+2 \quad\text{for $i \in I_0$.}
\eneq

\Lemma\label{lem:range}
We have
\eq&&
(i,\xi_j-d(i,j)),\ (i,\xi_j-2m_j+d(i,j))\in\phi^{-1}(\Delta_+ \times \{0\})
\label{eq:range}
\eneq
for any $i,j\in I_0$.
Here, $d(i,j)$ denotes the distance between $i$ and $j$ in the Dynkin diagram of $\g_0$,
\enlemma
\Proof
By \eqref{eq:description GammaQ} it is enough to show that
\eqn &&
\text{$\xi_{i}-2m_{i}=\xi_{i^*}-h+2  \le   \xi_{j} - d(i,j) \le \xi_i$, and}\\
&&\xi_{i}-2m_{i}=\xi_{i^*}-h+2  \le   \xi_{j}-2m_j+d(i,j)=\xi_{j^*}-h+2+d(i,j)
 \le \xi_i.
\eneqn
It follows from
$\xi_{i^*}-\xi_{j}+d(i,j)\le d(i^*,j)+d(i,j) \le h-2$ for any $i, j \in I_0$.
The last inequality $d(i^*,j)+d(i,j) \le h-2$
can be easily  verified in each type.
Indeed, if $\g_0$ is type $A_n$, then $d(i,j)+d(i^*,j)=d(i,i^*)$
or $d(j,j^*)$ which is  at most $n-1=h-2$.
If $\g_0$ is of the other type, then  $d(i,j)\le (h-2)/2$ for any $i,j\in I_0$.
\QED

For a reduced expression $s_{i_1}s_{i_2} \cdots s_{i_r}$ of $w_0$,
set $\beta_j \seteq s_{i_1}\cdots s_{i_{j-1}}(\alpha_{i_j})$ for $1 \le j \le r$.
Then we have $\{\beta_1, \ldots, \beta_{r} \} = \Delta_+$.
Moreover, if $1 \le k < \ell \le r$ and $\beta_k + \beta_\ell = \beta_j$ for some $1 \le j \le r$,
then we have $k < j < \ell$ (\cite{Papi94}).
We will use the following lemma later.
\Lemma \label{lem:p_k > p_l}
Assume that the reduced expression
$s_{i_1} s_{i_2} \cdots s_{i_r}$ of $w_0$  is adapted to $Q$.
If $k < \ell$ and $\beta_k+\beta_\ell \in \Delta_+$,  then there is a path from $\phi^{-1}(\beta_\ell,0)$ to $\phi^{-1}(\beta_k,0)$ in $\phi^{-1}(\Delta_+\times \{0\})$.
In particular, we have $p_k > p_\ell$.
Here $\phi(i_k,p_k)=(\beta_k,0)$ and $\phi(i_l,p_l)=(\beta_l,0)$.
\enlemma
\Proof
In this proof, we set $M(\beta) \seteq M_Q(\beta)$.
It is enough to show that there exists a path form $M(\beta_\ell)$ to $M(\beta_k)$ in $\Gamma_Q$.
By \cite[Proposition 4.12]{Lus90}, we have $\Hom_{\C Q}(M(\beta_i), M(\beta_j))=0$ whenever $1 \le i < j \le r$.

Note that $\Ext^1_{\C Q}(M(\beta_\ell), M(\beta_k))=0$. Indeed, if there is a non-split short exact sequence
$0\to M(\beta_k) \to M \to M(\beta_\ell) \to 0$ for some $M$, then we have
$\Hom_Q(M(\beta_k),M(\beta_p)) \neq 0$ and $\Hom_Q(M(\beta_p),M(\beta_\ell)) \neq 0$ for any indecomposable summand $M(\beta_p)$ of $M$.
It follows that $\ell \le p \le k$, which is a contradiction.

Recall that there is a bilinear form $\big< \ , \ \big>$ on the root lattice of $\g_0$ such that
$$\big( \underline{\dim}(X),\underline{\dim} (Y) \big)_0
= \big< \underline{\dim}(X),\underline{\dim} (Y) \big>
+ \big< \underline{\dim}(Y),\underline{\dim} (X) \big> \ \text{and}, $$
$$\big< \underline{\dim}(X),\underline{\dim} (Y) \big>
= \dim \Hom_{\C Q}(X, Y) -\dim \Ext^1_{\C Q}(X, Y)$$
for $X, Y \in \C Q \smod$.

Hence we have
\eqn
-1 = (\beta_\ell, \beta_k) &= \dim \Hom_{\C Q}(M(\beta_\ell), M(\beta_k)) -\dim \Ext^1_{\C Q}(M(\beta_\ell), M(\beta_k)) \\
& \quad +\dim \Hom_{\C Q}(M(\beta_k), M(\beta_\ell)) -\dim \Ext^1_{\C Q}(M(\beta_k), M(\beta_\ell)) \\
&= \dim \Hom_{\C Q}(M(\beta_\ell), M(\beta_k))-\dim \Ext^1_{\C Q}(M(\beta_k), M(\beta_\ell)),
\eneqn
so that
\eq \label{eq:extension} \dim \Ext^1_{\C Q}(M(\beta_k), M(\beta_\ell)) > 0.\eneq
In particular, $M(\beta_k)$ is not projective and hence $\tau(\beta_k) \in \Delta_+$.
By \cite[Corollary IV. 2.14]{ASS}, we have
\eqn
\dim \Hom_{\C Q}(M(\beta_\ell), M(\tau(\beta_k))) = \dim \Ext^1_{\C Q}(M(\beta_k), M(\beta_\ell))  > 0.
\eneqn

If $\Hom_{\C Q}(M(\beta_p),M(\beta_q)) \neq 0$ for $1 \le p \neq q \le r$,
 then we have a path from $M(\beta_p)$ to
$M(\beta_q)$ in  the Auslander-Reiten quiver $\Gamma_Q$,
because the arrows in $\Gamma_Q$ represent irreducible morphisms and any non-zero non-isomorphism between indecomposable modules in $\C Q \smod$ is a sum of compositions of irreducible morphisms.

Hence there exists a path from $M(\beta_\ell)$ to $M(\tau(\beta_k))$ in  $\Gamma_Q$.
Since there always exists a path (of length 2) from $M(\tau(\beta_k))$ to $M(\beta_k)$, we conclude that there is a path from $M(\beta_\ell)$ to $M(\beta_k)$ in $\Gamma_Q$.
\QED

Recall that the dimension vector is an {\it additive function} on $\Gamma_Q$  (\cite{Gab}); i.e.,
for each vertices $X \in \Gamma_Q$ such that $\tau(\underline \dim X) \in \Delta_+$, we have
\eq \label{eq:additive}
\underline{\dim} X +   \underline{\dim} (\tau X)
=\sum_{Z \in X^-} \underline{\dim} Z,
\eneq
where $X^-$ denotes the set of vertices $Z$ in $\Gamma_Q$ such that there exists an arrow from $Z$ to $X$.
For $Y \in \Gamma_Q$ let $Y^+$ denote the set of vertices $Z$ in $\Gamma_Q$ such that there exists an arrow from $Y$ to $Z$. Then we have $X^-=(\tau X)^+$ whenever $X$ is not projective (for example, see \cite[\S IV. 4]{ASS}).

The proof of following lemma is due to Bernard Leclerc.

\begin{lemma} \label{lem:boundary}
\bnum
\item
For each $k \in I_0$, the element $\phi^{-1}(\alpha_k,0)$ lies in the following subset of $\widehat I_0$:
  \begin{align}
    \partial \phi^{-1}(\Delta_+ \times \{0\}) \seteq&\{(i,\xi_i) \, ; \, i \in I_0\} \cup
    \{(i, \xi_i-2m_i)\, ; \, i \in I_0\} \\
\nonumber &\cup \{(i,\xi_i-2s) \, ; \, \text{$i$ is an extremal vertex of $I_0$}, \ 0 \le s \le m_i \},
  \end{align}
  where $m_i \seteq \max\{ s \geq 1 \, ; \, \tau^s (\gamma_i) \in \Delta_+ \}$.
\item
If $i\in I_0$ is not an extremal vertex and
$(i,p)\in\phi^{-1}(\Pi_0\times\{0\})$, then
$p=\xi_i$ or $p=\xi_{i}-2m_{i}$.
\ee
\end{lemma}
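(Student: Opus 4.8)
The plan is to deduce (i) from (ii), and then to prove (ii) by locating the simple module $S_Q(k)$ inside the Auslander--Reiten quiver $\Gamma_Q\cong\phi^{-1}(\Delta_+\times\{0\})$ and playing the mesh (additive function) relations off against the combinatorics of the Dynkin diagram. That (i) follows from (ii) is formal: writing $\phi^{-1}(\alpha_k,0)=(i,p)$, we have $p=\xi_i-2s$ with $0\le s\le m_i$ by \eqref{eq:description GammaQ}; if $i$ is extremal, $(i,p)$ already lies in the third set defining $\partial\phi^{-1}(\Delta_+\times\{0\})$, while if $i$ is not extremal then (ii) forces $p=\xi_i$ or $p=\xi_i-2m_i$. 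For (ii) I would argue by contradiction: assume $\phi^{-1}(\alpha_k,0)=(i,p)$ with $i$ not extremal and $p=\xi_i-2s$, $0<s<m_i$. Under the identification $\phi^{-1}(\beta,0)\leftrightarrow M_Q(\beta)$ this vertex corresponds to $\tau^s I_Q(i)=S_Q(k)$, which is then neither injective (since $s>0$) nor projective (since $s<m_i$); as $S_Q(k)$ is injective exactly when $k$ is a source of $Q$ and projective exactly when $k$ is a sink, the vertex $k$ is neither a source nor a sink. Set $d$ equal to the number of neighbours of $i$ in the Dynkin diagram, so $d\ge 2$.

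The preliminary step is to show that $(i,p)$ in fact has all $d$ of its predecessors and all $d$ of its successors inside $\Gamma_Q$. For a neighbour $j$ of $i$ one has $\xi_j=\xi_i\pm1$, and rewriting \eqref{eq:nakayama permutation} as $2m_i=\xi_i-\xi_{i^*}+h-2$ one reads off $m_j\ge m_i$ when $j\to i$ in $Q$ and $m_j\ge m_i-1$ when $i\to j$; substituting into the defining inequalities $\xi_j-2m_j\le p\pm1\le\xi_j$ and using $1\le s\le m_i-1$ shows $(j,p\pm1)\in\Gamma_Q$ for every neighbour $j$. Since $\Gamma_Q$ has no multiple arrows, the predecessors of $S_Q(k)$ are precisely the $(j,p-1)$ and the successors precisely the $(j,p+1)$, with $j$ running over the neighbours of $i$. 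Moreover every predecessor $N$ carries a nonzero, hence surjective (as $S_Q(k)$ is simple) irreducible map $N\to S_Q(k)$, so $S_Q(k)$ is a quotient of $N$ and $(\underline{\dim}(N))_k\ge1$; dually every successor $N'$ admits an injective map $S_Q(k)\hookrightarrow N'$, so $(\underline{\dim}(N'))_k\ge1$.

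Now I would invoke the mesh relations \eqref{eq:additive}, which hold at the non-projective vertices $(i,p)$ and $(i,p+2)=\tau^{-1}S_Q(k)$ and read
\[
\alpha_k+\underline{\dim}(\tau S_Q(k))=\sum_{j}\underline{\dim}(j,p-1),\qquad
\alpha_k+\underline{\dim}(\tau^{-1}S_Q(k))=\sum_{j}\underline{\dim}(j,p+1).
\]
Comparing $k$-th coordinates and using the lower bounds just obtained gives $1+(\underline{\dim}(\tau S_Q(k)))_k\ge d$ and $1+(\underline{\dim}(\tau^{-1}S_Q(k)))_k\ge d$. On the other hand, computing $\tau S_Q(k)=D\,\mathrm{Tr}\,S_Q(k)$ from the minimal projective presentation $\bigoplus_{k\to\ell}P_Q(\ell)\to P_Q(k)\to S_Q(k)\to 0$ gives $\underline{\dim}(\tau S_Q(k))=\sum_{k\to\ell}\gamma_\ell-\gamma_k$, whence $(\underline{\dim}(\tau S_Q(k)))_k=r-1$, where $r$ is the number of arrows of $Q$ out of $k$ (here $[\gamma_\ell]_k=1$ for an out-neighbour $\ell$ of $k$ and $[\gamma_k]_k=1$); dually $(\underline{\dim}(\tau^{-1}S_Q(k)))_k=q-1$ with $q$ the number of arrows into $k$. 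Combining, $r\ge d\ge2$ and $q\ge d\ge2$, so $k$ has $r+q\ge4$ neighbours in the Dynkin diagram of $\g_0$, which is impossible in types $A$, $D$, $E$. This contradiction forces $s\in\{0,m_i\}$, i.e. $p=\xi_i$ or $p=\xi_i-2m_i$, which is (ii).

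The step I expect to be the real obstacle is assembling the last paragraph correctly: the two coordinate computations $(\underline{\dim}(\tau^{\pm1}S_Q(k)))_k=r-1,\ q-1$ have to be matched against the claim that $(i,p)$ really sees all $d$ of its Dynkin neighbours inside $\Gamma_Q$, and it is precisely at this latter point that \eqref{eq:nakayama permutation} is indispensable --- without the inequalities $m_j\ge m_i$ (resp.\ $m_j\ge m_i-1$) the mesh sums could have fewer than $d$ terms and the estimate collapses. Once both ingredients are secured, the numerology $r+q\ge4$ closes the argument at once, and the case $i=k$ is covered automatically. The remaining ingredients --- nonvanishing of irreducible maps, the absence of multiple arrows in $\Gamma_Q$, the description of $\mathrm{rad}\,P_Q(k)$, and the formula for $D\,\mathrm{Tr}$ of a simple module --- are standard Auslander--Reiten theory, consistent with the facts already recalled for $\Gamma_Q$ above.
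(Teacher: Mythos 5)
Your proof is correct, and it takes a genuinely different route from the paper's. The paper's proof is constructive: it handles sources and sinks first, then for $k$ neither, chooses an extremal vertex $i$ and a neighbour $k'$ of $k$ and explicitly builds an $A$-type subquiver of $\Gamma_Q$ (using Lemma \ref{lem:range} for containment), then propagates dimension vectors along it by the mesh relation until it reads off $\underline{\dim}=\alpha_k$, thereby locating $\phi^{-1}(\alpha_k,0)$ on a boundary wall with explicit coordinates; part (ii) is then read off from the shape of that answer. Your argument instead proves (ii) directly by contradiction: after showing via \eqref{eq:nakayama permutation} that $m_j\ge m_i$ (resp.\ $m_j\ge m_i-1$) for $j\to i$ (resp.\ $i\to j$), you conclude the mesh at $(i,p)$ and at $(i,p+2)$ is ``full'' (all $d=\deg(i)$ predecessors and successors lie in $\Gamma_Q$); since every neighbour maps onto or receives the simple $S_Q(k)$ its dimension vector has $\ge 1$ in slot $k$, and comparing with $\bigl(\underline{\dim}\,\tau^{\pm1}S_Q(k)\bigr)_k=r-1,\,q-1$ (via $\underline{\dim}\,\tau S_Q(k)=\sum_{k\to\ell}\gamma_\ell-\gamma_k$ and its dual, valid since $\nu S_Q(k)=0$ as $S_Q(k)$ is indecomposable non-projective over a hereditary algebra) yields $r,q\ge d\ge2$, hence $\deg(k)=r+q\ge4$, impossible in $ADE$. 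What you gain is a short, uniform argument for (ii) with no case division on the position of $k$ and no explicit subquiver construction; what you give up is the exact location $\phi^{-1}(\alpha_k,0)=(i,\xi_k-t)$ etc.\ that the paper's computation produces as a by-product, though the lemma as stated does not need it. Both proofs ultimately hinge on the Nakayama-permutation constraint \eqref{eq:nakayama permutation} (you use it to bound $m_j$ against $m_i$; the paper uses it inside Lemma \ref{lem:range}), so the two arguments are genuinely different organizations of the same basic input.
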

\begin{proof}
(i) If $k$ is a source of $Q$, then $\gamma_k = \alpha_k$ and
  $$\phi^{-1}(\alpha_k,0) = (k,\xi_k).$$

 If $k$ is a sink of $Q$, then $S_Q(k) \simeq P_Q(k) $.
  Hence we have $$\phi^{-1}(\alpha_k,0) = \phi^{-1}(\tau^{m_{k^*}}(\gamma_{k^*}),0)
   = (k^{*},\xi_{k^{*}}-2 m_{k^{*}}).$$
 Hence we may assume that the vertex $k$ is neither a source nor a sink.
We take a subquiver of type $A$ as follows:

\noi
Assume that $k$ has two neighbors. Then take an extremal vertex $i$ in $Q$ such that  between $k$ and $i$ there is no vertex with three neighbors.
Take the connected full subquiver of $Q$ whose extremal vertices are $i$ and $k$.
Let $k'$ be the only neighbor of $k$ in the subquiver.
Note that if $\g_0$ is of type $A$, then there are two choices for such subquivers.

Assume that $k$ has three neighbors. Then either $k$ has one incoming arrow and two outgoing arrows
or has two incoming arrows and one outgoing arrow.
Let $k'$ be a unique neighbor of $k$ such that there exists an arrow
$k'\to k$ (resp.\ $k\to k'$) in the first (resp.\ second) case.
Let $i$ be the extremal vertex of $Q$ such that $k'$ lies between $k$ and $i$.
Take the connected full subquiver of $Q$
whose extremal vertices are $i$ and $k$.

Thus we have chosen $i$ and $k'$ satisfying the following conditions:
\bnum
\item $i$ is an extremal vertex,
\item there is no vertex
with three neighbors between $k$ and $i$,
\item $k'$ is a neighbor of $k$ between $i$ and $k$,
\item one of the following two conditions holds:
\be[{\bf (a)}]
\item $k'$ is a unique vertex such that
there exists an arrow $k' \rightarrow k$.
\item
$k'$ is a unique vertex such that
there exists an arrow $k \rightarrow k'$.
\ee\ee

For the  case  {\bf (a)},
the following subquiver is contained in $\Gamma_Q$
by Lemma~\ref{lem:range}.
$$\scalebox{0.7}{\xymatrix{
I_Q(i)&&&I_Q(k')& \\
&&M_1 \ar[ur] &&I_Q(k) \ar[ul] \\
&\cdots \ar[ur]&  &N_1 \ar[ur] \ar[ul] & \\
M_{t-1} \ar[ur] \ar@{--}[uuu]\ar@{--}[dd]  &  &\cdots \ar[ur]&& \\
&N_{t-1} \ar[ur] \ar[ul] \ar[ur] \ar[ul]&&&\\
N_t, \ar[ur] &&&&\\
}}$$
for some indecomposable representations $M_1,\ldots, M_{t-1}, N_1 \ldots, N_t$ such that
$M_{t-1}$ and $N_{t}$ are in the $\tau$-orbit of $I_Q(i)$.
Here the vertex $\tau X$ is illustrated immediately below $X$.

Set $M_0 =I_Q(k')$ and $N_0=I_Q(k)$.
 Let us  calculate $\underline{\dim} N_t$ by \eqref{eq:additive}.
We have
$\tau(M_s)=N_{s+1} \ (0 \le s \le t-1)$, $(M_s)^-=\{M_{s+1}, N_s \} \ (0 \le s \le t-2)$
and $(M_{t-1})^-=\{N_{t-1}\}$.
By \eqref{eq:additive}, we obtain
\begin{align*}
\underline{\dim} N_t
&= \underline{\dim} N_{t-1} -\underline{\dim} M_{t-1} =
 \cdots = \underline{\dim} N_1 -  \underline{\dim} M_1 \\
 &= \underline{\dim} I_Q(k) -  \underline{\dim} I_Q(k')
 = \gamma_k - \gamma_{k'}=\alpha_k.
\end{align*}
The last equality follows from the fact that
$k'$ is a unique vertex such that there exists an arrow $k'\to k$.

Hence we have
$N_t=S_Q(k)$ and
$$\phi^{-1}(\alpha_k,0)=(i,\xi_k-t),$$
where $t$ denotes the number of edges between $k$ and $i$.

 In the  case  {\bf (b)},  we have
the following subquiver contained in $\Gamma_Q$
by Lemma~\ref{lem:range}:
$$\scalebox{0.7}{\xymatrix{
N_u\ar@{--}[dd] &&&& \\
&N_{u-1} \ar[ul]&&& \\
M_{u-1} \ar[ur] \ar@{--}[ddd] &&\cdots \ar[ul]&& \\
&\cdots  \ar[ul]  & &N_1 \ar[ul]& \\
&& M_1 \ar[ur] \ar[ul] && P_Q(k)  \ar[ul] \\
P_Q(i)&&& P_Q(k') \ar[ul] \ar[ur] &\\
}}$$
for some indecomposable representations $M_1,\ldots, M_{u-1}, N_1 \ldots, N_u$ such that
$M_{u-1}$ and $N_{u}$ are in the $\tau$-orbit of $P_Q(i)$.
Set $M_0=P_Q(k')$ and $N_0=P_Q(k)$.
Then we have
$\tau(N_u)=M_{s-1} \ (1 \le s \le u)$, $(N_s)^-=\{N_{s-1}, M_s \} \ (1 \le s \le u-1)$
and $N_u^-=\{N_{u-1}\}$.
By \eqref{eq:additive}, we obtain
\begin{align*}
\underline{\dim} N_{u} &= \underline{\dim} N_{u-1} -\underline{\dim} M_{u-1} =
 \cdots = \underline{\dim} N_1 -  \underline{\dim} M_1 \\
 &= \underline{\dim} P_Q(k) -  \underline{\dim} P_Q(k') =\alpha_k.
 \end{align*}
 The last equation comes from the fact that
 ${\underline \dim} P_Q(k)=\sum_{j \in C(k)} \alpha_j$, where $C(k)$ is the set of vertices  $j$ in $Q$ such that there exists a path from $k$ to $j$.
Hence we have
$S_Q(k)= N_u$ and
$$\phi^{-1}(\alpha_k,0)=(i^{*},\xi_{k^{*}}-2 m_{k^*}+u), $$ where $u$ denotes the number of edges between $k$ and $i$.

\smallskip
\noi
 (ii) is immediate from the arguments above.
\end{proof}

We use the results in the appendix in the course of
the proof of the following lemma.
\begin{lemma} \label{lem:simple poles boundary}
  Let $\g$ be an affine Kac-Moody algebra of type $A^{(1)}_n \ (n \ge 1)$ or $D^{(1)}_n \ (n \ge 4)$.
    For  a Dynkin quiver $Q$ of type $\g_0$, let $\phi \col \widehat I_0 \rightarrow \widehat \Delta$ be the function defined in {\rm \S\,\ref{subsec:Qhat}}.
  If $(i,p)$ and $(j,r)$ are vertices in $ \phi^{-1}(\Pi_0 \times \{0\})$,
then the  normalized R-matrices
$\Rnorm_{V(\varpi_i),V(\varpi_j)}(z_i,z_j)$
has at most a simple pole at  $z_j /z_i = (-q)^{r-p}$.
\end{lemma}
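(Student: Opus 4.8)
The order of the pole of $\Rnorm_{V(\varpi_i),V(\varpi_j)}(z_i,z_j)$ at $z_j/z_i=(-q)^{r-p}$ equals the order of the zero of the denominator $d_{V(\varpi_i),V(\varpi_j)}(z)$ at $z=(-q)^{r-p}$, so the plan is to combine the explicit formulas for these denominators---classical for type $A^{(1)}_n$ (see \cite{DO94}) and established in the appendix for type $D^{(1)}_n$---with the description of $\phi^{-1}(\Pi_0\times\{0\})$ furnished by Lemma~\ref{lem:boundary}. Since all zeros of $d_{V(\varpi_i),V(\varpi_j)}(z)$ sit at positive powers of $(-q)$, there is nothing to prove when $r-p\le 0$, so I may assume $r-p>0$ and must show the zero has order at most $1$. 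For type $A^{(1)}_n$ this needs no input from $Q$: each $d_{V(\varpi_i),V(\varpi_j)}(z)$ is a product of \emph{pairwise distinct} linear factors of the form $z-(-q)^{|i-j|+2s}$, hence all its zeros are simple.

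The real work is type $D^{(1)}_n$. The first step is to read off from the appendix which denominators admit a zero of order $2$: this happens precisely when $i$ and $j$ both lie in the spinless range $\{2,\dots,n-2\}$ and $i+j\ge n$, in which case the double zeros are exactly the points $z=(-q)^a$ with $2n-i-j\le a\le i+j$ (and $a\equiv i+j\bmod 2$); every other denominator has only simple zeros. The case $i=j$ is then trivial, since a non-extremal vertex carries at most one element of $\phi^{-1}(\Pi_0\times\{0\})$ (so $p=r$) and for an extremal vertex $d_{V(\varpi_i),V(\varpi_i)}$ has simple zeros. So it remains to show, for $2\le i<j\le n-2$ with $i+j\ge n$ and $(i,p),(j,r)\in\phi^{-1}(\Pi_0\times\{0\})$, that $r-p\notin[\,2n-i-j,\ i+j\,]$.

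For this I would invoke the Nakayama relation \eqref{eq:nakayama permutation}: a spinless node of $D_n$ satisfies $i^{*}=i$, so \eqref{eq:nakayama permutation} forces $m_i=m_j=n-2$. By Lemma~\ref{lem:boundary}(ii) and the computations in its proof, a vertex $(i,p)$ with $i$ spinless can belong to $\phi^{-1}(\Pi_0\times\{0\})$ only if $i$ is a source of $Q$ and $p=\xi_i$, or $i$ is a sink of $Q$ and $p=\xi_i-2(n-2)$; likewise for $(j,r)$. This leaves four cases. If $i,j$ are both sources or both sinks, then $r-p=\xi_j-\xi_i$, so $|r-p|\le d(i,j)=j-i<2n-i-j$ and $r-p$ lies below the interval. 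If $i$ is a source and $j$ is a sink, then $r-p=\xi_j-\xi_i-2(n-2)<0$, already disposed of. In the remaining case $i$ is a sink and $j$ is a source, so $r-p=\xi_j-\xi_i+2(n-2)$; here I would use that along the path $i,i+1,\dots,j$ inside the $A_{n-2}$-tail of $D_n$ the edge at $i$ points towards $i$ and the edge at $j$ points towards $j$ (because $i$ is a sink and $j$ a source), which forces $\xi_j-\xi_i\ge 4-(j-i)$ when $j\ge i+2$ and $\xi_j-\xi_i=1$ when $j=i+1$; in either subcase $r-p>i+j$, so $r-p$ lies above the interval. Hence $(-q)^{r-p}$ always avoids the double zeros.

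The main obstacle I anticipate is organisational rather than conceptual: one must be certain the appendix formulas really locate \emph{all} the multiple zeros of the $D^{(1)}_n$ denominators---in particular that the spinor/spinless and spinor/spinor denominators carry no double zeros at all---and the orientation bookkeeping has to be carried out uniformly over every Dynkin quiver $Q$ of type $D_n$. The delicate case is the one where $i$ is a sink and $j$ is a source: there $r-p$ is genuinely of size $\approx 2n$, and it is precisely the orientations at the two ends of the tail-path that keep it outside the interval $[\,2n-i-j,\ i+j\,]$.
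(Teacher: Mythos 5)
Your proof is correct and follows the same overall strategy as the paper's: reduce to the order of zero of the denominator, use the appendix formulas to see that double zeros occur only for $2\le i,j\le n-2$, $i+j\ge n$, at exponents $s$ with $2n-i-j\le s\le i+j$ (and $s\equiv i+j\bmod 2$), and then use Lemma~\ref{lem:boundary}(ii) (via $k^*=k$, $m_k=n-2$ for spinless nodes of $D_n$) to reduce to the four source/sink configurations for $(i,p)$ and $(j,r)$, ruling out each. The first three cases are handled identically to the paper. In the sink/source case your argument is a genuine improvement: by observing that the edges incident to $i$ (a sink) and to $j$ (a source) at the two ends of the $A$-type path force $\xi_j-\xi_i\ge 4-(j-i)$ (and $=1$ when $j=i+1$), you get $r-p=\xi_j-\xi_i+2(n-2)\ge 2n-(j-i)>i+j$ directly. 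The paper instead uses only the weaker bound $|\xi_i-\xi_j|\le |i-j|$; as written, its chain of inequalities establishes $\max(i,j)=n-2$ and $\xi_i-\xi_j=|i-j|$, but the displayed line ``$2\min(i,j)\le 2n-4$ so that $i=j=n-2$'' does not by itself justify that conclusion---one still needs the orientation argument (a directed path must emanate from the sink $i$, impossible) to finish. Your sharper bound absorbs the orientation constraint up front and yields an immediate contradiction, so your treatment of this sub-case is both correct and tighter. Minor slips (e.g.\ ``the edge at $j$ points towards $j$'' should say the edge at $j$ points away from $j$, and the reduction to $i<j$ tacitly uses $d_{i,j}=d_{j,i}$) do not affect the argument.
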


\begin{proof}
If $\g$ is of type $A^{(1)}_n$, then all the normalized R-matrices between two fundamental representations have at most simple poles (for example, see \cite{DO94}).
 Hence we may assume that $\g$ is of type $D^{(1)}_n \ (n \ge 4)$.
We take the labeling
\eqref{Dynkin D}
of the vertices of the Dynkin diagram of type $D^{(1)}_n \ (n \ge 4)$.

Since $\Rnorm_{V(\varpi_i),V(\varpi_j)}(z_i,z_j)$ does not have a pole
at $z_j/z_i=(-q)^s$ for $s \le 0$,
we can assume $r > p$.
{}From the denominators in the appendix (Theorem~\ref{th:denomD}),
we know that
$\Rnorm_{V(\varpi_i),V(\varpi_j)}(z_i,z_j)$ has a double pole at $z_j/z_i= (-q)^s$
 if
\eqn
\quad2 \le i, j \le n-2,\quad i+j \ge n,\quad 2n-i-j \le s \le i+j,
\quad\text{and\quad$s \equiv i+j\;\mathrm{mod}\; 2$.}
\eneqn
Otherwise, $\Rnorm_{V(\varpi_i),V(\varpi_j)}(z_i,z_j)$ has at most a simple pole at $z_j/z_i= (-q)^s$.
Hence, without loss of generality, we may assume that
$2 \le i, j \le n-2$, $i+j \ge n$,
 and it is enough to show that $r-p < 2n-i-j$ or $r-p > i+j$.
In particular, $i$ and $j$ are not extremal vertices in $Q$.
On the other hand,
it is known that
$k^*=k$ and $m_k=n-2$ for all $k \in I_0$ (\cite[Section 6.5]{Gab}).
It follows that $(i,p) = \phi^{-1}(\alpha_i,0)$ and
$(j,r) = \phi^{-1}(\alpha_j,0)$ and $p \in \{\xi_i, \xi_i-(2n-4) \}$,
$r \in \{\xi_j, \xi_j-(2n-4) \}$ by Lemma~\ref{lem:boundary}~(ii).
{}From the definition of the height function $\xi$, we know that $|\xi_i-\xi_j| \leq |i-j|$ for any $1 \le i,j \le n-2$.

Now we have the following four cases:
  \bni
  \item  $p=\xi_i$ and $r=\xi_j$,
  \item  $p=\xi_i-(2n-4)$ and $r=\xi_j-(2n-4)$,
\item $p=\xi_i$ and $r=\xi_j-(2n-4)$,
  \item $p=\xi_i-(2n-4)$ and $r=\xi_j$,
  \ee

\noi
{\em Case} (i), (ii) : Assume that
$$2n-i-j \leq r-p=\xi_j-\xi_i. $$
Then we have $2n-i-j \leq |i-j|$
so that $\max(i,j) \geq n$, which  contradicts  $i,j \leq n-2$.
Hence we obtain $r-p < 2n-i-j$.

\smallskip\noi
{\em Case} (iii) :
we have $r - p = -2n+4+\xi_j - \xi_i \le -2n+4 +|i-j| \le -n$,
which  contradicts  $r-p > 0$.

\smallskip\noi
{\em Case} (iv) :
In this case,  $i$ is a sink of $Q$ and $j$ is a source of $Q$.
Assume that $$r-p=\xi_j-\xi_i+(2n-4) \leq i+j.$$
Then we have
$$2n-4   \leq i+j +\xi_i -\xi_j \le i+j +|i-j| = 2 \max(i,j) \le 2n-4$$
and hence $$2n-4  = i+j +\xi_i -\xi_j = 2 \max(i,j).$$
On the other hand, we have
$$2 \min(i,j) = i+j -|i-j| \le i+j+\xi_i-\xi_j=2n-4$$
so that
$i=j=n-2$. It contradicts the fact that $i$ is a sink of $Q$ and $j$ is a source of $Q$.
We conclude that $r-p > i+j$.
\end{proof}

\subsection{The category $\CC_Q$}

We fix  a function $o \col I_0 \to \{1, -1\}$ using the height function in \S \ref{subsec:Qhat} as
 \eqn o(i) \seteq -(-1)^{\xi_i} \ (i \in I_0). \eneqn
Then we have
\eqn
\pi_{V(\varpi_i),j}(u) =\begin{cases}
  1 & \text{if} \  j \neq i, \\
  1 -(-1)^{\xi_i}(-q)^{-h} u & \text{if} \ j=i,
\end{cases}
\eneqn
where $h$ denotes the Coxeter number of $\g$.

Set $\mathcal Y_\Z \seteq \Z[Y_{i,p}^{\pm 1} \ ; \ (i,p) \in \widehat I_0]$, the Laurent polynomial ring generated by  the indeterminates  $Y_{i,p}$'s.
An element $m$ in $\mathcal Y_\Z$ is called a {\it dominant monomial } if $m= \prod_{(i,p) \in \widehat I_0}  Y_{i, p}^{n_{i,p}}$ with $n_{i,p} \in \Z_{\ge 0}$ for all $(i,p) \in \widehat I_0$.

Let $m= \prod_{(i,p) \in \widehat I_0}  Y_{i, p}^{n_{i,p}}$ be a dominant monomial in $\mathcal Y_\Z$.
Following \cite{HL11}, we denote by $L(m)$ the simple module whose Drinfeld polynomials are given by \eqn
\pi_{L(m),i}(u)=
\prod_{\substack{p \in \Z \ \text{such that}  \\ (i,p) \in \widehat I_0}}
(1- q^{p} u)^{n_{i,p}} \qquad ( i \in I_0).
\eneqn
In particular, we have
\eqn
L(Y_{i,p}) \simeq V(\varpi_i)_{(-q)^{p+h}},
\eneqn
since $\pi_{(V_a), i}(u)= \pi_{V, i}(au)$ for  any simple  $V \in \CC_{\g}$, $i \in I_0$ and $a \in \cor^\times$ (see, for example \cite{CP95}).

Consider a partial order on $\widehat I_0$ given by
$(i,p) < (j,q)  \Leftrightarrow  p < q$.
Then for each dominant monomial $m= \prod_{(i,p) \in \widehat I_0}  Y_{i, p}^{n_{i,p}}$, the decreasingly ordered tensor product
\eqn
M(m) =\bigotimes_{(i,p) \in \widehat I_0}^{\leftarrow} L(Y_{i,p})^{n_{i,p}},
\eneqn
is well-defined up to isomorphism.
We call $M(m)$ the {\it standard module}  corresponding to the dominant monomial $m$.
By \cite[Theorem 9.2]{Kas02}, we know that $L(m)$ is isomorphic to the head of $M(m)$.

Let us define the category $\CC_{\mathbb Z}$ as the full subcategory of $\CC_\g$ whose objects $V$ satisfy that every composition factor $V$ is isomorphic to a module in
\eqn \Irr_\Z \seteq \set{L(m)}{ m \ \text{is a dominant monomial in } \mathcal Y_\Z}.
\eneqn
The category $\CC_\Z$ is closed under taking submodules, quotient modules, and extensions.

Recall that there exists an injective ring homomorphism
\eqn
\chi_q \colon \K(\CC_\g) \monoto \mathcal Y,
\eneqn
where $\K(\CC_\g)$ denotes the Grothendieck ring of $\CC_\g$ and
$\mathcal Y$ denotes the Laurent polynomial ring
$\Z[Y^{\pm1}_{i,a}]_{i \in I_0, a\in \cor^\times}$ which is generated by
the variables $Y^{\pm1}_{i,a}$ $(i \in I_0, a\in \cor^\times)$.
For $M \in \CC_\g$, we call $\chi_q(M)$ the {\it $q$-character} of $M$
(\cite{FR99, Her101}).
By identifying $Y_{i,p}$ with $Y_{i,q^p}$ for $(i,p) \in \widehat I_0$, we embed $\mathcal Y_\Z$ into $\mathcal Y$.

 Since  $\chi_q(M) \in \mathcal Y_\Z$ for every module  $M \in \CC_\Z$ (\cite[Proposition 5.8]{HL10}), we know that $\CC_\Z$ is closed under taking tensor products.
Hence $\Irr_\Z$ is the set of isomorphism classes of simple subquotients of tensor products of the modules in
\eqn
   \set{L(Y_{i,p})\simeq V(\varpi_i)_{(-q)^{p+h}}}{(i,p) \in \widehat I_0}.
\eneqn
It follows that the Grothendieck ring $\K(\CC_\Z)$ of $\CC_\Z$ is isomorphic to
the polynomial ring in
$\chi_q(L(Y_{i,p})) \ \bl (i,p) \in \widehat I_0 \br $.

Let $\mathcal Y_Q$ be the subring of $\mathcal Y_\Z$ generated by
$\set{Y_{i,p}^{\pm1}}{(i,p) \in  \phi^{-1}(\Delta_+ \times \{0\})}$.
Define the category $\CC_Q$ as the full subcategory of $\CC_{\mathbb Z}$ whose objects $V$ satisfy that
every composition factor of $V$ is isomorphic to a module in
\eqn \Irr_Q \seteq \set{L(m)}{ m \ \text{is a dominant monomial in } \mathcal Y_Q}. \eneqn
The category $\CC_Q$ is closed under taking submodules, quotient modules, and extensions.
By \cite[Lemma 5.8]{HL11}, $\CC_Q$ is closed under taking tensor products so that
$\Irr_Q$ is the set of isomorphism classes of simple subquotients of tensor products of the modules in
\eqn
   \set{L(Y_{i,p})\simeq V(\varpi_i)_{(-q)^{p+h}}}{(i,p) \in \phi^{-1}(\Delta_+ \times \{0\})}.
\eneqn

Let us denote by $\K(\CC_Q)$ the  Grothendieck ring of $\CC_Q$.
Then we have
\eqn
&&\K(\CC_Q) \isoto \Z[\chi_q(L(Y_{i,p})) \ ; \ (i,p) \in \phi^{-1}(\Delta_+ \times \{0\}) ].
\eneqn
{}From now on, we identify $\K(\CC_Q)$ with the above polynomial ring.

\section{The functor $\mathcal F$}
Let us keep the notations $\g$ and $\g_0$ as in the preceding sections.
In particular $\g$ and $\g_0$ are simply-laced.

\subsection{PBW basis and dual PBW basis}

We recall the $\Q(q)$-algebra automorphisms $T_i\col U_q(\g_0) \rightarrow U_q(\g_0)$ for $i \in I_0$ introduced in \cite{Lus90} which are given by
\eqn
&&T_i(q^h) = q^{s_i(h)}, \quad T_i(e_i) = -f_i K_i, \quad
T_i(f_i) = - K_i^{-1} e_i , \\
&&T_i(e_j) = \sum_{r+s=-a_{ij}} (-1)^r q^{-r} e_i^{(s)} e_j e_i^{(r)} \ \text{for} \ i \neq j, \\
&&T_i(f_j) = \sum_{r+s=-a_{ij}} (-1)^r q^{r} f_i^{(r)} f_j f_i^{(s)} \ \text{for} \ i \neq j.
\eneqn

Fix  a reduced expression
$w_0=s_{i_1} \cdots s_{i_r}$ of the longest element $w_0$ in the Weyl group $W_0$ of $\g_0$ and set $\beta_k\seteq s_{i_1} \cdots s_{i_{k-1}} (\alpha_{i_k}) \in \Delta_+$.
Then, for each $1 \le k \le r$,  the element
\eqn
F(\beta_k) \seteq T_{i_1} \cdots T_{i_{k-1}} (f_{i_k})
\eneqn
belongs to $U^-_\A(\g_0)$.
We call $F(\beta_k)$ the {\it root vector} corresponding to the root
$\beta_k$.
For each ${\bf c}=(c_1, \ldots ,c_r) \in \Z_{\ge 0}^r$, set
\eqn
&&F({\bf c}) \seteq F(\beta_1)^{(c_1)} \cdots F(\beta_r)^{(c_r)} \ \text{and} \
F^\upper({\bf c}) \seteq \dfrac{F({\bf c})}{(F({\bf c}),F({\bf c}))},
\eneqn
where  $x^{(c)} \seteq x^c / \prod_{k=1}^{c} \frac{q^k-q^{-k}}{q-q^{-1}}$ for
$x \in  U^-_\A(\g_0)$ and $c \in \Z_{\ge 0}$.
Since $(F({\bf c}),F({\bf c'}))=0$ for ${\bf c} \neq {\bf c'}$ (\cite[\S 38.2.3]{Lus93}), we have
$$(F^\upper({\bf c}),F({\bf c'})) = \delta_{{\bf c},{\bf c'}}.$$
 Note that
$$F^\upper({\bf c})=F^\upper(\beta_1)^{c_1} \cdots F^\upper(\beta_r)^{c_r},$$
where  $F^\upper(\beta_j)\seteq F(\beta_j) / (F(\beta_j),F(\beta_j) )$.
Let $\bf i$ be the sequence $(i_1, \ldots, i_r)$
corresponding to the reduced expression $s_{i_1} \cdots s_{i_r}$ of $w_0$.
As shown in \cite{Lus90} the sets
 \eqn
 &&P{}_{{\bf i}} \seteq \set{F({\bf c})}{ {\bf c} \in  \Z_{\ge 0}^r } \ \text{and} \
 P^\upper_{{\bf i}} \seteq \set{F^\upper({\bf c})}{ {\bf c} \in  \Z_{\ge 0}^r }
 \eneqn
are  $\A$-basis of $U^-_\A(\g_0)$ and $U^-_\A(\g_0)^\vee$, respectively.
  We call $P{}_{\bf i}$ and $P^\upper_{\bf i}$ the
{\it PBW basis of $U^-_q(\g_0)$} associated with $\bf i$ and
the {\it dual PBW basis of $U^-_q(\g_0)$} associated with $\bf i$,   respectively.
 The element  $F^\upper(\beta_j)$  is called the \emph{dual root vector} corresponding to the root $\beta_j$. We have $F^\upper(\beta_j) \in \B^\upper$.
It is not difficult to see that $\set{F^\upper(\beta_j)}{1 \le j \le r}$ generates $U^-_\A(\g_0)$ as an $\A$-algebra (for example, see \cite[\S 7.2]{GLS11b}).
The upper global basis can be characterized in terms of the dual PBW basis as follows.

\Prop[{\cite[Theorem 4.29]{Kimura12}}] \label{prop:transition mat}
For each $\bfc=(c_1,\ldots, c_r) \in \Z_{\geq 0}^r $, set
$$\KP({\bf c}) \seteq \set{{\bf a}=(a_1, \ldots, a_r) \in \Z_{\ge 0}^r}%
{\sum_{k=1}^r c_k \beta_k = \sum_{k=1}^r a_k \beta_k }.$$
We define a total order $>$ on $\Z^r_{\ge 0}$:
$\bfc'=(c'_1,\ldots,c'_r) > \bfc =(c_1,\ldots,c_r)$ if and only if
there exists $1 \le k \le r$ such that $c'_t = c_t$ for all $t < k$ and $c'_k > c_k$.

Then there exists a unique $\A$-basis
$$\set{B^\upper({\bf c})}{{\bf c} \in \Z_{\ge 0}^{r}}$$
of $U^-_\A(\g_0)$ with the following properties:
\eqn
&&\db(B^\upper({\bf c}))=B^\upper({\bf c}), \\
&&F^\upper({\bf c})= B^\upper({\bf c}) + \sum_{ {\bf a} \in \KP({\bf c}), \,   {\bf a} < {\bf c}} \kappa_{{\bf a},{\bf c}}(q) B^\upper(\bfa)
\eneqn
for some $\kappa_{\bfa, \bfc} (q) \in q\Z[q]$.
Moreover, we have
$$\set{B^\upper({\bf c})}{{\bf c} \in \Z_{\ge 0}^{r}} = \B^\upper.$$
\enprop

Note that if $\beta_j = \al_k$ for some $k \in I_0$, then
$F^\upper(\beta_j) = F^\upper(\bfc) = B^\upper(\bfc) = f_k $, where $\bfc = (c_1, \cdots, c_r) $
such that $c_u=0$  for $u \neq j$ and $c_j=1$.

\subsection{Hernandez-Leclerc homomorphism $\widetilde{\Phi}$}

The following theorem is a modification of Theorem 6.1 in \cite{HL11}.
\begin{theorem}[{\cite[Theorem 6.1]{HL11}}] \label{thm:HL}

Let $Q$ be a quiver whose underlying graph is the Dynkin diagram of $\g_0$,
and let $\phi\col\widehat I_0 \to \widehat \Delta$ be the bijection defined in \eqref{eq:phi}.
Fix a reduced expression  $w_0=s_{i_1} \cdots s_{i_r}$ of the longest element $w_0$ in the Weyl group $W_0$ of $\g_0$  adapted to $Q$.
 Then there exists a unique ring homomorphism $\widetilde{\Phi} \col U^-_\A(\g_0)^\vee \rightarrow \K(\CC_Q)$ given by
\eqn
   &\widetilde{\Phi}(q)= 1, \\
   &\widetilde{\Phi} (F^\upper(\beta_d))= [L(Y_{i,p})]\quad
\text{where $\phi(i,p) = (\beta_d,0)$.}
\eneqn
 The homomorphism $\widetilde{\Phi}$ is surjective and
$\Ker\widetilde{\Phi}= (q-1) U^-_\A(\g_0)^\vee$. Moreover,
$\widetilde \Phi$ sends the dual PBW basis and the upper global basis  to the set of isomorphism classes of standard modules and the one of simple modules, respectively.
\end{theorem}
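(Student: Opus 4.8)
The plan is to derive the theorem from \cite[Theorem 6.1]{HL11} by lifting Hernandez--Leclerc's isomorphism to the $\A$-form and then reading off the images of the dual root vectors. Write $\overline U\seteq U^-_\A(\g_0)^\vee/(q-1)\,U^-_\A(\g_0)^\vee$. Since $U^-_\A(\g_0)^\vee=\soplus_{b\in B(\infty)}\A\,G^\upper(b)$ is $\A$-free on the upper global basis, $\overline U$ is $\Z$-free on $\{\overline{G^\upper(b)}\}_{b\in B(\infty)}$; likewise the image in $\overline U$ of the dual PBW basis $P^\upper_{\mathbf i}$ is a $\Z$-basis. Under the standard identification $\overline U\otimes_\Z\C\cong\C[N]$, \cite[Theorem 6.1]{HL11} provides a $\C$-algebra isomorphism $\C\otimes_\Z\overline U\isoto\C\otimes_\Z\K(\CC_Q)$ carrying the (specialized) dual PBW basis to the classes $[M(m)]$ of standard modules and the dual canonical basis to the classes of simple modules. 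Because each of these two $\Z$-bases of $\overline U$ is carried to a $\Z$-basis of $\K(\CC_Q)$ (monomials in the fundamental classes, respectively classes of simples), this isomorphism restricts to a $\Z$-algebra isomorphism $\overline\Phi\col\overline U\isoto\K(\CC_Q)$.

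Now set $\widetilde\Phi\seteq\overline\Phi\circ\pi$ with $\pi\col U^-_\A(\g_0)^\vee\epito\overline U$ the canonical projection. It is a ring homomorphism, $\widetilde\Phi(q)=\overline\Phi(1)=1$, it is surjective since $\overline\Phi$ is an isomorphism, and $\Ker\widetilde\Phi=\Ker\pi=(q-1)\,U^-_\A(\g_0)^\vee$ since $\overline\Phi$ is injective. Since $\pi$ maps $P^\upper_{\mathbf i}$ and $\B^\upper$ onto the corresponding $\Z$-bases of $\overline U$, the assertions that $\widetilde\Phi$ sends $P^\upper_{\mathbf i}$ to standard modules and $\B^\upper$ to simple modules follow from the same assertions for $\overline\Phi$, i.e.\ from \cite[Theorem 6.1]{HL11}. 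For a single dual root vector, $F^\upper(\beta_d)=F^\upper(\mathbf c)$ with $\mathbf c$ the $d$-th coordinate vector, so $\widetilde\Phi(F^\upper(\beta_d))$ is the class of the standard module attached under the above dictionary to the corresponding dominant monomial; since $\mathbf i$ is adapted to $Q$, the bijection $\phi$ identifies that monomial with $Y_{i,p}$, where $\phi(i,p)=(\beta_d,0)$, and $M(Y_{i,p})=L(Y_{i,p})$ because a standard module on a dominant monomial of the form $Y_{i,p}$ is already simple; hence $\widetilde\Phi(F^\upper(\beta_d))=[L(Y_{i,p})]$. Uniqueness is immediate: $q$ is central and, as recalled above, $\{F^\upper(\beta_j)\}_{1\le j\le r}$ generates $U^-_\A(\g_0)^\vee$ as an $\A$-algebra, so a ring homomorphism out of it is pinned down by the images of $q$ and of the dual root vectors.

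The substantive point --- and the reason this is a \emph{modification} of, rather than a verbatim restatement of, \cite[Theorem 6.1]{HL11} --- is matching conventions before the dictionary above may be used in the stated form: one must reconcile Hernandez--Leclerc's spectral parameters with the shifted normalization $L(Y_{i,p})\simeq V(\varpi_i)_{(-q)^{p+h}}$, the sign function $o(i)=-(-1)^{\xi_i}$, the labellings of the Dynkin diagram and of $\widehat I_0$, and the identification via $\chi_q$ of monomials in $\mathcal Y_\Z$ with characters of objects of $\CC_Q$; this also requires aligning the combinatorics of $\widehat Q$, $\Gamma_Q$ and of the adapted word $\mathbf i$ (as in \S\ref{subsec:Qhat}, e.g.\ Lemma~\ref{lem:p_k > p_l}) with the ones in \cite{HL11}. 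Once this is done, the descent of HL's $\C$-linear isomorphism to the $\Z$-forms $\overline U$ and $\K(\CC_Q)$ is automatic from the correspondence of the distinguished bases, and the remaining verifications are formal; so I expect the careful translation of conventions, not any new algebraic content, to be the main obstacle.
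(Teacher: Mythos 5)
Your proposal is correct in substance and takes essentially the same route as the paper: reduce everything to \cite[Theorem 6.1]{HL11} and then read off the images of the dual root vectors from the dictionary that matches standard modules with dual PBW elements and simples with the upper global basis. The one place where the paper does markedly more work than you sketch is the passage from HL's actual Theorem~6.1 (which is a $\C(t^{1/2})$-algebra isomorphism between the \emph{quantum} Grothendieck ring $\mathcal K_{t,Q}$ and a rescaled $U^+_{\C(t^{1/2})}(\g_0)^\vee$, with rescaled dual PBW/global bases) to the statement you quote at $q=1$. The paper factors explicitly through the specialization ${\rm ev}_{t^{1/2}=1}\col\mathcal K_{t,Q}\to\C\otimes_\Z\K(\CC_Q)$, verifies that this evaluation turns $(q,t)$-characters of standards/simples into their $q$-characters, and uses the $\A$-form $U^+_\A(\g_0)^\vee$ of \cite{GLS11b} together with the isomorphism $-\circ\vee$ (via \cite{GLS11b}, \cite{KQ12}) to align the $U^-$ and $U^+$ conventions and to see that the rescaled bases descend to the genuine dual PBW and upper global bases over $\A$. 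You collapse this into the sentence ``\cite[Theorem 6.1]{HL11} provides a $\C$-algebra isomorphism $\C\otimes_\Z\overline U\isoto\C\otimes_\Z\K(\CC_Q)$ carrying the specialized dual PBW/global bases to standards/simples,'' which is the specialized consequence rather than Theorem~6.1 verbatim; you then descend to $\Z$-forms because both bases are $\Z$-bases. That descent argument is fine, and you do flag the convention-matching as the real work, so there is no gap — just a compression of the paper's intermediate step through $\mathcal K_{t,Q}$, ${\rm ev}_{t^{1/2}=1}$, and $-\circ\vee$. Your observations that uniqueness follows because $\{F^\upper(\beta_j)\}$ generate $U^-_\A(\g_0)^\vee$ as an $\A$-algebra, and that $M(Y_{i,p})=L(Y_{i,p})$, are correct and match the paper's intent.
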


\Proof
In \cite{HL11}, Hernandez and Leclerc constructed a $\C(t^{1/2})$-algebra isomorphism
$$\Phi \col \mathcal K_{t,Q} \isoto U_{\C(t^{1/2})}^+(\g_0)^\vee,$$
where $\mathcal K_{t,Q}$ denotes a $\C(t^{1/2})$-algebra whose specialization at $t^{1/2}=1$ is
isomorphic to  $\C \otimes_\Z \K(\CC_Q)$, and
$U_{\C(t^{1/2})}^+(\g_0)^\vee = \C(t^{1/2}) \otimes_\A U_\A^+(\g_0)^\vee$.
Here, the $\A$-form  $U_\A^+(\g_0)^\vee$ of $U_q^+(\g_0)$ is defined as in \cite[\S~6]{GLS11b} and  $\C[t^{\pm 1/2}]$ is an $\A$-algebra by the map $q \mapsto t$.

The algebra  $\mathcal K_{t,Q}$ has two distinguished $\C(t^{1/2})$-bases
\eqn
&&\set{\chi_{q,t}(L(m))}{m \ \text{is a dominant monomial in} \ \mathcal Y_Q } \ \text{and} \\
&&\set{\chi_{q,t}(M(m))}{m \ \text{is a dominant monomial in} \ \mathcal Y_Q }
\eneqn
(see, \cite[\S~ 5.6, 5.8 ]{HL11}).
Here, $\chi_{q,t}(V)$ denotes the {\it $(q,t)$-character} of $\uqpg$-modules $V$ in $\CC_\g$.
Moreover, the $\Z[t^{\pm 1}]$-submodules of $\mathcal K_{t,Q}$ generated by these bases coincide (\cite{Nak041}, see also \cite{HL11}). Let us denote it by $\mathcal K_{\Z[t^{\pm 1}],Q}$.
Then $\mathcal K_{\Z[t^{\pm 1}],Q}$ is a $\Z[t^{\pm 1}]$-subalgebra of $\mathcal K_{t,Q}$ (\cite{VV02}, see also \cite{HL11}).
On the other hand, the algebra $U_{\C(t^{1/2})}^+(\g_0)^\vee$ has two distinguished
$\C(t^{1/2})$-bases, so called the {\em rescaled upper global basis} and the {\em rescaled dual PBW basis}.
A rescaled upper global basis element is
an upper global basis element of $U^+_q(\g_0)$
up to a multiple of some powers of $t^{\pm 1/2}$. For their  precise
definitions, see \cite[\S ~6]{GLS11b}.  The rescaled dual PBW basis element is defined in a similar way
(see, \cite[\S~6.1]{HL11}).

Recall that we have a $\Q$-algebra isomorphism
\eq - \circ \vee \col U^-_\A(\g_0)^\vee \to
U^+_\A(\g_0)^\vee \eneq
given by $f_i \mapsto e_i$, $q \mapsto q^{-1}$.
By \cite[Lemma 6.1]{GLS11b}, we know that the $\Q$-algebra anti-isomorphism $- \circ \vee \circ *$
sends $\B^\upper$ to  the upper global basis  of $U_q^+(\g_0)$.
Since $\B^\upper$ is stable under the map $*$ (\cite{Kas91, Kas932}),
the $\Q$-algebra isomorphism $- \circ \vee$ also sends $\B^\upper$ to
 the upper global basis of $U_q^+(\g_0)$.
By \cite[Lemma 4.3.7]{KQ12}, we know that
$- \circ \vee $ sends  $P^\upper_{{\bf i}} $  to the set of dual PBW basis of $U^+_q(\g_0)$.
In particular, we have $\overline{(F^\upper(\beta_d))}^\vee =E^*(\beta_d) $ for all $1 \le d \le r$,
where $E^*(\beta_d)$ denotes the {\it dual PBW generator} of $U^+_q(\g_0)$ corresponding to the root $\beta_d$.
Under the isomorphism $\Phi$, the element $\chi_{q,t}(L(m))$ is mapped to a rescaled upper global basis element  and  $\chi_{q,t}(M(m))$ is mapped to a rescaled dual PBW basis element, respectively (\cite[Theorem 6.1]{HL11}).
In particular, we have
\eqn \Phi(\chi_{q,t}(L(Y_{i,p}))) =  t^{m} E^*(\beta_d), \eneqn
for some $m \in \dfrac{1}{2}\Z$  (see the proof of  \cite[Theorem 6.1]{HL11}).

Let ${\rm ev}_{t^{1/2}=1} \colon \mathcal K_{t,Q} \to \C \otimes_\Z \K(\CC_Q) $ be  the surjective $\C$-algebra homomorphism induced by mapping $t^{1/2} \mapsto 1$. Note that the $(q,t)$-characters of simple and standard modules in $\CC_Q$ are mapped to the $q$-characters of those under ${\rm ev}_{t^{1/2}=1}$.
Hence the upper global basis (respectively, dual PBW basis) elements in  $U_{\C(t^{1/2})}^+(\g_0)^\vee$ are mapped to the $q$-characters of simple (respectively, standard) modules under the homomorphism ${\rm ev}_{t^{1/2}=1} \circ \Phi^{-1}$. We  identify  the isomorphism class of a module in $\CC_\g$ with its $q$-character.
Then we obtain a surjective ring homomorphism
\eq {\rm ev}_{t^{1/2}=1} \circ \Phi^{-1}|_{U_\A^+(\g_0)^\vee} \col U_\A^+(\g_0)^\vee \to \K(\CC_Q)\eneq
whose kernel is $(q-1)U_\A^+(\g_0)^\vee $.
Finally, we define the map $\widetilde \Phi$ as
\eqn \widetilde \Phi= {\rm ev}_{t^{1/2}=1} \circ \Phi^{-1}|_{U_A^+(\g_0)^\vee} \circ - \circ \vee. \eneqn
Then $\widetilde \Phi$ satisfies all the desired properties.
\QED

\subsection{Homomorphism induced by the functor $\mathcal F$}\hfill

The following theorem is one of the main result in this section.

\begin{theorem} \label{thm:corresponding KLR}
Let $\g$ be a simply-laced affine Kac-Moody algebra, i.e.,
of type $A^{(1)}_n \ (n \ge 1)$, $D^{(1)}_n \ (n \ge 4)$, $E^{(1)}_6, E^{(1)}_7$ or $E^{(1)}_8$.
Fix  a reduced expression $w_0=s_{i_1} \cdots s_{i_r}$  of the longest element $w_0$ in the Weyl group $W_0$ of $\g_0$, which is  adapted to $Q$.

  Set
  \eq \label{eq:J}
  J \seteq \set{(i,p) \in \widehat I_0}{ \phi(i,p) \in \Pi_0 \times \{0\}},
  \eneq
  where $\phi$ is the map given in \eqref{eq:phi}.
  Let   $s \col J \to \set{V(\varpi_i)}{i \in I_0}$ and
  $X \col J \to \cor^\times $ be the maps given by
\eqn
  s(i,p) = V(\varpi_i), \ \text{and} \ X(i,p) = (-q)^{p+h} \ \text{for} \ (i,p) \in J.\eneqn
We assume the following conditions:
\eq
&&\parbox{75ex}{
for any $(i,p), (j,r) \in J$,
the normalized R-matrix $\Rnorm_{V(\varpi_i),V(\varpi_j)}(z)$ has at most
a simple pole at $z=(-q)^{r-p}$.}\label{conj:simple}
\eneq
  Then the corresponding Cartan matrix in \eqref{eq:Cartan matrix} to the datum $(J,s,X)$ is  of type $\g_0$.
  Moreover, we have a quiver isomorphism $ Q^{{\rm rev}} \isoto   \Gamma^J$  given by
  $$ s \mapsto \phi^{-1}(\alpha_s,0) \ \text{for} \ s \in I_0.$$
Here, $\Gamma^J$ is the quiver defined in \eqref{gammaJ}.
\end{theorem}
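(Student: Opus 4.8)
The plan is to compute the quiver $\Gamma^J$ of \eqref{gammaJ} explicitly, playing the known denominators of the normalized R-matrices between fundamental modules off against the combinatorics of the bijection $\phi$. First I would reduce the theorem to a statement about those denominators. Since $\phi$ is bijective, $s\mapsto\phi^{-1}(\alpha_s,0)$ is a bijection $I_0\isoto J$; write $\phi^{-1}(\alpha_s,0)=(a_s,p_s)\in\widehat I_0$, so that $s(a_s,p_s)=V(\varpi_{a_s})$ and $X(a_s,p_s)=(-q)^{p_s+h}$. By \eqref{gammaJ}, the number of arrows from $\phi^{-1}(\alpha_s,0)$ to $\phi^{-1}(\alpha_t,0)$ in $\Gamma^J$ equals the order $d_{st}$ of the zero of $d_{V(\varpi_{a_s}),V(\varpi_{a_t})}(z)$ at $z=(-q)^{p_t-p_s}$, and (using $d_{V(\varpi_i),V(\varpi_j)}(z)=d_{V(\varpi_j),V(\varpi_i)}(z)$) the number in the reverse direction is the order of the same polynomial at $z=(-q)^{p_s-p_t}$. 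By \eqref{eq:Cartan matrix} it therefore suffices to show, for $s\neq t$ in $I_0$, that $d_{st}=1$ if $Q$ has an arrow $t\to s$ and $d_{st}=0$ otherwise, and that $d_{ss}=0$: this forces $a^J_{st}=-d_{st}-d_{ts}$ to equal $-1$ exactly when $s,t$ are adjacent in the Dynkin diagram, so $A^J$ is the Cartan matrix of $\g_0$, and it makes $s\mapsto\phi^{-1}(\alpha_s,0)$ a quiver isomorphism $Q^{{\rm rev}}\isoto\Gamma^J$.

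Next I would pin down the positions $(a_s,p_s)$ using Lemma~\ref{lem:boundary}: if $s$ is a source of $Q$ then $(a_s,p_s)=(s,\xi_s)$; if $s$ is a sink then $(a_s,p_s)=(s^*,\xi_{s^*}-2m_{s^*})$; and if $s$ is neither, then the proof of that lemma gives $(a_s,p_s)=(i,\xi_s-d(s,i))$ or $(a_s,p_s)=(i^*,\xi_{s^*}-2m_{s^*}+d(s,i))$ for a suitable extremal vertex $i$, depending on the orientation of $Q$ near $s$. The core of the proof is then the identity
\[
p_s-p_t=d(a_s,a_t)+2\qquad\text{whenever $Q$ has an arrow }s\to t .
\]
To establish it, I would argue that, because the chosen reduced expression of $w_0$ is adapted to $Q$, an arrow $s\to t$ of $Q$ forces $\alpha_s$ to precede $\alpha_t$ among $\beta_1,\dots,\beta_r$ (as in the proof of Lemma~\ref{lem:p_k > p_l}, since $M_Q(\alpha_s+\alpha_t)$ is then a non-split extension of $S_Q(s)$ by $S_Q(t)$); since $\alpha_s+\alpha_t\in\Delta_+$, that lemma then yields a path $\phi^{-1}(\alpha_t,0)\to\phi^{-1}(\alpha_s,0)$ inside $\phi^{-1}(\Delta_+\times\{0\})$, in particular $p_s>p_t$. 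Feeding in the explicit positions above — and, when $a_s$ or $a_t$ is an extremal vertex, the type-$A$ subquiver computations from the proof of Lemma~\ref{lem:boundary} — a case analysis on the shape of $Q$ near the edge joining $s$ and $t$ yields the displayed equality. For a non-adjacent pair $s,t$ one uses the same positions together with the distance inequalities of Lemma~\ref{lem:range} to see that $|p_s-p_t|$ is either strictly smaller than $d(a_s,a_t)+2$ or strictly larger than the top exponent appearing in $d_{V(\varpi_{a_s}),V(\varpi_{a_t})}(z)$.

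Finally I would feed these positions into the denominators. For type $A^{(1)}_n$ one has $d_{V(\varpi_i),V(\varpi_j)}(z)=\prod_{k=1}^{\min(i,j,n+1-i,n+1-j)}\bl z-(-q)^{|i-j|+2k}\br$ (see \cite{DO94}); for type $D^{(1)}_n$ one uses Theorem~\ref{th:denomD} of the appendix; for type $E^{(1)}_n$ one uses the hypothesis \eqref{conj:simple} together with the known location of the first pole. In each case the input needed is that $\Rnorm_{V(\varpi_i),V(\varpi_j)}(z)$ is regular at $z=(-q)^s$ for $0\le s<d(i,j)+2$ and has a simple pole at $z=(-q)^{d(i,j)+2}$ when $i\ne j$; in type $D^{(1)}_n$ the possible double poles are irrelevant here, since by Lemma~\ref{lem:simple poles boundary} an evaluation point between two boundary vertices carries at most a simple pole. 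Thus, when $Q$ has an arrow $s\to t$, the point $z=(-q)^{p_s-p_t}=(-q)^{d(a_s,a_t)+2}$ is a simple zero of $d_{V(\varpi_{a_s}),V(\varpi_{a_t})}(z)$, so $d_{ts}=1$, while $z=(-q)^{p_t-p_s}$ has negative exponent, so $d_{st}=0$; for a non-adjacent pair neither $(-q)^{p_s-p_t}$ nor $(-q)^{p_t-p_s}$ is a zero, so $d_{st}=d_{ts}=0$; and $d_{ss}=0$ because $z=1$ is not a zero of $d_{V(\varpi_{a_s}),V(\varpi_{a_s})}(z)$. This is exactly the statement to which the first step reduced the theorem.

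I expect the main obstacle to be the second step: translating the Auslander--Reiten positions $\phi^{-1}(\alpha_s,0)$ into the Dynkin-diagram distances $d(a_s,a_t)$ and verifying that the evaluation point $z=(-q)^{p_s-p_t}$ always lands on the \emph{first} pole of the R-matrix. This is the genuinely type-sensitive bookkeeping, and it is where the combinatorics of $\widehat Q$ from Lemmas~\ref{lem:range}--\ref{lem:boundary} and the denominator computations of the appendix (or the conjectural input for type $E$) are actually used; the rest is formal.
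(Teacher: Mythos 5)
Your opening reduction is sound: since one of $d_{st},d_{ts}$ automatically vanishes (as $d_{V(\varpi_i),V(\varpi_j)}(z)$ has no zeros at non-positive powers of $-q$) and the hypothesis~\eqref{conj:simple} caps the other at $1$, it suffices to decide, for each ordered pair, whether the evaluation point $(-q)^{p_t-p_s}$ is a zero of the relevant denominator. The trouble lies in the second step, where the whole computation is hung on the asserted identity $p_s-p_t=d(a_s,a_t)+2$ for an arrow $s\to t$ in $Q$. That identity is false. Take $\g_0$ of type $D_4$, $Q$ the quiver with all three arrows pointing into the central vertex ($1\to2$, $3\to2$, $4\to2$), and height function $\xi_1=\xi_3=\xi_4=0$, $\xi_2=-1$. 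Then $1$ is a source, so $\phi^{-1}(\alpha_1,0)=(1,0)$, while $2$ is a sink, so by Lemma~\ref{lem:boundary}\,(i) $\phi^{-1}(\alpha_2,0)=(2^*,\xi_{2^*}-2m_{2^*})=(2,-1-4)=(2,-5)$ (one checks $m_2=n-2=2$ either directly or from \cite[\S 6.5]{Gab}). Thus $p_1-p_2=5$, whereas $d(a_1,a_2)+2=d(1,2)+2=3$. The evaluation point $(-q)^5$ is a zero of $d_{1,2}(z)=(z-(-q)^3)(z-(-q)^5)$ for $D_4^{(1)}$, but it is the zero coming from the ``second family'' $(-q)^{2n-k-l-2+2s}$ in Theorem~\ref{th:denomD}, not the ``first pole'' $(-q)^{d(i,j)+2}$. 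Consequently your auxiliary claim that the evaluation always lands on, and only on, the first pole of $d_{V(\varpi_i),V(\varpi_j)}(z)$ is also wrong, and the final bookkeeping step does not go through as written; it would require a substantially more delicate case analysis tracking both families of zeros of the denominator (and, in type $E$, knowledge of zero locations that the paper neither states nor assumes).

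It is worth noting that the paper sidesteps all of this pole bookkeeping. Rather than computing $\Gamma^J$ from the explicit denominators, it uses the Hernandez--Leclerc ring isomorphism $\widetilde\Phi$ of Theorem~\ref{thm:HL}: writing $f_sf_t$ in terms of the upper global basis of $U_q^-(\g_0)_{-\alpha_s-\alpha_t}$ and applying $\widetilde\Phi$, one sees that $L(Y_{i_k,p_k})\otimes L(Y_{i_\ell,p_\ell})$ is irreducible if and only if $(\alpha_s,\alpha_t)_0=0$, which determines the underlying graph of $\Gamma^J$ without ever evaluating a denominator. Your statement of the non-adjacent case would in fact need this (or a comparably sharp substitute), since for non-adjacent $s,t$ you would otherwise have to rule out that $(-q)^{p_s-p_t}$ accidentally hits one of the higher zeros. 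If you want a purely denominator-theoretic proof, the right target is not the distance identity but the precise set of exponents at which $(-q)^{p_s-p_t}$ can land, and this needs the two-family structure of the $D_n$ denominators (and whatever the analogue is in type $E$) built in from the start.
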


\begin{proof}
 Let $\alpha_s$ and $\alpha_t$ be two simple roots of $\g_0$.
Assume that
\begin{align}
 \beta_k= \alpha_s, \ \beta_\ell = \alpha_t, \ \text{and} \  k < l.
\end{align}
Set $(i_j, p_j) = \phi^{-1}(\beta_j, 0)$ for $1 \le j \le r$.
Recall that $V(\varpi_i)_{(-q)^{p+h}}
\cong L(Y_{i,p})$ for $(i,p) \in \widehat I_0$.
First, we will show that
\eq \label{asser:irreducible}
    &&\text{$L(Y_{{i_k,p_k}}) \otimes L(Y_{i_\ell,p_\ell})$ is irreducible
if and only if $(\alpha_s \, , \, \alpha_t)_0 = 0$.}
\eneq

Let $\bfc=(c_1, \ldots, c_r)$ be the element in $\Z_{\ge 0}^r$ given by
\eqn
c_u=0 \ \text{for} \ u \neq k,l, \quad c_k=c_l=1.
\eneqn
Then we have
\eqn
 F^\upper(\bfc) = F^\upper(\beta_k)F^\upper(\beta_\ell) =f_s f_t
\eneqn
It is not difficult to see that
\eqn
(\B^\upper)_{\alpha_t +\alpha_s} = \begin{cases}
 \Big\{ B^\upper(\bfc) =f_s f_t = f_tf_s \Big\}  & \text{if} \ (\al_s, \al_t)_0=0, \\
\Big\{ B^\upper(\bfc) = \dfrac{f_s f_t -q f_tf_s}{1-q^2},
\ B^\upper(\bfa) = \dfrac{f_t f_s -q f_s f_t}{1-q^2} \Big\}  & \text{if} \ (\al_s, \al_t)_0=-1,
\end{cases}
\eneqn
where $\bfa=(a_1, \ldots, a_r)$ is the only element in $\KP(\bfc) - \{\bfc\}$ in the case of $(\al_s, \al_t)_0=-1$.
It is explicitly given by
\eqn
a_u=0 \ \text{for} \ u \neq j, \quad a_j=1,
\eneqn
where $j $ is given by $\beta_j = \al_k+\al_l$.

Since $f_s f_t = B^\upper(\bfc) + q B^\upper(\bfa)$, applying $\widetilde \Phi$, we have
\eqn
[ L(Y_{{i_k,p_k}}) \otimes L(Y_{i_\ell,p_\ell}) ] =
\widetilde \Phi(f_s f_t) =\begin{cases}
 \widetilde \Phi(B^\upper(\bfc))  & \text{if} \ (\al_s, \al_t)=0, \\
\widetilde \Phi(B^\upper(\bfc)) +  \widetilde \Phi(B^\upper(\bfa)) & \text{if} \ (\al_s, \al_t)=-1.
\end{cases}
\eneqn
Since $\widetilde \Phi(B^\upper(\bfc))$ and  $\widetilde \Phi(B^\upper(\bfa))$ are isomorphism classes of non-isomorphic simple modules, we conclude that
$L(Y_{{i_k,p_k}}) \otimes L(Y_{i_\ell,p_\ell}) $ is irreducible if and only if $(\al_s, \al_t)_0=0$.
Hence the underlying graph of the quiver $\Gamma^J$ is the Dynkin diagram of $\g_0$.

For the orientation of $\Gamma^J$, observe that  $p_k > p_\ell$ by Lemma \ref{lem:p_k > p_l}.
Hence the arrow in the quiver $\Gamma^J$ between $(i_k,p_k)$ and $(i_\ell, p_\ell)$ is given by
$$\phi^{-1}(\al_s,0)= (i_k,p_k) \leftarrow (i_\ell, p_\ell)=\phi^{-1}(\al_t,0).$$

On the other hand, we have a non-split short exact sequence
$$0 \to M_Q(\beta_\ell) \to M_Q(\beta_\ell + \beta_k) \to M_Q(\beta_k) \to 0$$
in $\C Q \smod$ by \eqref{eq:extension}.
In particular, we have a non-zero homomorphism $M_Q(\al_s + \al_t) \to S_Q(s)$.
It implies
that the arrow between $s$ and $t$  in $Q$ is given by $s \to t$.
Hence we obtain the quiver isomorphism $ Q^{{\rm rev}} \isoto   \Gamma^J$  given by
$ s \mapsto \phi^{-1}(\alpha_s,0) \ (s \in I_0)$, as desired.

\end{proof}

We conjecture that
 \begin{conjecture} \label{conj:simple poles}
For any affine Kac-Moody algebra  of type $A^{(1)}_n \ (n \ge 1)$, $D^{(1)}_n \ (n \ge 4)$, $E^{(1)}_6, E^{(1)}_7$ or $E^{(1)}_8$, our assumption
\eqref{conj:simple} holds.
 \end{conjecture}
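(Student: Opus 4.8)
The plan is to reduce the conjecture to the exceptional types and then, for those, to the denominators of the pertinent normalized R‑matrices. For the affine types $A^{(1)}_n$ and $D^{(1)}_n$ the assertion is already available: it is exactly Lemma~\ref{lem:simple poles boundary}, which in the $D$‑case relies on the explicit denominators computed in the appendix (Theorem~\ref{th:denomD}), and in the $A$‑case on the classical fact (see e.g.\ \cite{DO94}) that every normalized R‑matrix between fundamental representations of $U'_q(A^{(1)}_n)$ has only simple poles. So it remains to treat $E^{(1)}_6$, $E^{(1)}_7$, and $E^{(1)}_8$, and for each of these I would argue along the same two steps as in the proof of Lemma~\ref{lem:simple poles boundary}.

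The first step is combinatorial and turns the problem into a finite check. Since $\phi$ is a bijection, $J=\phi^{-1}(\Pi_0\times\{0\})$ has exactly one element per simple root of $\g_0$, and since $\Rnorm_{V(\varpi_i),V(\varpi_j)}(z)$ has no pole at $z=(-q)^s$ with $s\le 0$, only the finitely many pairs $(i,p),(j,r)\in J$ with $r>p$ have to be inspected. Lemma~\ref{lem:boundary} locates these positions: for a vertex that is not extremal, the only values of $p$ with $(i,p)\in\phi^{-1}(\Pi_0\times\{0\})$ are $p=\xi_i$ and $p=\xi_i-2m_i$, while an extremal vertex contributes the short progression $p=\xi_i-2s$ $(0\le s\le m_i)$; combined with $|\xi_i-\xi_j|\le d(i,j)$ and \eqref{eq:nakayama permutation} this determines, for each pair, the finitely many possible values of $r-p$. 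What then remains is to show, for each resulting triple $(i,j,\,s=r-p)$, that $d_{V(\varpi_i),V(\varpi_j)}(z)$ has a zero of order at most $1$ at $z=(-q)^{s}$.

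The substance of the argument is the second step: controlling the pole structure of the normalized R‑matrices between fundamental representations for $E^{(1)}_6$, $E^{(1)}_7$, $E^{(1)}_8$, and this is exactly where the obstacle lies. The polynomials $d_{V(\varpi_i),V(\varpi_j)}(z)$ for these types are not recorded in full in the literature, and detecting where they acquire zeros of order $\ge 2$ is precisely what is missing. One honest route is a direct computation — realizing the fundamental modules explicitly (via their crystal and global bases, or by a fusion construction from Kirillov--Reshetikhin modules) and evaluating the universal R‑matrix, or else running the Frenkel--Mukhin $q$‑character algorithm together with the reducibility criterion for $V(\varpi_i)_a\otimes V(\varpi_j)_b$. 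A more conceptual route would exploit that $J$ consists of \emph{simple‑root} positions: exactly as in the proof of Theorem~\ref{thm:corresponding KLR}, for any $(i,p),(j,r)\in J$ the tensor product $L(Y_{i,p})\otimes L(Y_{j,r})$ has composition length at most $2$ (its class equals $\widetilde\Phi(f_sf_t)$ for the corresponding pair of simple roots $\alpha_s,\alpha_t$, which is a sum of at most two pairwise non‑isomorphic simple classes), and this much holds without assuming \eqref{conj:simple}. It would then suffice to know that two fundamental modules whose tensor product has composition length $\le 2$ can contribute at most a simple zero to the corresponding denominator at such a boundary position of the Auslander--Reiten quiver; whether this implication holds in the needed generality is itself unclear to me, and proving it seems essentially as hard as — and largely equivalent to — carrying out the missing denominator computations. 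For this reason I expect the explicit pole analysis for $E^{(1)}_6$, $E^{(1)}_7$, $E^{(1)}_8$ to be the real difficulty.
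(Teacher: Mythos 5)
Your assessment is essentially correct and matches the state of affairs in the paper. The statement in question is labelled a \emph{Conjecture} precisely because the paper does \emph{not} prove it in full: the remark immediately following Conjecture~\ref{conj:simple poles} says only that the conjecture holds for types $A^{(1)}_n$ and $D^{(1)}_n$, by Lemma~\ref{lem:simple poles boundary}, and leaves the exceptional types $E^{(1)}_6$, $E^{(1)}_7$, $E^{(1)}_8$ open. You correctly identify the two ingredients behind the known cases: the combinatorial localisation of $J=\phi^{-1}(\Pi_0\times\{0\})$ to the boundary of the Auslander--Reiten quiver via Lemma~\ref{lem:boundary} (which, together with $|\xi_i-\xi_j|\le d(i,j)$ and \eqref{eq:nakayama permutation}, makes the check finite), and the explicit denominator formulas --- classical for type $A$ (\cite{DO94}) and computed in the appendix (Theorem~\ref{th:denomD}) for type $D$. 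Your observation that the missing ingredient for the $E$ types is the analogous list of denominators $d_{V(\varpi_i),V(\varpi_j)}(z)$ is exactly the gap the authors acknowledge by stating the result as a conjecture rather than a theorem.

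One small caution on your ``more conceptual route.'' It is true that Theorem~\ref{thm:HL} is established independently of assumption~\eqref{conj:simple}, so the class $[L(Y_{i,p})\otimes L(Y_{j,r})]=\widetilde\Phi(f_sf_t)$ is indeed a sum of at most two pairwise distinct simple classes in $\K(\CC_Q)$. However, composition length $\le 2$ does not by itself bound the order of the zero of the denominator at the relevant point: the denominator's vanishing order is a statement about the $\cor[z^{\pm1}]$-lattice structure of the $R$-matrix, not a purely Grothendieck-group quantity, and there is no general theorem translating ``length $\le 2$'' into ``pole order $\le 1$.'' You are right to flag this as unclear; as stated it does not close the gap, and the honest conclusion is the one you reach --- the explicit pole analysis for the $E$ series is the genuine remaining difficulty, which is precisely why the statement remains a conjecture in the paper.
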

 By Lemma~\ref{lem:simple poles boundary},
 we already know that our conjecture holds for affine Kac-Moody algebras of type
  $A^{(1)}_n$ ($n \ge 1$) and $D^{(1)}_n$ ($n \geq 4$).

\medskip
Assume \eqref{conj:simple}.
Combining  Theorem \ref{thm:gQASW duality} and Theorem~\ref{thm:corresponding KLR},
we obtain an exact  functor
$$\F \col \soplus_{n \ge 0} R^J(n) \gmod \to \CC_\g$$
satisfying
\begin{align}
  \mathcal F (S(\al_t))= V(\varpi_i)_{(-q)^{p+h}} \simeq L(Y_{i,p}) \label{eq:haut 1},
\end{align}
where $\phi(i,p) = (\al_t, 0)$ ($t \in I_0$).
Since every module in $R^J \gmod$ can be obtained from $\{ S(\alpha_t) \, ; \, t \in I_0\}$ by taking convolution products, extensions, and subquotients, we have
\begin{align}
\mathcal F\col \soplus_{n \ge 0} R^J(n) \gmod\rightarrow \CC_Q.
\end{align}

A pair $(\beta_k, \beta_\ell)$ of positive roots is called a {\it minimal pair of $\beta_j$}
if $k < \ell$, $\beta_j=\beta_k +\beta_\ell$ and there exists no pair
$(\beta_{k'}, \beta_{l'})$ such that $\beta_j = \beta_{k'} + \beta_{l'}$ and $k < k' < j< l'< l$.
Note that if $|\beta_j| \ge 2$,
then there  exists  a minimal pair for $\beta_j$.
For a positive root $\beta_j$ of $\g_0$,  let $S(\beta_j)$ be
 the simple module in $R^J(\beta_j) \gmod$
such that $[S(\beta_j)]$ is mapped onto $F^\upper(\beta_j)$ under the isomorphism
$\K(R(\beta_j) \gmod) \isoto U^-_\A(\mathfrak g_0)^{\vee}_{-\beta_j}$.
We call the module $S(\beta_j)$ the \emph{dual root module of weight $\beta_j$} (see also \cite{Kato12}).

In \cite{McNa12, BKM12},  it is shown that the family
 $\{ S(\beta_j) \, ; \, j=1, \ldots ,r\}$ in $R^J(n) \gmod$ enjoys the following property:
    for each $\beta_j \in \Delta_+$ and a minimal pair $(\beta_k, \beta_\ell)$ of $\beta_j$, there exists an exact sequence in $R^J(\beta_j) \gmod$
\begin{align} \label{eq:minimal pair}
  \xymatrix{
   0 \ar[r]& q S(\beta_j) \ar[r] & S(\beta_k) \circ S(\beta_\ell) \ar[r]^-{f}&
   q^{-1}S(\beta_\ell) \circ S(\beta_k) \ar[r] & q^{-1}S(\beta_j) \ar[r] & 0.
    }
\end{align}
Moreover, we have
  \begin{align} \label{eq:simple image of rmat}
    \Img (f) \simeq \hd (S(\beta_k) \circ S(\beta_\ell)) \simeq q^{-1} \soc (S(\beta_\ell) \circ S(\beta_k))
  \end{align}
  and $\Img (f)$   is a simple $R^J(\beta_j)$-module.

\begin{remark}
\noindent
\bni
\item
The dual PBW basis of $U_q^+(\g_0)$ in \cite{BKM12} is the image of our dual PBW basis $P^\upper_{{\bf i}}$ of $U_q^-(\g_0)$ under the $\Q(q)$-algebra anti-isomorphism $\vee \circ * \col U_q^-(\g_0) \isoto U_q^+(\g_0)$.
More precisely, our $T_i$ is the same as $T^{''}_{i,1}$ in \cite[\S 37.1]{Lus93},  whereas
$T^{''}_{i,-1}$ is used to define the PBW basis in \cite{BKM12}. Since we have
$\vee \circ * \circ T^{''}_{i,1} \circ * \circ \vee = T^{''}_{i,-1}$, the above assertion follows.

\item The dual root module $L(\beta_j)$ in \cite{BKM12} is isomorphic to  $S(\beta_j)^\psi$, the module obtained from $S(\beta_j)$ by twisting the $R^J(|\beta_j|)$-action via the automorphism $\psi$ in \eqref{eq:auto psi}.

\item
In \cite{McNa12}, the first and the last term in \eqref{eq:minimal pair} are only described as some modules whose composition factors are isomorphic to $S(\beta_j)$ up to grading shift.
In \cite[Theorem 4.7]{BKM12}, it is proved that  they are simple.
\item
If $\beta_j=\al_i$ for some $i \in I_0$, then the module $S(\alpha_i)$ given by \eqref{eq:simple root module} is isomorphic to $S(\beta_j)$ so that the notation is  consistent.
\ee
\end{remark}

\begin{theorem} \label{thm:dual pbw generators}
For $j=1,\ldots, r$, we have
\begin{align*}
  \mathcal F(S(\beta_j)) \simeq  L(Y_{i_j,p_j}).
\end{align*}
 where $\phi(i_j,p_j)=(\beta_j,0)$.
\end{theorem}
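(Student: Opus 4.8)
The plan is to induct on the height $|\beta_j|$, comparing the ring homomorphism $\phi_\F\col\K(R^J\gmod)\to\K(\CC_Q)$ induced by the exact tensor functor $\F$ with the Hernandez--Leclerc homomorphism $\widetilde\Phi$ of Theorem~\ref{thm:HL}; write $(i_j,p_j)=\phi^{-1}(\beta_j,0)$. Recall that under the isomorphism $\K(R^J\gmod)\isoto U^-_\A(\g_0)^\vee$ of Theorem~\ref{Thm:categorification} we have $[S(\beta_j)]\mapsto F^\upper(\beta_j)$, that $\widetilde\Phi(F^\upper(\beta_j))=[L(Y_{i_j,p_j})]$ with $\widetilde\Phi$ sending the dual PBW basis to standard modules and $\B^\upper$ to simple modules, and that both $\phi_\F$ and $\widetilde\Phi$ annihilate $q-1$ because $\F(q^{\pm1}M)\simeq\F(M)$. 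The base case $|\beta_j|=1$ is immediate: then $\beta_j=\alpha_{i_j}$, $S(\beta_j)$ is the module \eqref{eq:simple root module}, and Theorem~\ref{thm:gQASW duality}(a) together with the definition of $s,X$ in Theorem~\ref{thm:corresponding KLR} gives $\F(S(\alpha_{i_j}))\simeq V(\varpi_{i_j})_{(-q)^{p_j+h}}\simeq L(Y_{i_j,p_j})$, which is \eqref{eq:haut 1}.

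For the inductive step let $|\beta_j|\ge2$ and choose a minimal pair $(\beta_k,\beta_\ell)$ of $\beta_j$, so that $k<j<\ell$, $\beta_j=\beta_k+\beta_\ell$ and $|\beta_k|,|\beta_\ell|<|\beta_j|$; by induction $\F(S(\beta_k))\simeq L(Y_{i_k,p_k})$ and $\F(S(\beta_\ell))\simeq L(Y_{i_\ell,p_\ell})$. Applying $\F$ to the exact sequence \eqref{eq:minimal pair} of \cite{McNa12,BKM12}, and using $\F(M_1\circ M_2)\simeq\F(M_1)\otimes\F(M_2)$ and $\F(q^{\pm1}M)\simeq\F(M)$, gives an exact sequence in $\CC_Q$
\[
0\to\F(S(\beta_j))\to L(Y_{i_k,p_k})\otimes L(Y_{i_\ell,p_\ell})\xrightarrow{\ g\ }L(Y_{i_\ell,p_\ell})\otimes L(Y_{i_k,p_k})\to\F(S(\beta_j))\to0
\]
with $g=\F(f)$. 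By Lemma~\ref{lem:p_k > p_l}, $p_k>p_\ell$, hence $(i_k,p_k)>(i_\ell,p_\ell)$ and $L(Y_{i_k,p_k})\otimes L(Y_{i_\ell,p_\ell})$ is the standard module $M(Y_{i_k,p_k}Y_{i_\ell,p_\ell})$, whose head is $L(Y_{i_k,p_k}Y_{i_\ell,p_\ell})$ by \cite[Theorem~9.2]{Kas02}. On the other hand, \eqref{eq:minimal pair} and \eqref{eq:simple image of rmat} say that $S(\beta_k)\circ S(\beta_\ell)$ has composition length $2$, with simple factors $S(\beta_j)$ and $\Img f$; since $F^\upper(\beta_j)\in\B^\upper$, comparing $[S(\beta_k)\circ S(\beta_\ell)]=F^\upper(\beta_k)F^\upper(\beta_\ell)$ with Proposition~\ref{prop:transition mat} forces, in $U^-_\A(\g_0)^\vee$, the identity $F^\upper(\beta_k)F^\upper(\beta_\ell)=q\,F^\upper(\beta_j)+B^\upper(\bfc)$, where $\bfc\in\Z_{\ge0}^r$ has $1$'s exactly at the coordinates $k$ and $\ell$ and $B^\upper(\bfc)=[\Img f]$.

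Applying $\widetilde\Phi$ to this identity, and using $\widetilde\Phi(F^\upper(\beta_k))\widetilde\Phi(F^\upper(\beta_\ell))=[L(Y_{i_k,p_k})\otimes L(Y_{i_\ell,p_\ell})]=[M(Y_{i_k,p_k}Y_{i_\ell,p_\ell})]$, $\widetilde\Phi(B^\upper(\bfc))=[L(Y_{i_k,p_k}Y_{i_\ell,p_\ell})]$ and $\widetilde\Phi(q)=1$, we obtain $[L(Y_{i_k,p_k})\otimes L(Y_{i_\ell,p_\ell})]=[L(Y_{i_k,p_k}Y_{i_\ell,p_\ell})]+[L(Y_{i_j,p_j})]$ in $\K(\CC_Q)$; in particular this standard module has exactly two composition factors. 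The map $g=\F(f)$ lies in $\Hom_{\uqpg}(L(Y_{i_k,p_k})\otimes L(Y_{i_\ell,p_\ell}),\,L(Y_{i_\ell,p_\ell})\otimes L(Y_{i_k,p_k}))$, which is one-dimensional and spanned by the renormalized R-matrix; as the source is reducible this R-matrix is not invertible, the two tensor products are non-isomorphic, and its image is the simple head $L(Y_{i_k,p_k}Y_{i_\ell,p_\ell})$. Hence $g\neq0$ (otherwise the two tensor products would be isomorphic), $\Img g\simeq L(Y_{i_k,p_k}Y_{i_\ell,p_\ell})$, and the exact sequence yields $[\F(S(\beta_j))]=[L(Y_{i_k,p_k})\otimes L(Y_{i_\ell,p_\ell})]-[L(Y_{i_k,p_k}Y_{i_\ell,p_\ell})]=[L(Y_{i_j,p_j})]$. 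Since $L(Y_{i_j,p_j})$ is simple and appears with multiplicity one, $\F(S(\beta_j))$ has composition length one, so $\F(S(\beta_j))\simeq L(Y_{i_j,p_j})$, completing the induction.

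I expect the main obstacle to be the final passage from the Grothendieck-group identity to the module isomorphism, that is, ruling out $\F(S(\beta_j))=0$ (equivalently showing $\F(f)$ is a nonzero non-invertible intertwiner). This relies on the structure of standard modules in $\CC_Q$ and on the standard properties of R-matrices between the fundamental modules $L(Y_{i_k,p_k})$, $L(Y_{i_\ell,p_\ell})$ --- one-dimensionality of the intertwiner space and non-invertibility in the reducible case --- and one must be careful that $(i_k,p_k),(i_\ell,p_\ell)\in\phi^{-1}(\Delta_+\times\{0\})$ need not belong to $J$, so that the simple-pole hypothesis \eqref{conj:simple} does not directly govern this R-matrix; the reducibility (length two) of the product is instead supplied by \eqref{eq:minimal pair} through $\widetilde\Phi$.
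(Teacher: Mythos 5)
Your proof is correct and follows essentially the same route as the paper: induction on $|\beta_j|$, the base case from \eqref{eq:haut 1}, applying $\F$ to the minimal-pair exact sequence \eqref{eq:minimal pair}, using Lemma~\ref{lem:p_k > p_l} to get $p_k>p_\ell$, showing $\F(f)\neq0$ (otherwise the two tensor products would be isomorphic), identifying $\Img\F(f)$ with the head of the standard module, and comparing the resulting Grothendieck-group identity with the one obtained by applying $\widetilde\Phi$.

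The one place where you diverge in substance is how the second composition factor of $L(Y_{i_k,p_k})\otimes L(Y_{i_\ell,p_\ell})$ is identified. You work on the quiver Hecke side: using Proposition~\ref{prop:transition mat} and the fact that $S(\beta_k)\circ S(\beta_\ell)$ has exactly two simple factors, you pin down $F^\upper(\bfc)=q\,F^\upper(\beta_j)+B^\upper(\bfc)$, and then you invoke that $\widetilde\Phi$ carries $B^\upper(\bfc)$ to $[L(Y_{i_k,p_k}Y_{i_\ell,p_\ell})]$, i.e.\ that the Hernandez--Leclerc bijection respects the Kostant-partition/monomial parametrizations. That last step is true and is implicit in the proof of Theorem~\ref{thm:HL}, but it is stronger than what the theorem's statement literally records. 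The paper instead sidesteps this by a purely categorical argument on the $\CC_Q$ side: $\widetilde\Phi$ applied to $F^\upper(\beta_k)F^\upper(\beta_\ell)-qF^\upper(\beta_j)$ yields some simple class $[L]$, and since $L(Y_{i_k,p_k})\otimes L(Y_{i_\ell,p_\ell})$ has $L(Y_{i_k,p_k}Y_{i_\ell,p_\ell})$ as a simple subquotient (being the head of that ordered tensor product by \cite[Theorem~9.2]{Kas02}), and since the only two factors available are $L$ and $L(Y_{i_j,p_j})\ne L(Y_{i_k,p_k}Y_{i_\ell,p_\ell})$, necessarily $L\simeq L(Y_{i_k,p_k}Y_{i_\ell,p_\ell})$. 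Your version is perfectly valid and arguably more informative about the transition matrix, while the paper's version is more economical and stays closer to what Theorem~\ref{thm:HL} asserts explicitly. Your closing remark about \eqref{conj:simple} not applying to the pair $(i_k,p_k),(i_\ell,p_\ell)$ is a correct and worthwhile observation; as you note, the reducibility here is supplied by \eqref{eq:minimal pair} rather than by the pole hypothesis.
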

\begin{proof}
  We will use induction on $ |\beta_j|$.
    When $ |\beta_j|=1$, the assertion follows from \eqref{eq:haut 1}.

  Assume that $|\beta_j| \geq 2$ and $(\beta_k, \beta_\ell)$ is a minimal pair for $\beta_j$ with $k < j <\ell$.
By induction hypothesis, we have $\mathcal F(S(\beta_k)) \simeq  L(Y_{i_k,p_k})$
and $\mathcal F(S(\beta_\ell)) \simeq L(Y_{i_\ell,p_\ell}) $.

By \eqref{eq:minimal pair} and \eqref{eq:simple image of rmat}, we have
\eqn
F^\upper(\beta_k) F^\upper(\beta_\ell) -q F^\upper(\beta_j) = [\hd(S(\beta_k) \circ S(\beta_\ell))]
 \eneqn
 under the isomorphism  $\K(R^J(\beta_j) \gmod) \isoto U^-_\A(\g_0)_{-\beta_j} $.
Applying $\tilde \Phi$, we obtain
\eq \label{eq:qchar of tensor}
 [L(Y_{i_k,p_k})  \otimes  L(Y_{i_\ell,p_\ell})] = [L]+ [L(Y_{i_j,p_j})]
\eneq
for some simple module $L$ in $\K(\CC_Q)$.
Since the module $L(Y_{i_k,p_k}) \otimes L(Y_{i_\ell,p_\ell})$
 should contain a simple subquotient isomorphic to $L(Y_{i_k,p_k}Y_{i_\ell,p_\ell})$,
 we conclude that
 $L \simeq L(Y_{i_k,p_k} Y_{i_\ell,p_\ell})$ and hence
  $L(Y_{i_k,p_k}) \otimes L(Y_{i_\ell,p_\ell})$ is not simple.

On the other hand,
applying $\F$ to \eqref{eq:minimal pair}, we obtain the following exact sequence:
\begin{align*}
\xymatrix@C=1pc{
   0 \ar[r]& \mathcal F(S(\beta_j)) \ar[r] & L(Y_{i_k,p_k}) \otimes L(Y_{i_\ell,p_\ell}) \ar[r]^-{\mathcal F(f)}&
 L(Y_{i_\ell,p_\ell}) \otimes L(Y_{i_k,p_k}) \ar[r] & \mathcal F(S(\beta_j)) \ar[r] & 0}.
    \end{align*}
Note that $\mathcal F(f)$ is non-zero. Indeed, if $\mathcal F(f)=0$, then we have
  $$ L(Y_{i_k,p_k}) \otimes L(Y_{i_\ell,p_\ell})  \simeq \F(S(\beta_j))
\simeq L(Y_{i_\ell,p_\ell}) \otimes L(Y_{i_k,p_k}) $$
so that
$L(Y_{i_k,p_k}) \otimes L(Y_{i_\ell,p_\ell})$ is irreducible, which is a contradiction.

By Lemma \ref{lem:p_k > p_l}, we  have $p_k > p_l$.
It follows that every non-zero homomorphism from $L(Y_{i_k,p_k}) \otimes L(Y_{i_\ell,p_\ell})$ to $L(Y_{i_\ell,p_\ell}) \otimes L(Y_{i_k,p_k})$ is a constant multiple of the normalized R-matrix.
Thus we have
$$\text{Im}(\mathcal F(f)) \simeq \hd(L(Y_{i_k,p_k}) \otimes L(Y_{i_\ell,p_\ell})) \simeq L(Y_{i_k,p_k}Y_{i_\ell,p_\ell}).$$
It follows that
$$[L(Y_{i_k,p_k}) \otimes L(Y_{i_\ell, p_\ell})]
     = [L(Y_{i_k,p_k} Y_{i_\ell,p_\ell})] + [\mathcal F (S(\beta_j))] \quad \text{in $\K(\CC_Q)$.}$$
Comparing with \eqref{eq:qchar of tensor}, we have
$[\mathcal F(S(\beta_j))] = [L(Y_{i_j,p_j})]$ and hence we conclude that
\begin{align*}
  \mathcal F(S(\beta_j)) \simeq L(Y_{i_j,p_j}),
\end{align*}
since $L(Y_{i_j,p_j})$ is simple.
\end{proof}

\begin{corollary}
  The ring homomorphism $ U^-_\A (\g_0)^{\vee} \rightarrow \mathcal \K(\CC_Q)$ induced  by the exact  functor
  \begin{align*}
\mathcal F \col \soplus_{n \ge 0}R^J(n)\gmod \To \CC_Q
  \end{align*}
  is the same as the homomorphism  $\widetilde{\Phi}$ in
{\rm Theorem \ref{thm:HL}}.
\end{corollary}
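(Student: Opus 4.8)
The plan is to realize both homomorphisms as $\A$-algebra maps out of $U^-_\A(\g_0)^\vee$ and to check that they agree on an explicit set of algebra generators, namely the dual root vectors $F^\upper(\beta_j)$. First I would make the induced homomorphism precise. By Theorem~\ref{thm:corresponding KLR} the Cartan matrix $A^J$ associated with $(J,s,X)$ is of finite type $\g_0$, hence symmetric, and the parameters $Q_{ij}(u,v)$ fixed in \eqref{eq:Q_omega} are polynomials in $u-v$; therefore Theorem~\ref{Thm:categorification} supplies an $\A$-algebra isomorphism $U^-_\A(\g_0)^\vee \isoto \K(R^J\gmod)$ carrying the upper global basis onto the classes of self-dual simple modules, and in particular carrying $F^\upper(\beta_j)$ to $[S(\beta_j)]$ for $1 \le j \le r$ by the definition of the dual root module. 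On the other hand $\F$ is exact, takes values in $\CC_Q$, and is a tensor functor by Theorem~\ref{thm:gQASW duality}(b), so it induces a ring homomorphism $\K(R^J\gmod) \to \K(\CC_Q)$; composing these maps gives the homomorphism $\phi_\F\col U^-_\A(\g_0)^\vee \to \K(\CC_Q)$ of the statement.

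Next I would check that $\phi_\F$ and $\widetilde\Phi$ are both $\A$-algebra homomorphisms with the same value at $q$: for $\widetilde\Phi$ this is part of Theorem~\ref{thm:HL} (where $\widetilde\Phi(q)=1$), while for $\phi_\F$ the relation $\F(qM)\simeq\F(M)$ forces $\phi_\F(q)=1$. Then, invoking Theorem~\ref{thm:dual pbw generators}, for every $j$ with $\phi(i_j,p_j)=(\beta_j,0)$ we get
\[ \phi_\F(F^\upper(\beta_j)) = [\F(S(\beta_j))] = [L(Y_{i_j,p_j})] = \widetilde\Phi(F^\upper(\beta_j)), \]
the last equality being the defining formula of $\widetilde\Phi$ in Theorem~\ref{thm:HL}. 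Finally, since $F^\upper(\bfc) = F^\upper(\beta_1)^{c_1}\cdots F^\upper(\beta_r)^{c_r}$ and $P^\upper_{\bf i}$ is an $\A$-basis of $U^-_\A(\g_0)^\vee$, the set $\{F^\upper(\beta_j)\}_{1\le j\le r}$ generates $U^-_\A(\g_0)^\vee$ as an $\A$-algebra, so two $\A$-algebra homomorphisms agreeing on it must coincide; hence $\phi_\F=\widetilde\Phi$.

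I do not expect a genuinely hard step here; the points that need care are all bookkeeping. One must make sure that $\phi_\F$ is a ring homomorphism, which uses the tensor-functor structure of $\F$ rather than merely its exactness; that $\phi_\F$ is $\A$-linear, via $\F(qM)\simeq\F(M)$; and that Theorem~\ref{thm:dual pbw generators} indeed covers every index $j=1,\dots,r$, and therefore the full generating set $\{F^\upper(\beta_j)\}$. Once these routine verifications are in place the corollary follows at once.
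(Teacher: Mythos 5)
Your proposal is correct and follows essentially the same approach as the paper: both appeal to Theorem~\ref{thm:dual pbw generators} to see that the two homomorphisms agree on the dual root vectors $F^\upper(\beta_j)$, and then conclude by the fact that these generate $U^-_\A(\g_0)^\vee$ as an $\A$-algebra. Your write-up merely spells out more of the bookkeeping (tensor-functor property, $\A$-linearity via $\F(qM)\simeq\F(M)$) that the paper leaves implicit.
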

\begin{proof}
Theorem \ref{thm:dual pbw generators}  asserts
that two homomorphisms coincide on the set of dual root vectors.
 Since the dual root vectors generate $U^-_\A(\g_0)^\vee$, our assertion follows immediately.
  \end{proof}

Since $\widetilde \Phi$ sends the upper global basis to the set of isomorphism classes of simple modules in $\CC_Q$ (\cite{HL11}), and the upper global basis corresponds to the set of isomorphism classes of self-dual simple $R^J(n)$-modules (\cite{VV09}),  we obtain the following result.

\begin{corollary} \label{cor:simple to simple}
 The functor $\mathcal F_n \col R^J(n) \gmod \rightarrow \CC_Q$ sends the  simple modules to the simple modules.
\end{corollary}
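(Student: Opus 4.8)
The plan is to derive the corollary formally, with no new computation, from the identification (in the preceding corollary) of the ring homomorphism induced by $\F$ with $\widetilde\Phi$, together with the two categorification theorems. First I would reduce to self-dual simple modules: since $\F(qM)\simeq\F(M)$ for every $M\in R^J(n)\gmod$, and every simple graded $R^J(n)$-module becomes self-dual after a suitable grading shift, it is enough to prove that $\F(S)$ is simple whenever $S$ is a self-dual simple object of $R^J(n)\gmod$.

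Next I would apply Theorem~\ref{Thm:categorification}: the Cartan matrix $A^J$ is symmetric and, by \eqref{eq:Q_omega}, each $Q_{ij}(u,v)$ is a polynomial in $u-v$, so under the isomorphism $U^-_\A(\g_0)^\vee\isoto\K(R^J\gmod)$ the class $[S]$ of a self-dual simple module corresponds to an element $b$ of the upper global basis $\B^\upper$. Since $\F$ is exact and lands in $\CC_Q$, it induces a ring homomorphism $\K(R^J\gmod)\to\K(\CC_Q)$ with $[S]\mapsto[\F(S)]$, and by the preceding corollary this homomorphism is $\widetilde\Phi$; hence $[\F(S)]=\widetilde\Phi(b)$ in $\K(\CC_Q)$. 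By Theorem~\ref{thm:HL}, $\widetilde\Phi$ carries $\B^\upper$ bijectively onto the set of isomorphism classes of simple objects of $\CC_Q$, so $\widetilde\Phi(b)=[L]$ for a simple module $L$ in $\CC_Q$.

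The final step is to promote the equality $[\F(S)]=[L]$ in the Grothendieck group to an isomorphism $\F(S)\simeq L$. For this I would use that $\CC_Q$ is a Serre subcategory of $\CC_\g$ all of whose objects have finite length, so that the classes of the simple objects form a $\Z$-basis of $\K(\CC_Q)$; as $[L]$ is the class of one simple object, $\F(S)$ must have a composition series of length one, and therefore $\F(S)\simeq L$. I do not anticipate a genuine obstacle here: the argument is entirely a matter of assembling the earlier results. The only points that need a little care are the self-duality reduction in the first step (checking that the grading-shift ambiguity is exactly absorbed by $\F(qM)\simeq\F(M)$, so that Theorem~\ref{Thm:categorification} may be applied to the self-dual representative) and the Grothendieck-group basis argument in the last step.
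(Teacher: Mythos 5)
Your proof is correct and follows the same route as the paper: it combines Theorem~\ref{Thm:categorification} (self-dual simples correspond to the upper global basis), the preceding corollary (the functor induces $\widetilde\Phi$), and Theorem~\ref{thm:HL} ($\widetilde\Phi$ carries $\B^\upper$ to simple classes), which is exactly what the paper cites. You have merely spelled out the two bookkeeping steps the paper leaves implicit — the reduction to a self-dual representative via $\F(qM)\simeq\F(M)$, and the passage from an equality of classes in $\K(\CC_Q)$ to an isomorphism using that simple classes form a $\Z$-basis.
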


\newcommand{\vpi}[2][]{V(\varpi_{#2})_{#1}}
\appendix
\section{The denominators of normalized $R$-matrices of type $D_n^{(1)}$ } \label{appen:simple poles}

\subsection{Affine  Kac-Moody algebra  of type $D_n^{(1)}$}
In this appendix, we will calculate the denominators of the normalized $R$-matrices between the fundamental representations of $U'_{q}(D_n^{(1)})$. We will use $\C(q)$ as the base field $\cor$.
Let $\g$ be the affine Kac-Moody algebra of type $D_n^{(1)}$ ($n \ge 4$).
The Dynkin diagram of $\g$ is given as follows:
\eq&&\ba{c}
\xymatrix@R=2ex@C=5ex{
*{\circ}<3pt> \ar@{-}[dr]^<{0}
&&&&&&&*{\circ}<3pt>
\ar@{-}[dl]^<{n-1}\\
&*{\circ}<3pt> \ar@{-}[r]_<{2} & *{\circ}<3pt> \ar@{-}[r]_<{3}
& {} \ar@{.}[r]&{} \ar@{-}[r]_>{\,\,\,\,n-3}
& *{\circ}<3pt> \ar@{-}[r]_>{n-2\,\,\,} &*{\circ}<3pt>\\
*{\circ}<3pt> \ar@{-}[ur]_<{1}&&&&&&&
*{\circ}<3pt> \ar@{-}[ul]_<{n}
}\ea
\label{Dynkin D}
\eneq
The weight lattice $P_\cl$ and  the dual weight lattice
$P_\cl^\vee$ for $\uqpg$ is given by \eqn P_\cl = \soplus_{i = 0}^n
\Z \Lambda_i, \quad P_\cl^\vee =  \soplus_{i = 0}^n \Z h_i. \eneqn
The null root and the center is given by
\eq &&\delta=\al_0+\al_1+2(\al_2+\cdots +\al_{n-2})+\al_{n-1}+\al_n,\\
&&c=h_0+h_1+2(h_2+\cdots +h_{n-2})+h_{n-1}+h_n,
\eneq
and $$\lan c,\rho\ran=2n-2.$$
Set $\Po=\set{\la\in P_\cl}{\lan c,\la\ran=0}$.
Then $\Po$ has a basis
$\vp_i$ ($1\le i\le n$)
where
$$\vp_i=\bc
\Lambda_i-\Lambda_0&\text{if $i=1,n-1,n$,}\\
\Lambda_i-2\Lambda_0&\text{if $2\le i\le n-2$.}
\ec$$
For an orthonormal basis $\{\epsilon_1, \ldots, \epsilon_n \}$
of $\Q \tens \Po$ with respect to the bilinear form induced from $(\cdot,
\cdot)$ on $P$,  we have
\begin{align*}
\alpha_i &=
\bc\epsilon_i -\epsilon_{i+1} & (1 \leq i \leq n-1), \\
   \epsilon_{n-1} + \epsilon_n & (i=n),\ec\\
\vp_i &=\bc
 \epsilon_1 + \cdots + \epsilon_i & (1 \leq i \leq n-2), \\
  \dfrac{1}{2}(\epsilon_1 + \cdots + \epsilon_{n-1} -\epsilon_n) & (i=n-1),\\[1ex]
  \dfrac{1}{2}(\epsilon_1 + \cdots + \epsilon_{n-1} + \epsilon_n) & (i=n).
  \ec
 \end{align*}
Let $\g_0$ be the
 Lie subalgebra of $\g$
whose Dynkin diagram is obtained by removing the $0$ vertex from
the Dynkin diagram of $\g$.
Then $\g_0$ is a simple Lie algebra of type $D_n$.
We may regard $\Po$ as a weight lattice of $\g_0$.
Let $W_0$ be the Weyl group of $\g_0$ and $w_0$ the longest element of $W_0$.

Let  $^*$ be the  automorphism of the Dynkin diagram
of $D_n$ arising from the action of the longest element $w_0 \in W_0$ given by
  $\varpi_{i^*}=-w_0 \varpi_i$ for $i \in I_0$.
  Then we have
$i^* = i$ for $ 1 \le i\le n-2$, and $(n-1)^* = n-1$, $n^*=n$ if $n$ is even,
$(n-1)^*=n$, $n^*=n-1$ if $n$ is odd.
For a finite-dimensional $\uqpg$-module $M$, we denote by $M^*$  the left dual of $M$
and by $^*M$ the
right dual of $M$; i.e., we have the following $\uqpg$-module homomorphisms:
\begin{align*}
M^* \otimes M \stackrel{{\rm tr}}{\longrightarrow} \cor,
\quad \cor \stackrel{\iota}{\longrightarrow} M \otimes M^* \quad \text{and} \quad
M \otimes {}^*M \stackrel{{\rm tr}}{\longrightarrow} \cor,
\quad \cor \stackrel{\iota}{\longrightarrow} {}^*M \otimes M.
\end{align*}
For example, we have
\begin{align*}
 \vpi{i}{}^* \simeq\vpi[q^{-2n+2}]{i^*}\quad \text{and} \quad
 ^*\vpi{i}{} \simeq\vpi[q^{2n-2}]{i^*}.
\end{align*}
We denote the denominator $d_{\vpi{i}{},\vpi{j}{}}(z)$
of $\Rnorm_{\vpi{i}{},\vpi{j}{}}(z)$ by $d_{i,j}(z)$.
By \cite[(A.6)]{AK} and
\cite[Proposition 6.15]{FM01} (see also \cite{Chari}),
we have
\eq
d_{i,j}(z)=d_{j,i}(z).\label{eq:symd}
\eneq

Our main result in this appendix is the following theorem.
\Th\label{th:denomD}
For $n\ge 4$, we have
\eqn
&&d_{k,l}(z) =\prod_{s=1}^{\min (k,l)}
(z-(-q)^{|k-l|+2s})(z-(-q)^{2n-k-l-2+2s}) \\
&&\hs{6.7ex}=\hs{-2ex}\prod_{\substack{|k-l|+2\le s\le k+l\\
s\equiv k+l\;\mathrm{mod}\,2}}\hs{-3ex}(1-(-q)^s)
\hs{-2ex}\prod_{\substack{2n-k-l\le t\le 2n-|k-l|-2\\
t\equiv k+l\;\mathrm{mod}\,2}}\hs{-3ex}(1-(-q)^t)\qquad\text{for $1\le k,l\le n-2$,}\\
&&d_{k,n-1}(z)=d_{k,n}(z) = \prod_{s=1}^{k}(z-(-q)^{n-k-1+2s})
\quad\text{for $1\le k\le n-2$,}\\
&&d_{k,l}(z)=\prod_{\substack{1\le s\le n-1,\; s\equiv k-l+1\,\mathrm{mod}\,2}}
(z-(-q)^{2s})\quad\text{for $n-1\le k,l\le n$.}
\eneqn
\enth

\subsection{Vector representation}
Let $V = \big(\soplus_{j=1}^n \C(q) v_j)
\oplus \big(\soplus_{j=1}^n \C(q) v_{\ol{j}})$
be a $(2n)$-dimensional vector space and let $B=\{1,2,\ldots, n, \bar n, \ldots, \bar 2, \bar 1\}$ be the
index set for the basis of $V$ with an ordering given by
$$1 \prec 2 \prec \cdots \prec n,\quad
\ol n \prec \cdots \prec \ol 2 \prec \ol 1,\quad n-1\prec \ol n,\ n\prec\ol{n-1}.$$
We define the $\uqpg$-module action on $V$ as follows:
\begin{align*} \allowbreak
  q^h v_j &= q^{\langle h, \wt(v_j) \rangle} v_j
  \quad \text{for $h \in (P_\cl)^\vee$ and $j \in B$,} \\ \allowbreak
  e_i v_j &=\begin{cases}
    v_i & \text{if $j= i+1$ and $i \neq 0,n$, }\\
    v_{\overline{i+1}} & \text{if $j= \overline{i}$ and $i \neq 0,n$, }\\
    v_n & \text{if $j= \ol{n-1}$ and $i = n$,}\\
       v_{n-1} & \text{if $j= \ol{n}$ and $i = n$,}\\
        v_{\ol{1}} & \text{if $j= 2$ and $i = 0$,}\\
         v_{\ol{2}} & \text{if $j= 1$ and $i = 0$,}\\
    0 & \text{otherwise},
  \end{cases} \\ \allowbreak
    f_i v_j &=\begin{cases}
    v_{i+1} & \text{if $j= i$ and $i \neq 0,n$,}\\
    v_{\overline{i}} & \text{if $j= \overline{i+1}$ and $i \neq0, n$,} \\
    v_{\overline{n}} & \text{if $j= n-1$ and $i = n$,}\\
    v_{\overline{n-1}} & \text{if $j= n$ and $i = n$,}\\
    v_{1} & \text{if $j= \ol{2}$ and $i = 0$,}\\
    v_{2} & \text{if $j= \ol{1}$ and $i = 0$,}\\
    0 & \text{otherwise},
  \end{cases}
 \end{align*}
where
\begin{align*}
  \wt(v_j) = \epsilon_j, \ \wt(v_{\overline j}) = -\epsilon_j & \quad
  \text{for $j =1,2, \ldots, n$.}
\end{align*}
Then $V$ is an integrable $\uqpg$-module,
called the {\em vector representation}. It is isomorphic to $V(\varpi_1)$
with a crystal graph
$$\xymatrix@C=7ex@R=3ex{
&2\ar[r]^-{2}&3\ar[r]^-{3}&\cdots\cdots\ar[r]
&n-2\ar[r]^-{n-2}&n-1\ar[dl]^{n-1}\ar[dr]^n\\
1\ar[ur]^-{1}&&\ol{1}\ar[ul]^-{0}&&n\ar[dr]^-{n}&&\ol{n}\ar[dl]^-{n-1}\\
&\ol{2}\ar[ul]^-{0}\ar[ur]^-{1}&
\ol{3}\ar[l]^-{2}&\cdots\cdots\cdots\ar[l]^-{3}&\ol{n-2}\ar[l]
&\ol{n-1}\ar[l]^-{n-2}
}
$$
Following \cite{DO96}, we recall the explicit form of the normalized $R$-matrix
\begin{align*}
  \Rnorm_{V(\varpi_1),V(\varpi_1)}(z) \col
   V(\varpi_1) \otimes V(\varpi_1)_z \rightarrow V(\varpi_1)_z \otimes V(\varpi_1)
\end{align*}
given by
\begin{align} \label{eq:rmatrix 11}
  \Rnorm_{V(\varpi_1),V(\varpi_1)}(z)(v_k \otimes v_\ell) =
  \begin{cases}
   v_k \otimes v_\ell & \text{if} \ k=\ell, \\[1.5ex]
   \dfrac{(1-q^2) z^{\delta(k \succ \ell)}}{z-q^2} (v_k \otimes v_\ell)
   + \dfrac{q (z-1)}{z-q^2} (v_\ell \otimes v_k)
    & \text{if} \ k \neq \ell, \overline \ell\\[3ex]
\dfrac{1}{(z-q^2)(z-q^{2n-2})} \sum_{j \in B} c_{jk}(z) (v_j \otimes v_{\overline j})
    & \text{if} \ k = \overline \ell,\\
  \end{cases}
\end{align}
where
\begin{align*}
  &c_{ij}(z) =
  \begin{cases}
    (q^2z-q^{2n-2})(z-1)
    & \text{if} \ i =j, \\
    (1-q^2)z(q^{|j|-|i|}(1-z) +\delta_{i,\overline j}(z-q^{2n-2})) & \text{if} \ i \prec j, \\
    (1-q^2)(q^{2n-2}q^{|j|-|i|}(1-z) +\delta_{i,\overline j}(z-q^{2n-2})) & \text{if} \ i \succ j, \\
  \end{cases} \\
& \text{and} \
|j|=\begin{cases}
  j & \text{if} \ j=1, 2, \ldots, n \\
  n-j & \text{if} \ j=\overline 1, \overline 2,\ldots, \overline n.
\end{cases}
\end{align*}
Here and in the sequel, we set
$$\ol{j}=k\quad\text{if $j=\ol{k}$ for $k=1,\dots,n$.}$$
Observe that the denominator $d_{1,1}(z) \seteq d_{\vpi{1}{},\vpi{1}{}}(z)$ of $\Rnorm_{V(\varpi_1),V(\varpi_1)}(z)$ is given by
\begin{align} \label{eq:deno11}
  d_{1,1}(z)=(z-q^2)(z-q^{2n-2}).
\end{align}
For each $k \geq 1$, set
\begin{align*}
 V^{\otimes (k)} &\seteq V_{(-q)^{1-k}} \otimes V_{(-q)^{3-k}} \otimes \cdots \otimes V_{(-q)^{k-3}} \otimes V_{(-q)^{k-1}}, \\
  \overline V^{\otimes (k)} &\seteq V_{(-q)^{k-1}} \otimes V_{(-q)^{k-3}} \otimes \cdots \otimes V_{(-q)^{3-k}} \otimes V_{(-q)^{1-k}},
\end{align*}
and define a $\uqpg$-module homomorphism
$$T^{(k)} \col V^{\otimes {(k)}} \rightarrow \overline V^{\otimes (k)}$$
by
$$T^{(k)} =\bl R_{1}(q^2) \circ \cdots \circ R_{k-1}(q^{2k-2})\br \circ \cdots
\circ \bl R_{1}(q^2) \circ R_{2}(q^{4})\br \circ \bl R_{1}(q^2)\br,$$
where
$R_i (x_{i+1}/x_i)$
denotes the homomorphism
\eqn
V_{x_1} \otimes \cdots \otimes V_{x_{i-1}} \otimes
d_{1,1}(z) \Rnorm_{V(\varpi_1),V(\varpi_1)}(z)|_{z=x_{i+1}/x_i}
\otimes
V_{x_{i+2}} \otimes \cdots \otimes V_{x_r}: \\
 V_{x_1} \otimes \cdots \otimes V_{x_i}
 \otimes V_{x_{i+1}} \otimes \cdots \otimes V_{x_r} \longrightarrow
 V_{x_1} \otimes \cdots \otimes V_{x_{i+1}}
 \otimes V_{x_i} \otimes \cdots \otimes V_{x_r}.
 \eneqn

\begin{prop}[{ \cite[Proposition 4.1-4.3]{DO96}}] \label{prop:fusion construction}
Let $2 \leq k \leq n-2$ and set
$W=\on{Im} \Rnorm_{V(\varpi_1),V(\varpi_1)}(q^{-2})\subset \ol{V}^{\otimes (2)}$.
  \bna
\item We have
\begin{align} \label{eq:kernel Tk}
  \Ker T^{(k)} = \sum_{j=0}^{k-2} \bl V^{\otimes(j)}\br_{(-q)^{j-k}}
  \otimes W_{(-q)^{2j+2-k}} \otimes
  \bl V^{\otimes(k-2-j)}\br_{(-q)^{j+2}}.
\end{align}
 \item The $\uqpg$-module $\Im (T^{(k)})$
 is isomorphic to
 the fundamental representation $V(\varpi_k)$ of weight $\varpi_k$.
  \ee
\end{prop}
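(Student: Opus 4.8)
\emph{Outline.} The morphism $T^{(k)}$ is the intertwiner attached to the ``quantum half--twist'': read from right to left, its blocks appear in the order $R_1,R_2,R_1,R_3,R_2,R_1,\dots$, a reduced word for the longest element of $\mathfrak S_k$. Since the spectral parameters of $V^{\otimes(k)}$ form a geometric progression of ratio $(-q)^2=q^2$, the spectral ratio involved each time two fixed tensor factors are interchanged is $q^2$. As $d_{1,1}(q^2)=0$ by \eqref{eq:deno11} while $\Rnorm_{V(\varpi_1),V(\varpi_1)}(z)$ has only a simple pole at $z=q^2$ (visible in \eqref{eq:rmatrix 11}), each block $R_i(q^2)$ is a well--defined but \emph{non}--invertible $\uqpg$--intertwiner, whereas all the other blocks occurring — of arguments $q^4,\dots,q^{2(k-1)}$, which for $2\le k\le n-2$ lie strictly below $q^{2n-2}$, hence away from the zeros of $d_{1,1}$ — are invertible. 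The plan is: \emph{(1)} identify $\Ker R_i(q^2)$ with $W$; \emph{(2)} deduce the inclusion ``$\supseteq$'' in \eqref{eq:kernel Tk} by propagating these local kernels through the braid relations of the renormalized blocks; \emph{(3)} prove the equality \eqref{eq:kernel Tk} and assertion (b) simultaneously, by a dimension count together with the characterization of $V(\varpi_k)$ recalled in \S\ref{subec:quantum affine}.

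\emph{Steps (1)--(2).} Normalized $R$--matrices are unitary, $\Rnorm_{V(\varpi_1),V(\varpi_1)}(z^{-1})\circ\Rnorm_{V(\varpi_1),V(\varpi_1)}(z)=\mathrm{id}$; multiplying by $d_{1,1}(z)$ and letting $z\to q^2$ gives $R_i(q^2)\circ\Rnorm_{V(\varpi_1),V(\varpi_1)}(q^{-2})=d_{1,1}(q^2)\,\mathrm{id}=0$, whence $W=\Im\Rnorm_{V(\varpi_1),V(\varpi_1)}(q^{-2})\subseteq\Ker R_i(q^2)$; the reverse inclusion follows by a rank count in the explicit matrix \eqref{eq:rmatrix 11} (on each block $\langle v_k\otimes v_\ell,v_\ell\otimes v_k\rangle$ with $k\ne\ell,\overline\ell$ the map $R_i(q^2)$ has rank one, and on the $v_j\otimes v_{\overline j}$--sector its corank equals that of $\Rnorm_{V(\varpi_1),V(\varpi_1)}(q^{-2})$), so $\Ker R_i(q^2)=W$. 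For (2), fix $0\le j\le k-2$ and a vector $w$ in the $j$--th summand of \eqref{eq:kernel Tk}, so that $W$ occupies the tensor slots $j+1,j+2$ (the spectral shifts in \eqref{eq:kernel Tk} are arranged precisely so that these two factors of $V^{\otimes(k)}$ match those of the appropriate twist of $W$). In the braid word realizing $T^{(k)}$ the strands starting in slots $j+1$ and $j+2$ are interchanged exactly once, with spectral ratio $q^2$ at that step; using the Yang--Baxter and commutation relations for the blocks $R_i$ one may rewrite $T^{(k)}$ so that this interchange is performed by the block $R_{j+1}(q^2)$ acting on slots $j+1,j+2$ with $W$ still sitting there, and by (1) it kills $w$. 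Hence the right--hand side of \eqref{eq:kernel Tk} is contained in $\Ker T^{(k)}$. (One may instead run this as an induction on $k$, using $T^{(k)}=\Xi_k\circ(T^{(k-1)}\otimes\mathrm{id}_V)$ with $\Xi_k$ the transport of the last strand to the front, whose only non--invertible constituent is a single block $R(q^2)$.)

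\emph{Step (3).} Restricting the construction along $U_q(\g_0)\subset\uqpg$, the blocks $R_i(q^2)$ become (up to scalars) the $q$--antisymmetrizations on adjacent slots, so $T^{(k)}$ becomes the $q$--antisymmetrization on $V^{\otimes k}$ and $\Im T^{(k)}$ is the $q$--exterior power $\Lambda^k_q V$; since $\Lambda^\bullet_q V$ is a flat deformation of $\Lambda^\bullet(\C^{2n})$ with relations generated by $W$, we get $\dim\Im T^{(k)}=\binom{2n}{k}$ and $\dim\bigl(V^{\otimes(k)}\big/(\text{RHS of \eqref{eq:kernel Tk}})\bigr)=\binom{2n}{k}$. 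Therefore $\dim(\text{RHS of \eqref{eq:kernel Tk}})=(2n)^k-\binom{2n}{k}=\dim\Ker T^{(k)}$, which combined with the inclusion of (2) proves \eqref{eq:kernel Tk}. Finally $\Im T^{(k)}$ is a nonzero integrable $\uqpg$--module: it contains the image of $v_1\otimes\cdots\otimes v_k$, a vector of weight $\epsilon_1+\cdots+\epsilon_k=\varpi_k$ which is nonzero because by \eqref{eq:rmatrix 11} the fully swapped monomial $v_k\otimes\cdots\otimes v_1$ occurs in it with nonzero coefficient; this vector generates $\Im T^{(k)}$ already over $U_q(\g_0)$, its $\varpi_k$--weight space is one--dimensional, and $\dim\Im T^{(k)}=\binom{2n}{k}=\dim V(\varpi_k)$. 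By the uniqueness in the characterization of $V(\varpi_i)$ recalled in \S\ref{subec:quantum affine}, $\Im T^{(k)}\simeq V(\varpi_k)$, which is assertion (b).

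\emph{Main obstacle.} The delicate part is the ``unique interchange'' bookkeeping in step (2): making precise, for the particular reduced word realizing $T^{(k)}$ and with all relevant ratios equal to $q^2$, that after the braid moves the block $R_{j+1}(q^2)$ genuinely acts on slots $j+1,j+2$ while $W$ is still in those slots — together with the flatness/Koszulity input for $\Lambda^\bullet_q V$ used in step (3). Everything else reduces to general $R$--matrix theory (unitarity, Yang--Baxter), a reading of the matrix \eqref{eq:rmatrix 11}, or the classical $\mathfrak{so}_{2n}$ fact that $\Lambda^k(\C^{2n})\cong V(\varpi_k)$ for $k\le n-2$.
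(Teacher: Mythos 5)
Steps (1) and (2) of your outline are fine: the unitarity identity gives $W\subseteq\Ker R_1(q^2)$, a rank count closes the inclusion, and moving to a reduced word of $w_0\in\mathfrak S_k$ beginning with $s_{j+1}$ places the unique non-invertible block $R_{j+1}(q^2)$ at the right end, so the $j$-th summand of \eqref{eq:kernel Tk} is killed. The problem is step (3), and it is not a ``delicate point'' but a genuine error. First, $R_i(q^2)$ restricted to $U_q(\g_0)$ is \emph{not} the type-$\gl_{2n}$ $q$-antisymmetrizer: on the zero-weight sector \eqref{eq:rmatrix 11} gives $R(q^2)(v_k\otimes v_{\ol k})=\sum_{j\in B}c_{jk}(q^2)\,v_j\otimes v_{\ol j}$ with nonzero off-diagonal $c_{jk}(q^2)$, so this image is not proportional to $v_k\otimes v_{\ol k}-q^{\pm1}v_{\ol k}\otimes v_k$; the orthogonal $R$-matrix carries a contraction term that is simply absent from the $\gl$-picture, and $\Im T^{(k)}$ is not your $\Lambda^k_q V$.

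Second, and decisively, the dimension count is false. For $U_q'(D_n^{(1)})$ and $2\le k\le n-2$ the fundamental module $V(\varpi_k)$ is \emph{not} irreducible over $U_q(\g_0)$; its restriction is $\bigoplus_{j\ge 0}V(\varpi_{k-2j})$ (with $\varpi_0=0$), so $\dim V(\varpi_k)=\sum_{j\ge 0}\binom{2n}{k-2j}>\binom{2n}{k}$. Already for $k=2$ one has $\dim V(\varpi_2)=\binom{2n}{2}+1$, hence $\dim W=\binom{2n+1}{2}-1$, and $T(V)/(W)$ cannot be a flat deformation of $\Lambda^\bullet\C^{2n}$. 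Your flatness assertion $\dim\bigl(V^{\otimes(k)}/\text{RHS of \eqref{eq:kernel Tk}}\bigr)=\binom{2n}{k}$ would in fact \emph{contradict} your own step (2), since that quotient surjects onto $\Im T^{(k)}$ and so has dimension at least $\dim V(\varpi_k)$. Your appeal to the uniqueness characterization of $V(\varpi_i)$ also checks only conditions (2) and (4). For the record, the paper itself gives no proof: the proposition is quoted from Davies--Okado \cite{DO96}, whose argument is a direct computation of $\Ker T^{(k)}$ and of the $U_q(\g_0)$-character of $\Im T^{(k)}$ from the explicit matrix \eqref{eq:rmatrix 11}, using the correct (reducible) classical decomposition of $V(\varpi_k)$; the perfect crystals of \cite{KMN2} give the same dimensions.
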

Let us denote by $v_{[i_1,i_2,\ldots, i_k]} \in V(\varpi_k)$ the image of
 $v_{i_1} \otimes v_{i_2} \otimes \cdots \otimes v_{i_k}$ under the projection
 $V^{\otimes (k)} \twoheadrightarrow \vpi{k}{}$.

By \eqref{eq:kernel Tk}, we have
$$(\Ker T^{(k)})_{(-q)^{-\ell}} \otimes \bl V^{\otimes (\ell)}\br _{(-q)^k}
+\bl V^{\otimes (k)}\br_ {(-q)^{-\ell}}\tens (\Ker T^{(\ell)})_{(-q)^{k}}
\subset \Ker T^{(k+\ell)}.$$
Hence we obtain
\begin{corollary} \label{cor:k+l to k tensor l}
For $1 \leq k,\ell\leq n-2$ such that $k+\ell\le n-2$,
there exists a surjective $\uqpg$-module homomorphism
\begin{align}
  p_{k,\ell} \col V(\varpi_k)_{(-q)^{-\ell}} \otimes V(\varpi_\ell)_{(-q)^{k}}
  \epito V(\varpi_{k+\ell}).
  \end{align}
  Taking dual, we have an injective $\uqpg$-module homomorphism
\begin{align}
  \iota_{k,\ell} \col \vpi{k+\ell}{}  \mono \vpi[(-q)^{\ell}]{k}
  \otimes \vpi[(-q)^{-k}]{\ell}.
  \end{align}
\end{corollary}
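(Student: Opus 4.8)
The plan is to produce $p_{k,\ell}$ by factoring the fusion homomorphism $T^{(k+\ell)}$ through a quotient, exploiting the kernel inclusion displayed just before the statement, and then to obtain $\iota_{k,\ell}$ by dualizing. The first thing to set up is the spectral-parameter bookkeeping: since $V^{\otimes(m)}=V_{(-q)^{1-m}}\otimes\cdots\otimes V_{(-q)^{m-1}}$, concatenating $(V^{\otimes(k)})_{(-q)^{-\ell}}=V_{(-q)^{1-k-\ell}}\otimes\cdots\otimes V_{(-q)^{k-1-\ell}}$ with $(V^{\otimes(\ell)})_{(-q)^{k}}=V_{(-q)^{k+1-\ell}}\otimes\cdots\otimes V_{(-q)^{k+\ell-1}}$ reproduces $V^{\otimes(k+\ell)}=V_{(-q)^{1-k-\ell}}\otimes\cdots\otimes V_{(-q)^{k+\ell-1}}$ on the nose, so throughout I would identify $(V^{\otimes(k)})_{(-q)^{-\ell}}\otimes(V^{\otimes(\ell)})_{(-q)^{k}}$ with $V^{\otimes(k+\ell)}$ as $\uqpg$-modules.

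Next I would twist the surjection $V^{\otimes(k)}\epito \Im T^{(k)}\simeq V(\varpi_k)$ of Proposition~\ref{prop:fusion construction}(b) (trivially true for $k=1$) by the spectral shift $(-q)^{-\ell}$; as twisting by an automorphism is exact, this gives a surjection $(V^{\otimes(k)})_{(-q)^{-\ell}}\epito V(\varpi_k)_{(-q)^{-\ell}}$ with kernel $(\Ker T^{(k)})_{(-q)^{-\ell}}$, and similarly for $\ell$ with the shift $(-q)^{k}$. Tensoring the two over the field $\cor$, where $\otimes$ is exact, yields a surjection $V^{\otimes(k+\ell)}\epito V(\varpi_k)_{(-q)^{-\ell}}\otimes V(\varpi_\ell)_{(-q)^{k}}$ whose kernel is $(\Ker T^{(k)})_{(-q)^{-\ell}}\otimes(V^{\otimes(\ell)})_{(-q)^{k}}+(V^{\otimes(k)})_{(-q)^{-\ell}}\otimes(\Ker T^{(\ell)})_{(-q)^{k}}$. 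By the inclusion displayed before the statement this kernel is contained in $\Ker T^{(k+\ell)}$; since $\Im T^{(k+\ell)}\simeq V(\varpi_{k+\ell})$ by Proposition~\ref{prop:fusion construction}(b) --- and here the hypothesis $k+\ell\le n-2$ is used, so that $V(\varpi_{k+\ell})$ is a vector-type fundamental module to which the proposition applies --- the map $T^{(k+\ell)}$ descends to the desired surjective $\uqpg$-homomorphism $p_{k,\ell}\col V(\varpi_k)_{(-q)^{-\ell}}\otimes V(\varpi_\ell)_{(-q)^{k}}\epito V(\varpi_{k+\ell})$.

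For $\iota_{k,\ell}$ I would apply the contravariant exact left-dual functor $(-)^*$ on $\CC_\g$ to the surjection $p_{\ell,k}$ obtained by interchanging $k$ and $\ell$, and then read off the result using $(M\otimes N)^*\simeq N^*\otimes M^*$, the duality formula $V(\varpi_i)^*\simeq V(\varpi_i)_{q^{-2n+2}}$ recalled above (valid here since $i^*=i$ for $1\le i\le n-2$ and $k,\ell,k+\ell\le n-2$), and the compatibility of $(-)^*$ with spectral-parameter shift; a single global spectral shift then normalizes this to the stated injection $\iota_{k,\ell}\col V(\varpi_{k+\ell})\mono V(\varpi_k)_{(-q)^{\ell}}\otimes V(\varpi_\ell)_{(-q)^{-k}}$.

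I expect the only place calling for genuine care to be the spectral-parameter tracking --- both in the identification $(V^{\otimes(k)})_{(-q)^{-\ell}}\otimes(V^{\otimes(\ell)})_{(-q)^{k}}\simeq V^{\otimes(k+\ell)}$ and through the duality in the last step; everything else is a formal consequence of Proposition~\ref{prop:fusion construction} together with the exactness of tensoring over $\cor$ and of the left-dual functor.
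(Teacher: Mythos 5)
Your proposal is correct and fills in exactly the steps the paper leaves implicit: the identification $(V^{\otimes(k)})_{(-q)^{-\ell}}\otimes(V^{\otimes(\ell)})_{(-q)^{k}}\simeq V^{\otimes(k+\ell)}$ as a concatenation of evaluated vector representations, the kernel inclusion from Proposition~\ref{prop:fusion construction}(a), factoring $T^{(k+\ell)}$ through the quotient, and dualizing. You are also careful to dualize $p_{\ell,k}$ rather than $p_{k,\ell}$ so that the spectral parameters come out as stated, a small point the paper glosses over; this is the same route the paper intends, just spelled out explicitly.
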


\medskip

\subsection{Spin representations}

Let $V_{\rm sp}^{(+)}$ be the $\cor$-vector space with basis
\begin{align}
  B_{{\rm sp}}^{(+)} = \{ (m_1, \ldots, m_n) \, ; \, m_i = + \, \text{or} \, - , \ m_1 \cdots m_n = +\},
\end{align}
and
set $V_{\rm sp}^{(-)}$ be the $\cor$-vector space with basis
\begin{align}
  B_{{\rm sp}}^{(-)} = \{ (m_1, \ldots, m_n) \, ; \, m_i = + \, \text{or} \, - ,\ m_1 \cdots m_n =-\},
\end{align}
respectively. Define the $\uqpg$-actions on $V_{\rm sp}^{(\pm)}$ as follows:
for $v=(m_1,\ldots,m_n)$ and $i\in I$,
\begin{align*}
  q^{h} v &= q^{\langle h , \wt(v)\rangle} v
  \quad \text{for $h\in(P_\cl)^\vee$, where
  $\wt(v) = \frac{1}{2} \sum_{k=1}^n m_k \epsilon_k$,} \\
  e_iv &= \begin{cases}
    (m_1,\ldots, \overset{i}{+}, \overset{i+1}{-}, \ldots, m_n)
    &\text{if $i\not=0,n$ and $m_i=- $, $m_{i+1}=+$,} \\
    (m_1,\ldots, \overset{n-1}{+}, \overset{n}{+})
    &\text{if $i=n$ and $m_{n-1}=m_{n}=-$,} \\
    (-,-, m_3,\ldots, m_n)
    & \text{if $i=0$ and $m_1=m_2=+$,}\\
    0 & \text{otherwise},
           \end{cases} \\
  f_iv&= \begin{cases}
    (m_1,\ldots, \overset{i}{-}, \overset{i+1}{+}, \ldots, m_n)
    &\text{if $i\not=0,n$ and $m_i=+$, $m_{i+1}=-$,} \\
    (m_1,\ldots, \overset{n-1}{-}, \overset{n}{-})
    & \text{if $i=n$ and $m_{n-1}=m_{n}=+$,} \\
    (+,+, m_3,\ldots, m_n)
    & \text{if $i=0$ and $m_1=m_2=-$,}\\
    0 & \text{otherwise},
           \end{cases}
\end{align*}
We call $V_{\rm sp}^{(\pm)}$ by the {\it spin representations} of $\uqpg$.
They are simple $\uqpg$-modules and
\begin{align*}
  V(\varpi_{n-1}) \simeq V_{\rm sp}^{(-)},
  \quad V(\varpi_{n}) \simeq V_{\rm sp}^{(+)}
\end{align*}
as $\uqpg$-modules (\cite[Section 4.3.2]{KMN2}).

For $i,j \in B$ and $m \in B_{{\rm sp}}^{(\pm)}$, set
\begin{align*}
  m(i)=\begin{cases}
    (m_1,\ldots,\overset{i}{+},\ldots,m_n)
    & \text{if $i \preceq n$, $m_i=-$,} \\[2ex]
    (m_1,\ldots,\overset{\ol i}{-},\ldots,m_n) &
    \text{if $i \succ n$, $m_{\ol i}=+$,} \\[1ex]
    0 & \text{otherwise},
  \end{cases}
\end{align*}
and set $m(i,j)=\bl m(i)\br(j)$.
In \cite{DO96}, the following explicit form of the normalized $R$-matrices
are presented:

For $i \in B$, and $m \in B_{{\rm sp}}^{(+)}$,
\begin{align}
  \Rnorm_{1,n}(z) (v_i \otimes m) &=
  b_{i,m}(z) (m \otimes v_i)
  + \sum_{j \in B , j \neq i} b_{i,j,m}c_{i,j}(z)
  ((-q)^{n-1}z)^{\delta(i \succ j)} m(i,\bar j) \otimes v_j, \\
   \Rnorm_{n,1}(z) ( m \otimes v_i) &=b_{i,m}(z) (v_i\tens m)
 + \sum_{j \in B , j \neq i} \overline b_{i,j,m}c_{i,j}(z)
 ((-q)^{n-1}z)^{\delta(i \prec j)} v_j \otimes m(i,\bar j),\label{eq:rmatrix n1}
\end{align}
where
\begin{align*}
  b_{i,m}(z) &=
  \begin{cases}
    1 &\text{($i \preceq n$, $m_i=+$\quad or \quad$ i \succeq \overline n$,
    $m_{\bar i} =-$),} \\
    q \dfrac{z-(-q)^{n-2}}{z-{(-q)^{n}}}
    &\text{($i \preceq n$, $ m_i=-$\quad or \quad
    $i \succeq \overline n$, $ m_{\bar i}=+ $),}
  \end{cases} \\
  c_{i,j}(z) &=
      \dfrac{1-q^2}{z-(-q)^{n}},
\end{align*}
and for some non-zero constants $b_{i,j,m}$ and $\overline b_{i,j,m}$.
For the precise description of $b_{i,j,m}$ and $\overline b_{i,j,m}$, see \cite{DO96}.
Observe that
\begin{align} \label{eq:deno1n}
  d_{1,n}(z)=d_{n,1}(z)= z-(-q)^{n}.
\end{align}

The denominators of the normalized $R$-matrices between the spin representations are calculated in \cite{Okado90}:
\begin{align}
  d_{n-1,n-1}(z)=d_{n,n}(z)&=\prod_{s=1}^{[n/2]} (z-(-q)^{4s-2})
  =\prod_{\substack{2\le k\le 2n-2,\; k\equiv 2\,\mathrm{mod}4}} (z-(-q)^k),
  \label{eq:denonn}\\
   d_{n-1,n}(z)=d_{n,n-1}(z)&=\prod_{s=1}^{[(n-1)/2]} (z-(-q)^{4s})
  =\prod_{\substack{4\le k\le 2n-2,\; k\equiv0\,\mathrm{mod}4}} (z-(-q)^k)\label{eq:denon-1n},
\end{align}
where $[k]$ denotes the largest integer that does not exceed $k$.

The following proposition is proved in \cite{Koga98}.
\begin{prop} [{\cite[Proposition 3.1]{Koga98}}]
For $1 \leq k \leq n-2$ and $n',n''\in\{n-1,n\}$
such that  $n'-n''\equiv n-k\mod 2$, there exists a surjective homomorphism
\eq
  &&V(\varpi_{n'})_{(-q)^{-n+k+1}} \otimes
  V(\varpi_{n''})_{(-q)^{n-k-1}} \twoheadrightarrow V(\varpi_k),
  \label{eq:surjection1}\eneq
and an injective homomorphism
\eq
&&V(\varpi_k)\mono V(\varpi_{n'})_{(-q)^{n-k-1}}\tens
V(\varpi_{n''})_{(-q)^{-n+k+1}}. \label{eq:injection2}
\eneq
\end{prop}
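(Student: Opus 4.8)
The plan is to read both statements off from the structure theory of normalized $R$-matrices, using the denominator formulas \eqref{eq:denonn}, \eqref{eq:denon-1n}, \eqref{eq:symd} established above; in fact I would reduce everything to a single statement about the head of a tensor product of two spin modules, so that the embedding \eqref{eq:injection2} costs nothing once the surjection \eqref{eq:surjection1} is known. The hypothesis $n'-n''\equiv n-k\bmod 2$ is symmetric in $n'$ and $n''$, so it suffices to prove \eqref{eq:surjection1} for every admissible triple $(n',n'',k)$. Granting it for the pair $(n'',n')$ gives an epimorphism $V(\varpi_{n''})_a\otimes V(\varpi_{n'})_b\twoheadrightarrow V(\varpi_k)$, where $a=(-q)^{-n+k+1}$ and $b=(-q)^{n-k-1}$. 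Now $V(\varpi_{n''})$ is a \emph{real} simple module: since $d_{n'',n''}(1)=\prod_s(1-q^{4s-2})\ne 0$ by \eqref{eq:denonn}, the $R$-matrix $\Rnorm_{n'',n''}(z)$ is an isomorphism at $z=1$, so $V(\varpi_{n''})_a\otimes V(\varpi_{n''})_a$ is simple. Hence $V(\varpi_{n''})_a\otimes V(\varpi_{n'})_b$ has a simple head, which must therefore be $V(\varpi_k)$, and by the general isomorphism $\hd(M\otimes N)\simeq\soc(N\otimes M)$ valid when $M$ is real we get $V(\varpi_k)\simeq\soc\bigl(V(\varpi_{n'})_b\otimes V(\varpi_{n''})_a\bigr)\hookrightarrow V(\varpi_{n'})_b\otimes V(\varpi_{n''})_a$, which is exactly \eqref{eq:injection2}.

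\emph{Producing the surjection.} Keep $a=(-q)^{-n+k+1}$, $b=(-q)^{n-k-1}$, so $b/a=(-q)^{2(n-k-1)}$ and $ab=1$. The numerical input is that $b/a$ is a \emph{simple} zero of $d_{n',n''}(z)$: if $n'=n''$ the parity hypothesis forces $n-k=2m$ with $1\le m\le[n/2]$, and $b/a=(-q)^{4m-2}$ is a root of $d_{n,n}=d_{n-1,n-1}$ by \eqref{eq:denonn}; if $n'\ne n''$ it forces $n-k-1=2m$ with $1\le m\le[(n-1)/2]$, and $b/a=(-q)^{4m}$ is a root of $d_{n-1,n}=d_{n,n-1}$ by \eqref{eq:denon-1n}. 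Therefore $\Rnorm_{n',n''}(z)$ has a simple pole at $z=b/a$, so the $\uqpg$-linear map
\[
\bar R\seteq\bigl(d_{n',n''}(z)\,\Rnorm_{n',n''}(z)\bigr)\big|_{z=b/a}\col
V(\varpi_{n'})_a\otimes V(\varpi_{n''})_b\longrightarrow V(\varpi_{n''})_b\otimes V(\varpi_{n'})_a
\]
is nonzero, and it annihilates the extremal vector $u\otimes u'$, because $\Rnorm_{n',n''}(z)$ sends $u\otimes u'$ to the $z$-regular value $u'\otimes u$ while $d_{n',n''}(b/a)=0$. Since $V(\varpi_{n'})$ is also real (same argument), $V(\varpi_{n'})_a\otimes V(\varpi_{n''})_b$ has a simple head $L$ and $L\simeq\Img\bar R$; in particular $V(\varpi_{n'})_a\otimes V(\varpi_{n''})_b\twoheadrightarrow L$. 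One should note that $L$ is \emph{not} the cyclic quotient generated by $u\otimes u'$: the two spectral parameters are in the ``wrong'' cyclic order, so $u\otimes u'$ lands in the socle, not the head, which is precisely why a ``small'' constituent can appear on top.

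\emph{Identifying $L\simeq V(\varpi_k)$ — the main obstacle.} Every composition factor of $V(\varpi_{n'})_a\otimes V(\varpi_{n''})_b$ restricts over $\g_0$ to a summand of the classical decomposition $V(\varpi_{n'})\otimes V(\varpi_{n''})=\bigoplus_jV(\varpi_j)$, where $j$ runs over $0\le j\le n-2$ with $j\equiv n-\delta(n'\ne n'')\bmod 2$, together with the constituents of $\bigwedge^{\ge n-1}$ of the vector representation; and each $V(\varpi_j)$ with $j\le n-2$ is the $\g_0$-irreducible $\bigwedge^jV$. I would pin down which of these is $L$ in one of two ways: \textbf{(i)} compute the Drinfeld polynomial of $L$ from $\chi_q(V(\varpi_{n'})_a\otimes V(\varpi_{n''})_b)=\chi_q(V(\varpi_{n'})_a)\,\chi_q(V(\varpi_{n''})_b)$ and check that the dominant monomial attached to the simple head — the ``lowest'' one, reflecting the reversed order of parameters — is exactly that of $V(\varpi_k)$, the identity $ab=1$ accounting for the vanishing spectral shift; or \textbf{(ii)}, following \cite{DO96, Okado90, Koga98}, use the explicit form of $\Rnorm_{n',n''}(z)$ — obtained by fusing copies of the vector representation $V$, cf.\ Corollary~\ref{cor:k+l to k tensor l} and the formulas \eqref{eq:rmatrix 11}, \eqref{eq:rmatrix n1} — to write $\ker\bar R$ down explicitly and recognise the quotient as $\bigwedge^kV\simeq V(\varpi_k)$. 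I expect this identification — showing that $L$ is genuinely simple and isomorphic to $V(\varpi_k)$, rather than merely admitting $V(\varpi_k)$ as a subquotient, and doing so uniformly in $k$ — to be the real work, and this is exactly why the cited proofs pass through the explicit $R$-matrices of the vector and spin representations of $U'_q(D_n^{(1)})$.
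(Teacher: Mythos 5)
The paper does not prove this proposition — it is cited from \cite{Koga98}, where the argument is an explicit $R$-matrix computation (essentially your option~(ii)) — so there is no in-paper proof to compare against; I will assess your outline on its own merits.

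Your reduction is sound as a framework. The parity analysis showing that $b/a=(-q)^{2(n-k-1)}$ is a simple zero of $d_{n',n''}(z)$ in both cases ($n'=n''$ and $n'\ne n''$) is correct, and invoking $d_{n'',n''}(1)\ne0$ to get reality of the spin modules is a valid criterion. Deducing \eqref{eq:injection2} from the surjection for the transposed pair $(n'',n')$ via $\hd(M\otimes N)\simeq\soc(N\otimes M)$ when $M$ is real is also legitimate, though this machinery postdates the paper; a more elementary route available in the paper's own toolkit is simply to dualize \eqref{eq:surjection1}, using $V(\varpi_i)^*\simeq V(\varpi_{i^*})_{q^{-2n+2}}$ and ${}^*V(\varpi_i)\simeq V(\varpi_{i^*})_{q^{2n-2}}$ as recorded in the appendix.

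However, the proof has a genuine gap, and you flag it yourself: you establish only that $V(\varpi_{n'})_a\otimes V(\varpi_{n''})_b$ has a simple head $L=\Img\bar R$; you do not show $L\simeq V(\varpi_k)$, which is the entire content of the proposition. Option~(i) as sketched does not close this gap: the identity $ab=1$ fixes only the overall spectral normalization of $L$, not which fundamental weight it carries. To make the $q$-character route work one must determine the dominant monomials occurring in $\chi_q\bigl(V(\varpi_{n'})_a\bigr)\chi_q\bigl(V(\varpi_{n''})_b\bigr)$, show that exactly two survive (that of $L(m)$ and that of $V(\varpi_k)$), and argue that the head of the reversed product is the latter — all of which is a nontrivial computation with the spin $q$-characters, not a formal consequence of $ab=1$. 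Option~(ii) is what Koga actually does and is substantial. As written, your proposal correctly maps the shape of the argument and correctly isolates the crux, but the crux is left unresolved.
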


The following lemma will be used in the next section.
\begin{lemma} \label{le:z-q^n-1}
Let $1 \leq k,l \leq n-2$  such that $k+ l = n-1$.
Then there exists a surjective $\uqpg$-module homomorphism
  \begin{align}
    V(\varpi_k)_{(-q)^{-l}} \otimes V(\varpi_l)_{(-q)^{k}}
    \epito V(\varpi_{n-1}) \otimes V(\varpi_{n}),\label{eq:nn-1epi}
  \end{align}
  and an injective $\uqpg$-module homomorphism
\begin{align}
   V(\varpi_{n-1}) \otimes V(\varpi_{n})
   \monoto
    V(\varpi_k)_{(-q)^{l}} \otimes V(\varpi_l)_{(-q)^{-k}}.\label{eq:nn-1mono}
  \end{align}
\end{lemma}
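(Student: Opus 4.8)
The plan is to establish \eqref{eq:nn-1epi} and to deduce \eqref{eq:nn-1mono} from it by duality. For the duality step, apply the left-dual functor $M\mapsto M^{*}$ to a surjection $\vpi[(-q)^{-l}]{k}\otimes\vpi[(-q)^{k}]{l}\epito\vpi{n-1}{}\otimes\vpi{n}{}$: since $(M\otimes N)^{*}\simeq N^{*}\otimes M^{*}$, $\vpi{i}{}^{*}\simeq\vpi[q^{-2n+2}]{i^{*}}$ (with the matching rule for the affinization variable), and $i^{*}=i$ for $1\le i\le n-2$ while $\{(n-1)^{*},n^{*}\}=\{n-1,n\}$, one obtains, after simplifying the spectral shifts, an injection of exactly the shape \eqref{eq:nn-1mono}; starting instead from \eqref{eq:nn-1mono} and using $^{*}\vpi{i}{}\simeq\vpi[q^{2n-2}]{i^{*}}$ recovers \eqref{eq:nn-1epi}. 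So it suffices to construct \eqref{eq:nn-1epi}, and we may assume $k\le l$.

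For \eqref{eq:nn-1epi} the natural strategy is to realize the source as a quotient of a tensor power of the vector representation and then to identify $\vpi{n-1}{}\otimes\vpi{n}{}$ with a further quotient of that tensor power. Iterating the surjections $p_{i,j}$ of Corollary~\ref{cor:k+l to k tensor l} (equivalently, by Proposition~\ref{prop:fusion construction}) gives surjections $V^{\otimes(i)}\epito V(\varpi_{i})$ for $1\le i\le n-2$; tensoring the shifted ones $\bl V^{\otimes(k)}\br_{(-q)^{-l}}\epito\vpi[(-q)^{-l}]{k}$ and $\bl V^{\otimes(l)}\br_{(-q)^{k}}\epito\vpi[(-q)^{k}]{l}$ and noting that, because $k+l=n-1$, the left-hand spectral parameters glue together into those of $V^{\otimes(n-1)}$, we get a surjection $V^{\otimes(n-1)}\epito\vpi[(-q)^{-l}]{k}\otimes\vpi[(-q)^{k}]{l}$. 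One then has to produce, from this \emph{same} $V^{\otimes(n-1)}$, a surjection onto $\vpi{n-1}{}\otimes\vpi{n}{}$ whose kernel contains the kernel of the first one: here Koga's Proposition \eqref{eq:surjection1} enters, expressing the spin modules $V(\varpi_{n-1})$, $V(\varpi_{n})$ at the relevant resonant parameters as quotients of products $\vpi[\bullet]{i}\otimes\vpi[\bullet]{j}$ with $i,j\le n-2$, hence again (Proposition~\ref{prop:fusion construction}) as quotients of $V^{\otimes(n-1)}$, and the point is then to compare the two kernels using the explicit formula \eqref{eq:kernel Tk} for $\Ker T^{(\bullet)}$. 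Once this kernel containment is in place, \eqref{eq:nn-1epi} follows. An alternative route is an induction on $\min(k,l)$: split off a factor $V(\varpi_{1})$ from the larger of $V(\varpi_{k}),V(\varpi_{l})$ using $\iota_{\bullet,1}$ and $p_{1,\bullet}$ of Corollary~\ref{cor:k+l to k tensor l}, reducing $(k,l)$ to $(k-1,l+1)$, with the base case $k=1$, $l=n-2$ treated by hand from the vector and spin representations and from \eqref{eq:surjection1}, \eqref{eq:injection2}.

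The main obstacle is precisely this kernel comparison at the boundary value $k+l=n-1$: the hypothesis is what makes the two tensor-power presentations compatible, and — since $\vpi[(-q)^{-l}]{k}\otimes\vpi[(-q)^{k}]{l}$ and $\vpi{n-1}{}\otimes\vpi{n}{}$ have \emph{different} dominant extremal weights — the existence of the surjection between them is not a formal consequence of weight considerations; it has to be read off from the R-matrix combinatorics together with Koga's Proposition. The remaining ingredients — passing from Corollary~\ref{cor:k+l to k tensor l} to tensor powers of $V$, the bookkeeping of $(-q)$-powers, and the duality passage \eqref{eq:nn-1epi}$\Rightarrow$\eqref{eq:nn-1mono} — are routine.
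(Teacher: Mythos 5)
Your duality reduction from \eqref{eq:nn-1mono} to \eqref{eq:nn-1epi} is fine and matches the paper's framework, but the main construction you propose for \eqref{eq:nn-1epi} has a genuine gap, and the key citation goes in the wrong direction.

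You want to present both $\vpi[(-q)^{-l}]{k}\otimes\vpi[(-q)^{k}]{l}$ and $\vpi{n-1}\otimes\vpi{n}$ as quotients of $V^{\otimes(n-1)}$ and then compare the two kernels via \eqref{eq:kernel Tk}. The first presentation is fine, but for the second you invoke Koga's surjection \eqref{eq:surjection1}, claiming it expresses the spin modules as quotients of fundamental representations $\vpi[\bullet]{i}\otimes\vpi[\bullet]{j}$ with $i,j\le n-2$. That is backwards: \eqref{eq:surjection1} says $V(\varpi_{n'})_{\bullet}\otimes V(\varpi_{n''})_{\bullet}\epito V(\varpi_k)$, i.e.\ it writes the \emph{fundamental} representation $V(\varpi_k)$ as a quotient of a tensor product of \emph{spin} modules, not the other way around. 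There is no presentation of $\vpi{n-1}\otimes\vpi{n}$ as a quotient of $V^{\otimes(n-1)}$ available here (indeed the dominant extremal weights are incompatible, as you yourself observe), so the kernel-containment you would need is not even between maps with the same target, and the inductive alternative inherits the same issue at the base case. You flag the kernel comparison as ``the main obstacle,'' but the proposal never actually resolves it.

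The paper avoids this entirely by using the \emph{injective} half of Koga's result, \eqref{eq:injection2}, twice: embed $\vpi[(-q)^{-l}]{k}\monoto\vpi{n-1}\otimes\vpi[\bullet]{n'}$ and $\vpi[(-q)^{k}]{l}\monoto\vpi[\bullet]{n''}\otimes\vpi{n}$ with $n''=(n')^{*}$ chosen so that the spectral parameters of the two middle factors are dual; then compose with the trace map $\vpi[\bullet]{n'}\otimes\vpi[\bullet]{n''}\to\cor$ to land in $\vpi{n-1}\otimes\vpi{n}$. A crystal-basis computation on a lowest-weight $\otimes$ highest-weight vector shows this composite $\varphi$ is nonzero, and since $\vpi{n-1}\otimes\vpi{n}$ is simple (its denominator $d_{n-1,n}(z)$ does not vanish at $z=1$), $\varphi$ is automatically surjective. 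This trace trick is the missing idea: it replaces the kernel bookkeeping by a one-vector nonvanishing check plus irreducibility of the target, and it is precisely where the hypothesis $k+l=n-1$ is used to make the two copies of \eqref{eq:injection2} mesh.
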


\begin{proof}
By \eqref{eq:injection2}, we obtain injective $\uqpg$-module homomorphisms
\eq
  &&\ba{l}
  \xi_1\col \vpi[(-q)^{-l}]{k}
  \monoto\vpi{n-1}\otimes \vpi[(-q)^{-n+k+1-l}]{n'}, \\[1ex]
   \xi_2\col\vpi[(-q)^{k}]{l}
   \monoto\vpi[(-q)^{n-l-1+k}]{n''}\otimes \vpi{n},
   \ea
 \eneq
  where $n'\in\{n-1,n\}$ such that $n-1-n'\equiv n-k \mod2$ and
  $n''=(n')^*$. Note that $n''-n\equiv n-l\mod2$.

Let $\vphi$ be the composition of the following $\uqpg$-module homomorphisms:
\eq&&\ba{c}
\xymatrix{
{\vpi[{(-q)^{-l}}]{k} \otimes \vpi[{(-q)^{k}}]{l}}\ar[d]^{\xi_1\tens \xi_2} \\
{\vpi{n-1}\otimes \vpi[(-q)^{-n+k+1-l}]{n'}\tens
\vpi[{(-q)^{n-l-1+k}}]{n''}\otimes \vpi{n}}
\ar[d]^{\vpi{n-1} \otimes {\rm tr}\otimes\vpi{n}}
\\
{\vpi{n-1}\otimes \vpi{n}.}}\ea \label{eq:pole k+1}
\eneq
We shall show that $\vphi$ does not vanish.
Let $v$ be a $U_q(\g_0)$-lowest weight vector of $\vpi[{(-q)^l}]{k}$
of weight $-\varpi_k$
and $w$ a $U_q(\g_0)$-highest weight vector of $\vpi[{(-q)^{k}}]{l}$
of weight $\varpi_\ell$.
Then by the crystal basis theory, we have, up to constant multiple
\eqn
\ba{lll}
 \xi_1(v)
&\equiv v_1\otimes u_{-\vp_{n''}}
&\mod \soplus_{\la\not=-\vp_{n''}}
\vpi{n-1}{}_{\vp_k-\la}\otimes \bl\vpi[(-q)^{-n+k+1-l}]{n'}\br_{\la}\\[1ex]
\xi_2(w)&\equiv u_{\vp_{n''}}\tens w_1
&\mod\soplus_{\la\not=\vp_{n''}}
\bl\vpi[{(-q)^{n-l-1+k}}]{n''}\br_{\la}\otimes \vpi{n},\ea
\eneqn
where $u_{-\vp_{n''}}$  is the lowest weight vector of
$\vpi[(-q)^{-n+k+1-l}]{n'}$ of weight $-\vp_{n''}$,\,
$v_1$ is a non-zero vector of $\vpi{n-1}$ of weight $-\vp_k+\vp_{n''}$,
$u_{\vp_{n''}}$ is the highest weight vector of
$\vpi[{(-q)^{n-l-1+k}}]{n''}$ and $w_1$ is a non-zero vector of $\vpi{n}$ of
weight $\vp_l-\vp_{n''}$.
Then we  have
$$\vphi(v\tens w)
\equiv\tr(u_{-\vp_{n''}}\tens u_{\vp_{n''}})v_1\tens w_1
\mod \soplus_{\la\not=\vp_k+\vp_{n''}}\vpi{n-1}{}_\la\tens
\vpi{n}{}_{\vp_k+\vp_l-\la}.$$
Hence $\vphi$ is non-zero.
Since $\vpi{n-1}\tens\vpi{n}$ is a simple $\uqpg$-module, $\vphi$ is surjective.
The homomorphism \eqref{eq:nn-1epi} is obtained from \eqref{eq:nn-1mono}
by duality.
\end{proof}

\subsection{Universal $R$-matrices.}

Let $K$ be a commutative ring containing $\cor[z, z^{-1}]$.
For a $\uqpg$-module $M$, we denote by $M_z$ the $K\tens \uqpg$-module
$K\tens_{\cor[z, z^{-1}]}M_\aff$. Here $z$ acts on $M_\aff$ by $z_M$.

A $\uqpg[z,z^{-1}]$-homomorphism from $M\otimes N_z$ to $N_z \otimes M$
is called  an {\it intertwiner}.

By taking $ \cor((z))$ for $K$, we have a $\uqpg$-module homomorphism
\begin{align*}
  \Runiv_{M,N}(z) : M \otimes N_z \rightarrow N_z \otimes M,
\end{align*}
functorially in finite-dimensional integrable $\uqpg$-modules $M$ and $N$.
It is called the {\it universal $R$-matrix from $M\otimes N_z$ to $N_z \otimes M$} (\cite{IFR92}).
Then the universal $R$-matrices satisfy the following property:
\begin{align*}
  \Runiv_{M,L\tens L'}(z) =(L_z \otimes \Runiv_{M,L'} (z))
  \circ (\Runiv_{M,L}(z) \otimes L'_{z}).
\end{align*}

Let $\lambda$ and $\mu$ be the dominant extremal weights of finite-dimensional irreducible $\uqpg$-modules $M$ and $N$, respectively.
Because the intertwiners between irreducible $\uqpg$-modules are unique up to constant (\cite[Corollary 2.5]{AK}), we have
\begin{align*}
  \Runiv_{M,N}(z) = a_{M,N}(z) \Rnorm_{M,N}(z),
\end{align*}
for some element $a_{MN}(z)$ of $ \cor((z))$.
Actually,  we have (see e.g.\ \cite[(A.4)]{AK})
\begin{align*}
a_{M,N}(z) \in q^{(\lambda | \mu)}(1+ z \cor[[z]]).
\end{align*}
The following useful lemma appeared in \cite{AK}.
\begin{lemma}[{\cite[Lemma C.15]{AK}}] \label{lem:dvw avw}
  Let $V', V'', V$ and $W$ be irreducible $\uqpg$-modules. Assume that
  there is a surjective morphism $V'\otimes V'' \epito V$. Then
  \begin{align*}
   \dfrac{ d_{W,V'}(z) d_{W,V''}(z) a_{W,V}(z)}{d_{W,V}(z)a_{W,V'}(z)a_{W,V''}(z)}
   \quad \text{and} &\quad
   \dfrac{ d_{V',W}(z) d_{V'',W}(z) a_{V,W}(z)}{d_{V,W}(z)a_{V',W}(z)a_{V'',W}(z)}
  \end{align*}
  are in $\cor[z,z^{-1}]$.
\end{lemma}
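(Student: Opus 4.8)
The plan is to exploit two structural features of the universal $R$-matrix recalled above: its naturality in each argument, and the multiplicativity identity $\Runiv_{W,L\otimes L'}(z)=(L_z\otimes\Runiv_{W,L'}(z))\circ(\Runiv_{W,L}(z)\otimes L'_z)$. Fix the given surjection $p\col V'\otimes V''\epito V$. Applying naturality of $\Runiv_{W,-}(z)$ to $\id_W\otimes p$ produces a commuting square with top edge $\Runiv_{W,V'\otimes V''}(z)$, bottom edge $\Runiv_{W,V}(z)$, left edge $W\otimes p_z$ and right edge $p_z\otimes W$. Expanding the top edge by multiplicativity and substituting $\Runiv_{W,V'}(z)=a_{W,V'}(z)\Rnorm_{W,V'}(z)$, $\Runiv_{W,V''}(z)=a_{W,V''}(z)\Rnorm_{W,V''}(z)$ and $\Runiv_{W,V}(z)=a_{W,V}(z)\Rnorm_{W,V}(z)$, I get an identity of $\cor((z))$-linear maps
\begin{equation*}
a_{W,V'}(z)\,a_{W,V''}(z)\,(p_z\otimes W)\circ\bigl(V'_z\otimes\Rnorm_{W,V''}(z)\bigr)\circ\bigl(\Rnorm_{W,V'}(z)\otimes V''_z\bigr)=a_{W,V}(z)\,\Rnorm_{W,V}(z)\circ(W\otimes p_z).
\end{equation*}

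The next step is to clear denominators. The denominator of the composite $\bigl(V'_z\otimes\Rnorm_{W,V''}(z)\bigr)\circ\bigl(\Rnorm_{W,V'}(z)\otimes V''_z\bigr)$ divides $d_{W,V'}(z)\,d_{W,V''}(z)$, so after multiplying the displayed identity by $d_{W,V'}(z)\,d_{W,V''}(z)$ its left-hand side becomes a morphism defined over $\cor[z^{\pm1}]$ (it carries the relevant $\cor[z^{\pm1}]$-lattices into one another), while its right-hand side equals
\begin{equation*}
\frac{d_{W,V'}(z)\,d_{W,V''}(z)\,a_{W,V}(z)}{d_{W,V}(z)\,a_{W,V'}(z)\,a_{W,V''}(z)}\,\bigl(d_{W,V}(z)\Rnorm_{W,V}(z)\bigr)\circ(W\otimes p_z).
\end{equation*}
It then remains to see that the scalar prefactor lies in $\cor[z^{\pm1}]$. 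For this one uses that $d_{W,V}(z)$ is, by definition, the monic polynomial of smallest degree clearing the poles of $\Rnorm_{W,V}(z)$; hence $d_{W,V}(z)\Rnorm_{W,V}(z)$ is \emph{primitive} over the principal ideal domain $\cor[z^{\pm1}]$, meaning its matrix entries share no non-unit common factor, and because $p$ is surjective this primitivity passes to $\bigl(d_{W,V}(z)\Rnorm_{W,V}(z)\bigr)\circ(W\otimes p_z)$. In an equation $f(z)M(z)=N(z)$ with $M$ primitive and $N$ defined over $\cor[z^{\pm1}]$, the rational function $f$ is necessarily in $\cor[z^{\pm1}]$; applied here this is exactly the first assertion.

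For the second assertion I would repeat the argument with the companion multiplicativity identity $\Runiv_{L\otimes L',W}(z)=(\Runiv_{L,W}(z)\otimes L'_z)\circ(L_z\otimes\Runiv_{L',W}(z))$ and naturality of $\Runiv_{-,W}(z)$ applied to $p\otimes\id_W\col(V'\otimes V'')\otimes W\to V\otimes W$; this yields the mirror-image commuting square and the computation is verbatim with left and right interchanged.

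The delicate step is the primitivity claim used when clearing denominators: one has to verify carefully that $d_{W,V}(z)\Rnorm_{W,V}(z)$ really is primitive — this is precisely the minimality built into the definition of the denominator $d_{W,V}(z)$ — and that this primitivity is not lost after post-composing with the surjection $W\otimes p_z$, so that no spurious polynomial factor can be absorbed into the scalar prefactor. Everything else is formal manipulation of the multiplicativity (hexagon) relations together with the normalizations $a_{M,N}(z)\in q^{(\lambda\,|\,\mu)}(1+z\cor[[z]])$ and the defining property of $d_{M,N}(z)$.
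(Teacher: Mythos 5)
Your argument is correct. Note, however, that the paper does not prove this lemma itself; it is quoted verbatim from \cite[Lemma~C.15]{AK}, so there is no in-paper proof to compare against. Your reconstruction matches the argument in \cite{AK}: naturality of $\Runiv_{W,-}(z)$ applied to $\mathrm{id}_W\otimes p$, together with the multiplicativity of the universal $R$-matrix and the factorizations $\Runiv=a\,\Rnorm$, yield the displayed identity, and the conclusion follows by clearing denominators and a content argument over the PID $\cor[z,z^{-1}]$. The two delicate points you flag are indeed the ones that need to be pinned down. First, $d_{W,V}(z)\Rnorm_{W,V}(z)$ is primitive because $\Rnorm_{W,V}(z)$ sends the tensor product of dominant extremal weight vectors to its flip, so one of the matrix entries of $d_{W,V}(z)\Rnorm_{W,V}(z)$ equals $d_{W,V}(z)$ itself; any common Laurent-polynomial factor of the entries therefore divides $d_{W,V}(z)$, and if it were a non-unit one could divide $d_{W,V}$ by it, contradicting minimality of the denominator. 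Second, primitivity is preserved under post-composition with $W\otimes p_z$: a surjection of finitely generated free $\cor[z,z^{-1}]$-modules splits, so the entries of $(d_{W,V}(z)\Rnorm_{W,V}(z))\circ(W\otimes p_z)$ and of $d_{W,V}(z)\Rnorm_{W,V}(z)$ generate the same ideal of $\cor[z,z^{-1}]$. With those details supplied, the argument closes correctly, and the second divisibility follows mutatis mutandis from the companion multiplicativity identity, as you say.
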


\subsection{The denominators of normalized R-matrices}
For $i,j \in I_0$, write
$$d_{i,j}(z)= \prod_{\nu}(z- x_\nu)
\ \text{and} \ d_{i^*,j}(z)= \prod_{\nu}(z- y_\nu).$$
Then by \cite[(a.10)]{AK}, \cite[(A.13)]{AK} and \eqref{eq:symd}, we have
\eq&&a_{i,j}(z)a_{i^* , j}(q^{-2n+2}z)\equiv
\dfrac{d_{i,j}(z)}{d_{i^* , j }(q^{2n-2}z^{-1})},\label{eq:ad}\\
 && a_{i,j}(z)
 = q^{(\varpi_i | \varpi_j)}
 \prod_{\nu}\dfrac{(p^* y_\nu z; p^{*2})_\infty
 (p^*\,  \overline{y_\nu}\, z; p^{*2})_\infty}
  {(x_\nu z; p^{*2})_\infty (p^{*2}\, \overline{x_\nu}\, z; p^{*2})_\infty},
  \nonumber
\eneq
where $p^*=q^{2n-2}$ and $(z;q)_\infty= \prod_{s=0}^\infty (1-q^nz)$.

Here and in the sequel, $f\equiv g$ means that
$f=ag$ for some $a\in\cor[z,z^{-1}]^\times$.
 Note that $\cor[z,z^{-1}]^\times = \set{c z^n}{n \in \Z, \ c \in \cor^\times}$.

Observe that $a_{i,j}(z)=a_{j,i}(z)$, because $d_{i,j}(z)=d_{j,i}(z)$
(see \eqref{eq:symd}).
By \eqref{eq:deno11} and \eqref{eq:deno1n}, we have
\begin{align}
 a_{1,1}(z) &\equiv \dfrac{[0][2n][2n-4][4n-4]}{[2][2n-2]^2[4n-6]}, \label{eq:a11}\\
 a_{1,n}(z) &\equiv  \dfrac{[n-2][3n-2]}{[n][3n-4]},\label{eq:an1}
\end{align}
where $[m]=((-q)^{m}z ; p^{*2})_\infty$.

Note that we have $[m]/[m+4n-4] \equiv z-(-q)^{-m}$.
\begin{prop}
  For $1\leq k \leq  n-2$, we have
  \begin{align} \label{eq:ank}
    a_{k,n}(z)\equiv \dfrac{[n-k-1][3n+k-3]}{[n+k-1][3n-k-3]}.
  \end{align}
\end{prop}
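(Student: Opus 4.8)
The plan is to argue by induction on $k$, the case $k=1$ being exactly \eqref{eq:an1} (note that $[n-1-1]=[n-2]$, $[3n+1-3]=[3n-2]$, etc.). So suppose $2\le k\le n-2$ and that \eqref{eq:ank} holds for $k-1$. First I would apply Corollary~\ref{cor:k+l to k tensor l} with $(k,\ell)$ replaced by $(k-1,1)$ to obtain a surjective $\uqpg$-module homomorphism $\vpi[(-q)^{-1}]{k-1}\otimes\vpi[(-q)^{k-1}]{1}\epito\vpi{k}$, and then feed it into Lemma~\ref{lem:dvw avw} with $W=\vpi{n}$, $V'=\vpi[(-q)^{-1}]{k-1}$, $V''=\vpi[(-q)^{k-1}]{1}$, $V=\vpi{k}$. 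Using the standard behaviour of the functions $d_{\bullet,\bullet}$ and $a_{\bullet,\bullet}$ under a spectral shift of a tensor factor (cf.\ \cite[Appendix~A]{AK}), together with $d_{i,j}(z)=d_{j,i}(z)$ and $a_{i,j}(z)=a_{j,i}(z)$, the first conclusion of Lemma~\ref{lem:dvw avw} becomes
\[
\frac{d_{k-1,n}((-q)^{-1}z)\,d_{1,n}((-q)^{k-1}z)\,a_{k,n}(z)}{d_{k,n}(z)\,a_{k-1,n}((-q)^{-1}z)\,a_{1,n}((-q)^{k-1}z)}\in\cor[z,z^{-1}].
\]

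Here $a_{1,n}$ is \eqref{eq:an1}, $a_{k-1,n}$ is the induction hypothesis, $d_{1,n}$ is \eqref{eq:deno1n}, while $d_{k-1,n}$ and $d_{k,n}$ are the spin-node denominators of Theorem~\ref{th:denomD}; these last two I would establish first (or along the same induction) from the surjection~\eqref{eq:surjection1}, the injection~\eqref{eq:injection2}, Lemma~\ref{le:z-q^n-1} and the known $d_{n-1,n-1}$, $d_{n-1,n}$ of \eqref{eq:denonn}--\eqref{eq:denon-1n}. Rewriting every $d_{\bullet,\bullet}$ via $[m]/[m+4n-4]\equiv z-(-q)^{-m}$ with $[m]=((-q)^mz;p^{*2})_\infty$, the displayed relation exhibits $a_{k,n}(z)$ as an explicit finite product of symbols $[m]^{\pm1}$, up to an a priori unknown factor in $\cor[z,z^{-1}]$.

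To remove that ambiguity I would invoke \eqref{eq:ad}. Since the Dynkin-diagram automorphism exchanging the two spin nodes $n-1,n$ identifies the two spin modules and fixes $\vpi{k}$ for $k\le n-2$, we have $d_{n^{*},k}=d_{n,k}$ and $a_{n^{*},k}\equiv a_{n,k}$, so \eqref{eq:ad} specializes to $a_{k,n}(z)\,a_{k,n}(q^{-2n+2}z)\equiv d_{k,n}(z)/d_{k,n}(q^{2n-2}z^{-1})$; equivalently, the second displayed formula in \eqref{eq:ad} writes $a_{k,n}(z)$ directly as a ratio of $q$-shifted factorials whose data are read off from $d_{k,n}$. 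Combined with the normalization $a_{k,n}(z)\in q^{(\varpi_k\mid\varpi_n)}(1+z\cor[[z]])$, this forces the residual $\cor[z,z^{-1}]$-factor to be a unit, and a direct count of exponents then yields $a_{k,n}(z)\equiv\dfrac{[n-k-1][3n+k-3]}{[n+k-1][3n-k-3]}$, closing the induction.

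The conceptual skeleton --- induction, the fusion surjection of Corollary~\ref{cor:k+l to k tensor l}, Lemma~\ref{lem:dvw avw}, and the duality identity \eqref{eq:ad} --- is routine. The main obstacle, and the place where errors are easiest to make, is the bookkeeping: carrying the shifts $(-q)^{\pm1}$, $(-q)^{\pm(k-1)}$ and $q^{2n-2}$ correctly through the symbols $[m]$ (i.e.\ keeping the exponents straight modulo $4n-4$), verifying that the leftover factor in $\cor[z,z^{-1}]$ is genuinely invertible rather than merely polynomial, and making sure that the spin-node denominators $d_{k-1,n}$, $d_{k,n}$ are already in hand at the moment they enter the argument.
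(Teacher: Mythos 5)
Your proposal has a structural gap: it is circular. You want to pin down $a_{k,n}(z)$ by feeding the fusion surjection $\vpi[(-q)^{-1}]{k-1}\otimes\vpi[(-q)^{k-1}]{1}\epito\vpi{k}$ into Lemma~\ref{lem:dvw avw} with $W=\vpi{n}$, but that lemma only yields a \emph{divisibility} in $\cor[z,z^{-1}]$, not an equality, and the divisor involves $d_{k,n}(z)$ and $d_{k-1,n}(z)$. You then propose to remove the ambiguity via \eqref{eq:ad} and the closed-form product expansion of $a_{i,j}$ in terms of $q$-shifted factorials --- but that expansion is written entirely in terms of the zeros $x_\nu$ of $d_{i,j}$, so it likewise presupposes that $d_{k,n}$ is already known. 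In the paper, $d_{k,n}$ is established only in the theorem \emph{after} this proposition, and that theorem's proof uses the present formula \eqref{eq:ank} (through the exact recursion \eqref{eq:recursive ank}) as input. Your suggestion to obtain $d_{k,n}$ ``first, or along the same induction'' from \eqref{eq:surjection1}, \eqref{eq:injection2}, Lemma~\ref{le:z-q^n-1} and the spin$\times$spin denominators \eqref{eq:denonn}--\eqref{eq:denon-1n} would again pass through Lemma~\ref{lem:dvw avw}, which reintroduces unknown $a$-factors. You would need a substantially more delicate joint-induction bookkeeping that you have not set up, and it is not clear it closes.

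The paper's proof avoids this entirely and is both simpler and stronger. It tracks the image of the distinguished vector $m^+_n\otimes v_{[1,\ldots,k]}$ through a commutative diagram of \emph{universal} R-matrices, splitting $\Runiv_{n,k}(z)$ via $\Runiv_{n,k-1}(-q^{-1}z)$ and $\Runiv_{n,1}((-q)^{k-1}z)$. The key observation is that $\Rnorm_{n,1}((-q)^{k-1}z)(m^+_n\otimes v_k)=v_k\otimes m^+_n$ \emph{exactly}, with no $z$-dependent scalar (because both sides lie in one-dimensional weight spaces obtained by hitting the highest weight vector with $f_{k-1}\cdots f_1$). This gives the identity $a_{n,k}(z)=a_{n,k-1}(-q^{-1}z)\,a_{n,1}((-q)^{k-1}z)$ on the nose, not just up to a Laurent-polynomial factor, and the induction closes using only the base case \eqref{eq:an1}. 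No denominator $d_{k,n}$ enters at any point. If you want to salvage your route, the missing idea you need is precisely this exact recursion; the divisibility statement from Lemma~\ref{lem:dvw avw} is too weak, and \eqref{eq:ad} cannot be invoked before $d_{k,n}$ is in hand.
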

\begin{proof}
  Consider the following commutative diagram:
  \begin{align*}
    \xymatrix@R=7ex{
    \vpi{n} \otimes \vpi[(-q)^{-1}z]{k-1} \otimes \vpi[(-q)^{k-1}z]{1}
    \ar[r] \ar[d]_{\Runiv_{n,k-1}(-q^{-1}z) \otimes \vpi[(-q)^{k-1}z]{1}} &
    \vpi{n} \otimes \vpi[z]{k} \ar[dd]_{\Runiv_{n,k}(z)} \\
    \vpi[(-q)^{-1}z]{k-1} \otimes \vpi{n} \otimes \vpi[(-q)^{k-1}z]{1}
    \ar[d]_{\vpi[(-q)^{-1}z]{k-1} \otimes \Runiv_{n,1}((-q)^{k-1}z)} &\\
    \vpi[(-q)^{-1}z]{k-1}\otimes \vpi[(-q)^{k-1}z]{1}\otimes
    \vpi{n}\ar[r] &
    \vpi[z]{k} \otimes \vpi{n}.
      }
  \end{align*}
 Then we have
\begin{align*}
   \xymatrix@R=2.5ex{
    m^+_{n} \otimes v_{[1,\ldots, k-1]} \otimes v_{k}
    \ar@{|-_{>}}[r] \ar@{|-_{>}}[d] &
    m^+_{n} \otimes v_{[1,\ldots, k-1,k]} \ar@{|-_{>}}[dd]\\
    a_{n,k-1}(-q^{-1}z) v_{[1,\ldots, k-1]} \otimes m^+_{n} \otimes v_{k}
    \ar@{|-_{>}}[d] &\\
*++{a_{n,k-1}(-q^{-1}z) a_{n,1}((-q)^{k-1}z)
         v_{[1,\ldots, k-1]} \otimes w} \ar@{|-_{>}}[r] &
   a_{n,k}(z) v_{[1,\ldots, k-1,k]} \otimes m^+_{n},
    }
\end{align*}
where $m_n^+$ is the highest weight vector of $\vpi{n}$,
 and $v_{[1,\ldots,k]}$ is the highest weight vector of $\vpi{k}$,
 and $w=\Rnorm_{n,1}((-q)^{k-1}z)(m^+_{n} \otimes v_{k})$.
 Since $m^+_{n} \otimes v_{k}=f_{k-1}\cdots f_1(m^+_{n} \otimes v_{1})$
 and $v_{k} \otimes m^+_{n}=f_{k-1}\cdots f_1(v_{1} \otimes m^+_{n})$,
 we have
\begin{align*}
 w= \Rnorm_{n,1}((-q)^{k-1}z)(m^+_{n} \otimes v_{k})= v_{k} \otimes m^+_{n},
\end{align*}
and hence
\begin{align} \label{eq:recursive ank}
a_{n,k}(z) = a_{n,k-1}(-q^{-1}z) \ a_{n,1}((-q)^{k-1}z)
\end{align}
for $2 \leq k \leq n-2$.
By \eqref{eq:an1} and induction on $k$, we have the desired result.
\end{proof}

Now we can determine the denominators between the spin representations and the other fundamental representations.

\begin{theorem}
  For $1 \leq k \leq n-2$, we have
  \begin{align}
    d_{k,n}(z) = \prod_{s=1}^{k}(z-(-q)^{n-k-1+2s}).
  \end{align}
\end{theorem}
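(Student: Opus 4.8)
The plan is to induct on $k$, the base case $k=1$ being \eqref{eq:deno1n} since $z-(-q)^{n}=\prod_{s=1}^{1}(z-(-q)^{n-1-1+2s})$. For $k\ge2$ I assume the formula for $d_{k-1,n}(z)$ and prove it for $d_{k,n}(z)$ by establishing two divisibilities: (I) $d_{k,n}(z)$ divides $\prod_{s=1}^{k}(z-(-q)^{n-k-1+2s})$, and (II) the reverse.

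For (I) I would apply Lemma~\ref{lem:dvw avw} to the fusion surjection $p_{k-1,1}\col V(\varpi_{k-1})_{(-q)^{-1}}\otimes V(\varpi_1)_{(-q)^{k-1}}\epito V(\varpi_k)$ of Corollary~\ref{cor:k+l to k tensor l}, with $W=V(\varpi_n)$, $V'=V(\varpi_{k-1})_{(-q)^{-1}}$, $V''=V(\varpi_1)_{(-q)^{k-1}}$, $V=V(\varpi_k)$. This gives that $d_{n,k-1}((-q)^{-1}z)\,d_{n,1}((-q)^{k-1}z)\,a_{n,k}(z)\big/\bigl(d_{n,k}(z)\,a_{n,k-1}((-q)^{-1}z)\,a_{n,1}((-q)^{k-1}z)\bigr)$ lies in $\cor[z,z^{-1}]$; the $a$-factors cancel by the recursion \eqref{eq:recursive ank}, so $d_{k,n}(z)$ divides $d_{k-1,n}((-q)^{-1}z)\,d_{1,n}((-q)^{k-1}z)$. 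Feeding in the induction hypothesis and $d_{1,n}(z)=z-(-q)^{n}$ and reindexing the product yields (I); in particular $d_{k,n}(z)$ has only simple roots, all of the form $(-q)^{m}$ with $m\ge n-k+1>0$.

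For (II) I would argue in two steps. Since $k^{*}=k$ for $1\le k\le n-2$, the relation \eqref{eq:ad} reads $a_{k,n}(z)\,a_{k,n}(q^{-2n+2}z)\equiv d_{k,n}(z)/d_{k,n}(q^{2n-2}z^{-1})$; substituting the value \eqref{eq:ank} of $a_{k,n}(z)$ and simplifying via $[m]/[m+4n-4]\equiv z-(-q)^{-m}$ gives $d_{k,n}(z)/d_{k,n}(q^{2n-2}z^{-1})\equiv (z-(-q)^{n+k-1})/(z-(-q)^{n-k-1})$. Together with (I) this forces $z-(-q)^{n+k-1}$ to divide $d_{k,n}(z)$ and forces the remaining linear factors to be stable under the involution $(-q)^{m}\leftrightarrow(-q)^{2n-2-m}$, but it does not by itself produce all of them. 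To recover the missing factors I would show that $V(\varpi_k)\otimes V(\varpi_n)_{(-q)^{\,n-k-1+2s}}$ is reducible for each $s=1,\dots,k$: since $d_{k,n}=d_{n,k}$ divides the polynomial in (I), no root of which has a negative exponent, $(-q)^{-(n-k-1+2s)}$ cannot be a root of $d_{n,k}$, so reducibility forces $d_{k,n}((-q)^{\,n-k-1+2s})=0$, and comparison with (I) gives equality. The reducibility I would get by a sandwich argument combining the surjection $p_{k-1,1}$, the injection $\iota_{k-1,1}\col V(\varpi_k)\hookrightarrow V(\varpi_{k-1})_{(-q)}\otimes V(\varpi_1)_{(-q)^{1-k}}$ of Corollary~\ref{cor:k+l to k tensor l}, and the spin fusions \eqref{eq:surjection1} and \eqref{eq:injection2}, which let one transport the poles of $\Rnorm_{V(\varpi_{k-1}),V(\varpi_n)}$ and $\Rnorm_{V(\varpi_1),V(\varpi_n)}$ --- known by induction and by \eqref{eq:deno1n} --- to poles of $\Rnorm_{V(\varpi_k),V(\varpi_n)}$.

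The divisibility (I) is routine once \eqref{eq:recursive ank} is available. The main obstacle is the lower bound in (II): one must align the spectral parameters so that the known poles of $\Rnorm_{V(\varpi_{k-1}),V(\varpi_n)}$ and $\Rnorm_{V(\varpi_1),V(\varpi_n)}$ land exactly at the exponents $n-k-1+2s$, and then check that none of them is cancelled in passing from $V(\varpi_{k-1})_{(-q)}\otimes V(\varpi_1)_{(-q)^{1-k}}$ to the quotient/subobject $V(\varpi_k)$ --- equivalently, that the $a$-factor ratios in these applications of Lemma~\ref{lem:dvw avw} are genuine units of $\cor[z,z^{-1}]$. This cancellation-free step is where the argument really has to do work.
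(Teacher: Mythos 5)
Your part (I) matches the paper exactly: applying Lemma~\ref{lem:dvw avw} to the fusion surjection $V(\varpi_{k-1})_{(-q)^{-1}}\otimes V(\varpi_1)_{(-q)^{k-1}}\epito V(\varpi_k)$ and cancelling the $a$-factors via \eqref{eq:recursive ank} gives $d_{k,n}(z)\mid d_{k-1,n}(-q^{-1}z)(z-(-q)^{n-k+1})$, which is \eqref{eq:dkn divides dk-1n dn1}.

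Part (II) is where your proposal has a real gap, and where it diverges from the paper. The paper does \emph{not} invoke \eqref{eq:ad} or argue by reducibility of $V(\varpi_k)\otimes V(\varpi_n)_{a}$. Instead it applies Lemma~\ref{lem:dvw avw} a second time, to the \emph{reverse} surjection
\[
V(\varpi_k)_{-q^{-1}}\otimes V(\varpi_1)_{(-q)^{2n-2-k}}\epito V(\varpi_{k-1}),
\]
obtained by composing (a twist of) the injection $\iota_{k-1,1}$ with the trace $V(\varpi_1)_{(-q)^{-k}}\otimes {}^{*}\bigl(V(\varpi_1)_{(-q)^{-k}}\bigr)\to\cor$. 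Since $d_{k,n}$ now sits in the \emph{numerator}, this produces the divisibility opposite to (I). The technical point, which you flag but do not resolve, is precisely that here the $a$-factor ratio is \emph{not} a unit: using \eqref{eq:ank} one finds it is
\[
\frac{a_{k-1,n}(z)}{a_{1,n}((-q)^{k+2-2n}z)\,a_{k,n}(-qz)}\equiv\frac{z-(-q)^{n-k-2}}{z-(-q)^{n-k}},
\]
so the Laurent-integrality statement reads
\[
\frac{d_{k,n}(-qz)}{d_{k-1,n}(z)}\cdot\frac{(z-(-q)^{3n-k-2})(z-(-q)^{n-k-2})}{z-(-q)^{n-k}}\in\cor[z,z^{-1}].
\]
The paper then uses the induction hypothesis that $d_{k-1,n}(z)$ does not vanish at $(-q)^{3n-k-2}$ nor at $(-q)^{n-k-2}$, so the extraneous numerator factors cannot be absorbed into $d_{k-1,n}$, and one deduces $d_{k-1,n}(-q^{-1}z)(z-(-q)^{n-k+1})\mid d_{k,n}(z)$. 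Together with (I) this gives the claimed formula with no symmetry or irreducibility input.

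Your alternative route via \eqref{eq:ad} and reducibility is not wrong in outline — the relation you derive, $d_{k,n}(z)/d_{k,n}(q^{2n-2}z^{-1})\equiv(z-(-q)^{n+k-1})/(z-(-q)^{n-k-1})$, is correct and does force $(-q)^{n+k-1}$ to be a root — but, as you acknowledge, it leaves the interior factors undetermined, and the ``sandwich'' you sketch does not actually construct a lower bound on $d_{k,n}$: Lemma~\ref{lem:dvw avw} in the form you cite only gives upper bounds on the denominator of the quotient module, so you would need to supply exactly the dual surjection above (or a genuine irreducibility argument, which the paper does not attempt for this theorem and which would require a separate analysis of composition series). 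In short, replace your speculative step (II) by a second, downward application of Lemma~\ref{lem:dvw avw} to the surjection onto $V(\varpi_{k-1})$ and do the explicit $a$-factor bookkeeping with \eqref{eq:ank}.
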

\begin{proof}
The case $k=1$ is already proved in \eqref{eq:deno1n}. Assume that $k \geq 2$.
Applying Lemma \ref{lem:dvw avw} to the surjection
$\vpi[(-q)^{-1}]{k-1} \otimes \vpi[(-q)^{k-1}]{1}\rightarrow \vpi{k}$, we have
  \begin{align*}
    \dfrac{d_{k-1,n}(-q^{-1}z) d_{1,n}((-q)^{k-1}z) a_{k,n}(z)}
  { d_{k,n}(z)a_{k-1,n}(-q^{-1}z) a_{1,n}((-q)^{k-1}z)} \in \cor[z,z^{-1}].
  \end{align*}
  By \eqref{eq:recursive ank}, we obtain
\begin{align} \label{eq:dkn divides dk-1n dn1}
    \dfrac{d_{k-1,n}(-q^{-1}z) d_{n,1}((-q)^{k-1}z) }
  {d_{k,n}(z)}
 \equiv\dfrac{d_{k-1,n}(-q^{-1}z) (z-(-q)^{n-k+1}) }
  {d_{k,n}(z)}
    \in \cor[z,z^{-1}].
  \end{align}

On the other hand, for each $2 \le k \le n-2 $, we have a surjective homomorphism
\begin{align*}
V(\varpi_k)_{-q^{-1}} \otimes V(\varpi_1)_{(-q)^{2n-2-k}}
\epito V(\varpi_{k-1}),
  \end{align*}
 given by the composition
\begin{align*}
V(\varpi_k)_{-q^{-1}} &\otimes V(\varpi_1)_{(-q)^{2n-2-k}}
\To[{\iota_{1,k-1}}] \\
&\vpi{k-1}{} \otimes \vpi[(-q)^{-k}]{1}  \otimes V(\varpi_1)_{(-q)^{2n-2-k}}
\To[{\rm tr}] V(\varpi_{k-1}).
  \end{align*}
Applying Lemma \ref{lem:dvw avw} to this surjection,
we obtain
    \begin{align*}
    \dfrac{d_{1,n}((-q)^{k+2-2n}z) d_{k,n}(-qz) a_{k-1,n}(z)}
  { d_{k-1,n}(z)a_{1,n}((-q)^{k+2-2n}z) a_{k,n}(-qz)} \in \cor[z,z^{-1}].
  \end{align*}
By \eqref{eq:ank}, we have
\begin{align*}
    \dfrac{ a_{k-1,n}(z)}
  {a_{1,n}((-q)^{k+2-2n}z) a_{k,n}(-qz) }
  \equiv
  \dfrac{z-(-q)^{n-k-2}}{z-(-q)^{n-k}} &\quad \mod \ \cor[z,z^{-1}]^\times.
\end{align*}
Thus we obtain
    \begin{align}
    \dfrac{ d_{k,n}(-qz) }
  {d_{k-1,n}(z)}
  \dfrac{(z-(-q)^{3n-k-2})(z-(-q)^{n-k-2})}{(z-(-q)^{n-k})} \in \cor[z,z^{-1}].
  \end{align}
By the induction hypothesis, we know that
$d_{k-1,n}(z)$ does not vanish at $z=(-q)^{3n-k-2}$, $(-q)^{n-k-2}$.
It follows that
  \begin{align*}
    \dfrac{ d_{k,n}(-qz) }
  {d_{k-1,n}(z)(z-(-q)^{n-k})} \in \cor[z,z^{-1}],
  \end{align*}
which is equivalent to
\begin{align*}
    \dfrac{ d_{k,n}(z) }
  {d_{k-1,n}(-q^{-1}z)(z-(-q)^{n-k+1})}\in \cor[z,z^{-1}].
  \end{align*}

Comparing with  \eqref{eq:dkn divides dk-1n dn1}, we have
\begin{align*}
  d_{k,n}(z)\equiv d_{k-1,n}(-q^{-1}z)(z-(-q)^{n-k+1})
  =\prod_{s=1}^{k}(z-(-q)^{n-k-1+2s}),
\end{align*}
as desired.
\end{proof}

To determine $d_{k,l}(z)$ for $1 \leq k,l \leq n-2$, we  first calculate
$a_{k,l}(z)$.
\begin{prop}
  For $1 \leq k,l \leq n-2$, we have
  \begin{align}
    a_{k,l} (z) \equiv \dfrac{[|k-l|][2n+k+l-2][2n-k-l-2][4n-|k-l|-4]}
    {[k+l][2n+k-l-2][2n-k+l-2][4n-k-l-4]}.
  \end{align}
\end{prop}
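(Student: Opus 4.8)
The plan is to prove the formula by the recursive method used for $a_{n,k}(z)$ in \eqref{eq:recursive ank}, carefully separating the cases in which the ``spectator'' module behaves transparently from the case $l=1$, where it does not. Throughout, $\equiv$ denotes equality up to $\cor[z,z^{-1}]^\times$, while the recursions produced below are honest identities in $\cor((z))$; since \eqref{eq:a11} determines $a_{1,1}(z)$ up to a unit, it then suffices to check that the claimed closed form satisfies those recursions and reduces to \eqref{eq:a11} when $k=l=1$.

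First I would establish, for $2\le m\le n-2$ and $l\ge m$, the \emph{transparent recursion}
$$a_{l,m}(z)=a_{l,m-1}(-q^{-1}z)\,a_{l,1}((-q)^{m-1}z).$$
The argument is verbatim that of \eqref{eq:recursive ank}: form the commutative square attached (via functoriality of $\Runiv$ and the hexagon identity) to the surjection $p_{m-1,1}\col V(\varpi_{m-1})_{(-q)^{-1}}\otimes V(\varpi_1)_{(-q)^{m-1}}\epito V(\varpi_m)$ of Corollary \ref{cor:k+l to k tensor l}, tensored on the left with $V(\varpi_l)$, and evaluate on $v^{\mathrm{top}}_l\otimes v_{[1,\dots,m-1]}\otimes v_m$, where $v^{\mathrm{top}}_l$ is a dominant extremal vector of $V(\varpi_l)$ and $v_m\in V(\varpi_1)$ is the weight-$\epsilon_m$ vector. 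Since $\langle h_i,\varpi_l\rangle=0$ for $1\le i\le m-1$ when $l\ge m$, the vector $v^{\mathrm{top}}_l$ is killed by $f_1,\dots,f_{m-1}$, so $\Rnorm_{V(\varpi_l),V(\varpi_1)}((-q)^{m-1}z)$ sends $v^{\mathrm{top}}_l\otimes v_m$ to $v_m\otimes v^{\mathrm{top}}_l$ (because $v_m=f_{m-1}\cdots f_1 v_1$), and the computation closes exactly as in \eqref{eq:recursive ank}. The same reasoning also applies to $l\in\{n-1,n\}$.

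Next I would treat $l=1$. Running the identical diagram with the spectator equal to $V(\varpi_1)$ and computing $\Rnorm_{V(\varpi_1),V(\varpi_1)}((-q)^{m-1}z)$ on $v_1\otimes v_m$ via the explicit formula \eqref{eq:rmatrix 11} produces two terms; the one proportional to $v_1\otimes v_m$ has weight $2\epsilon_1+\epsilon_2+\dots+\epsilon_{m-1}$, which is not a weight of $V(\varpi_m)$, hence is annihilated by $p_{m-1,1}\otimes\id$. What survives is
$$a_{1,m}(z)=a_{1,m-1}(-q^{-1}z)\,a_{1,1}((-q)^{m-1}z)\cdot\frac{q\bigl((-q)^{m-1}z-1\bigr)}{(-q)^{m-1}z-q^2}\qquad(2\le m\le n-2).$$
Iterating this from \eqref{eq:a11} yields $a_{1,l}(z)$ for all $1\le l\le n-2$; a routine check, using that the substitution $z\mapsto(-q)^a z$ sends $[m]$ to $[m+a]$ and that $[m]/[m+4n-4]\equiv z-(-q)^{-m}$, shows that the stated formula, restricted to $k=1$, satisfies this recursion.

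Finally, for arbitrary $1\le k,l\le n-2$ I would assume $k\le l$ (the formula and the identity $a_{k,l}(z)=a_{l,k}(z)$ are symmetric), write $a_{k,l}=a_{l,k}$, and apply the transparent recursion repeatedly — legitimate since $l\ge k\ge m$ at each step — to reduce $a_{l,k}(z)$ down to $a_{l,1}(z)=a_{1,l}(z)$, already determined; one telescoping identity in the $[m]$-notation then confirms the closed form. The main obstacle I foresee is the $l=1$ step: pinning down the non-transparency correction factor with the correct spectral argument $(-q)^{m-1}z$, and checking that it is compatible both with the symmetry $a_{1,l}=a_{l,1}$ and with the normalization $a_{1,l}(z)\in q^{(\varpi_1|\varpi_l)}(1+z\cor[[z]])$. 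Everything else is careful but routine bookkeeping of spectral shifts and telescoping products.
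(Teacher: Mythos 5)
Your proposal is correct and follows essentially the same route as the paper's proof: both establish the correction-free recursion $a_{k,l}(z)=a_{k,l-1}(-q^{-1}z)a_{k,1}((-q)^{l-1}z)$ for $2\le l\le k$ via the commutative square associated with $p_{l-1,1}$, treat the first-index-$1$ case separately using the explicit formula \eqref{eq:rmatrix 11} to extract the extra factor $q((-q)^{l-1}z-1)/((-q)^{l-1}z-q^2)$, and then iterate. Your observation that the unwanted term $v_{[1,\dots,m-1]}\otimes v_1\otimes v_m$ is annihilated by $p_{m-1,1}\otimes\id$ because its weight $2\epsilon_1+\epsilon_2+\dots+\epsilon_{m-1}$ is not a weight of $V(\varpi_m)$ is a welcome justification that the paper leaves implicit.
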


\begin{proof}
The case $k=l=1$ is already proved in \eqref{eq:a11}.

 We shall first calculate $a_{1,k}(z)$.
 Assume that $2 \leq k \leq n-2$.
 Consider the following commutative diagram:
   \begin{align*}
    \xymatrix@R=7ex{
    \vpi{1}{} \otimes \vpi[(-q)^{-1}z]{k-1} \otimes \vpi[(-q)^{k-1}z]{1}
    \ar[r] \ar[d]_{\Runiv_{1,k-1}(-q^{-1}z) \otimes \vpi[(-q)^{k-1}z]{1}} &
    \vpi{1} \otimes \vpi[z]{k} \ar[dd]_{\Runiv_{1,k}(z)} \\
    \vpi[(-q)^{-1}z] {k-1}\otimes \vpi{1} \otimes \vpi[(-q)^{k-1}z]{1}
    \ar[d]_{\vpi[(-q)^{-1}z]{k-1} \otimes \Runiv_{1,1}((-q)^{k-1}z)} &\\
    \vpi[(-q)^{-1}z]{k-1} \otimes \vpi[(-q)^{k-1}z]{1}\otimes \vpi{1} \ar[r] &
    \vpi[z]{k} \otimes \vpi{1}
    }
  \end{align*}
 Then we have
\begin{align*}
   \xymatrix@R=3ex{
    v_1 \otimes v_{[1,\ldots, k-1]} \otimes v_{k}
    \ar@{|-_{>}}[r] \ar@{|-_{>}}[d] &
    v_1 \otimes v_{[1,\ldots, k-1,k]} \ar@{|-_{>}}[dd]\\
    a_{1,k-1}(-q^{-1}z) v_{[1,\ldots, k-1]} \otimes v_1 \otimes v_{k}
    \ar@{|-_{>}}[d] &\\
*++{a_{1,k-1}(-q^{-1}z) a_{1,1}((-q)^{k-1}z)
         v_{[1,\ldots, k-1]} \otimes w} \ar@{|-_{>}}[r] &
   a_{1,k}(z) v_{[1,\ldots, k-1,k]} \otimes v_1,
    }
\end{align*}
where $w=\Rnorm_{1,1}((-q)^{k-1}z)( v_1 \otimes v_{k})$.
 By \eqref{eq:rmatrix 11}, we have
\begin{align*}
 w= \Rnorm_{1,1}((-q)^{k-1}z)(v_1 \otimes v_{k})=
 \dfrac{q((-q)^{k-1}z -1 )}{(-q)^{k-1}z -q^2}v_{k} \otimes v_1
+\dfrac{1-q^2}{(-q)^{k-1}z -q^2}v_{1} \otimes v_k,
\end{align*}
and hence
\begin{align*}
a_{1,k}(z) &\equiv a_{1,k-1}(-q^{-1}z) \ a_{1,1}((-q)^{k-1}z)
 \dfrac{q((-q)^{k-1}z -1 )}{(-q)^{k-1}z -q^2} \\
&\equiv a_{1,k-1}(-q^{-1}z) \ a_{1,1}((-q)^{k-1}z)
\dfrac{[k-1][4n+k-7]}{[k-3][4n+k-5]}.
\end{align*}
for $2 \leq k \leq n-2$. Then, by induction on $k$, we have
\begin{align} \label{eq:a1k}
  a_{1,k}(z)= a_{k,1}(z) \equiv\dfrac{[k-1][2n+k-1][2n-k-3][4n-k-3]}{[k+1][2n+k-3][2n-k-1][4n-k-5]}.
\end{align}

\medskip
Now we shall calculate $a_{k,l}(z)$ for
$2 \leq l \leq k \leq n-2$.
Consider the following diagram:
   \begin{align*}
    \xymatrix@R=6.5ex{
    \vpi{k} \otimes \vpi[(-q)^{-1}z]{l-1} \otimes \vpi[(-q)^{l-1}z]{1}
    \ar[r] \ar[d]_{\Runiv_{k,l-1}(-q^{-1}z) \otimes \vpi[(-q)^{l-1}z]{1}} &
    \vpi{k} \otimes \vpi[z]{l}\ar[dd]_{\Runiv_{k,l}(z)} \\
    \vpi[(-q)^{-1}z] {l-1}\otimes \vpi{k} \otimes \vpi[(-q)^{l-1}z]{1}
    \ar[d]_{\vpi[(-q)^{-1}z]{l-1}\otimes \Runiv_{k,1}((-q)^{l-1}z)} &\\
    \vpi[(-q)^{-1}z]{l-1} \otimes \vpi[(-q)^{l-1}z]{1} \otimes \vpi{k}\ar[r] &
    \vpi[z]{l} \otimes \vpi{k}
    }
  \end{align*}
 Then we have
\begin{align*}
   \xymatrix@R=3.5ex{
    v_{[1,\ldots,k]} \otimes v_{[1,\ldots, l-1]} \otimes v_{l}
    \ar@{|-_{>}}[r] \ar@{|-_{>}}[d] &
    v_{[1,\ldots,k]} \otimes v_{[1,\ldots, l-1,l]} \ar@{|-_{>}}[dd]\\
    a_{k,l-1}(-q^{-1}z) v_{[1,\ldots, l-1]} \otimes v_{[1,\ldots,k]} \otimes v_{l}
    \ar@{|-_{>}}[d] &\\
*++{a_{k,l-1}(-q^{-1}z) a_{k,1}((-q)^{l-1}z)
         v_{[1,\ldots, l-1]} \otimes w} \ar@{|-_{>}}[r] &
   a_{k,l}(z) v_{[1,\ldots, l-1,l]} \otimes v_{[1,\ldots,k]},
    }
\end{align*}
where $w=\Rnorm_{k,1}((-q)^{l-1}z)( v_l \otimes v_{[1,\dots,k]})$.
 Since
\begin{align*}
f_{l-1} f_{l-2} \cdots f_1 (v_{[1,\dots,k]} \otimes v_1) =   v_{[1,\dots,k]} \otimes v_l,\\
f_{l-1} f_{l-2} \cdots f_1 (v_1 \otimes v_{[1,\dots,k]} ) =  v_l \otimes  v_{[1,\dots,k]},
\end{align*}
 we have
\begin{align*}
 w= \Rnorm_{k,1}((-q)^{l-1}z)(v_l \otimes v_{[1,\dots,k]})=
 v_{[1,\dots,k]} \otimes v_l.
\end{align*}
It follows that
\begin{align} \label{eq:recursive akl}
a_{k,l}(z) = a_{k,l-1}(-q^{-1}z) \ a_{k,1}((-q)^{l-1}z) &\quad
(2 \leq l \leq k \leq n-2).
\end{align}
Using \eqref{eq:a1k}, \eqref{eq:recursive akl} and induction on $l$, we have
$$ a_{k,l} (z)\equiv\dfrac{[k-l][2n+k+l-2][2n-k-l-2][4n-k+l-4]}
    {[k+l][2n+k-l-2][2n-k+l-2][4n-k-l-4]}$$
   for $2 \le \l \le k \le n-2$.
    Because $a_{k,l}(z) = a_{l,k}(z)$, we obtain the desired result.
\end{proof}

For $1 \leq k, l \leq n-2$, set
\begin{align}
  D_{k,l}(z) = \prod_{s=1}^{\min(k,l)} (z-(-q)^{|k-l|+2s})(z-(-q)^{2n-k-l-2+2s}).
\end{align}
Then $D_{k,l}(z)=D_{l,k}(z)$ and we have
\eqn&&\ba{rl}
  a_{k,l}(z) a_{k,l}((-q)^{-2n+2}z) &\equiv
  \dfrac{(z-(-q)^{k+l})(z-(-q)^{2n-|k-l|-2})}%
  {(z-(-q)^{|k-l|})(z-(-q)^{2n-k-l-2})} \\[1ex]
  &\equiv \dfrac{D_{k,l}(z)}{D_{k,l}(q^{2n-2}z^{-1})}.\ea
  \eneqn
Hence \eqref{eq:ad} implies that
\eq
&&\dfrac{D_{i,j}(z)}{d_{i,j}(z)}\equiv
\dfrac{D_{i,j}(q^{2n-2}z^{-1})}{d_{i,j}(q^{2n-2}z^{-1})}.
\label{eq:Dd}
\eneq
By direct calculation, we also have
\begin{align} \label{eq:dkl dkl-1 k1}
  D_{k,l}(z) = D_{k,l-1}(-q^{-1}z)D_{k,1}((-q)^{l-1}z)
  & \quad (2 \leq l \le k \leq n-2).
\end{align}

Next, we will determine the denominators of the normalized $R$-matrices between the fundamental representations $\vpi{k}{}$ and $\vpi{l}{}$ for $1 \leq k,l \leq n-2$.
\begin{theorem}
For $1\leq k,l \leq n-2$, we have
\begin{align}
  d_{k,l}(z) =  D_{k,l}(z) =\prod_{s=1}^{\min (k,l)} (z-(-q)^{|k-l|+2s})(z-(-q)^{2n-k-l-2+2s}).
\end{align}
\end{theorem}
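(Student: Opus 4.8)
The plan is to prove $d_{k,l}(z)=D_{k,l}(z)$ by induction, following closely the argument just given for the spin‑versus‑other denominators $d_{k,n}$. By $\eqref{eq:symd}$ it suffices to treat $1\le l\le k\le n-2$. The induction is on $k$, and for each $k$ on $l$; the anchor is $k=l=1$, which is $\eqref{eq:deno11}$ since $D_{1,1}(z)=(z-(-q)^2)(z-(-q)^{2n-2})$. The first block to settle is $d_{1,k}(z)=D_{1,k}(z)$ for $1\le k\le n-2$: here I would apply Lemma~\ref{lem:dvw avw} with $W=\vpi{1}{}$ to the surjection $\vpi[(-q)^{-1}]{k-1}\otimes\vpi[(-q)^{k-1}]{1}\epito\vpi{k}{}$ from Corollary~\ref{cor:k+l to k tensor l}, simplify the resulting ratio of $a$‑factors using the recursion behind $\eqref{eq:a1k}$ (whose extra factor $\tfrac{[k-1][4n+k-7]}{[k-3][4n+k-5]}$ equals $\tfrac{z-(-q)^{1-k}}{z-(-q)^{3-k}}$ up to a unit of $\cor[z,z^{-1}]$), invoke the induction hypothesis together with the fact---used already in the proof of Lemma~\ref{lem:simple poles boundary}---that $\Rnorm_{\vpi{1}{},\vpi{k}{}}(z)$ has no pole at $z=(-q)^s$ for $s\le 0$, and finally pin down the last superfluous linear factor with the symmetry relation $\eqref{eq:Dd}$ (or, alternatively, with the dual surjection $\vpi[(-q)^{-1}]{k}\otimes\vpi[(-q)^{2n-2-k}]{1}\epito\vpi{k-1}{}$, exactly as for $d_{k,n}$).

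For the general inductive step, fix $k$ and let $2\le l\le k$. \emph{Upper bound.} Apply Lemma~\ref{lem:dvw avw} with $W=\vpi{k}{}$ to the surjection $\vpi[(-q)^{-1}]{l-1}\otimes\vpi[(-q)^{l-1}]{1}\epito\vpi{l}{}$ of Corollary~\ref{cor:k+l to k tensor l}. Since $a_{k,l}(z)=a_{k,l-1}(-q^{-1}z)\,a_{k,1}((-q)^{l-1}z)$ by $\eqref{eq:recursive akl}$, the ratio of $a$‑factors is trivial and one is left with
\[
\frac{d_{k,l-1}(-q^{-1}z)\,d_{k,1}((-q)^{l-1}z)}{d_{k,l}(z)}\in\cor[z,z^{-1}].
\]
By the induction hypothesis $d_{k,l-1}=D_{k,l-1}$, by the first block $d_{k,1}=d_{1,k}=D_{1,k}=D_{k,1}$, and by $\eqref{eq:dkl dkl-1 k1}$ the numerator equals $D_{k,l}(z)$; hence $d_{k,l}(z)\mid D_{k,l}(z)$ up to a monomial. \emph{Lower bound.} Dualize the embedding $\iota_{1,l-1}\col\vpi{l}{}\mono\vpi[(-q)^{l-1}]{1}\otimes\vpi[(-q)^{-1}]{l-1}$ of Corollary~\ref{cor:k+l to k tensor l} and compose with a trace to obtain a surjection $\vpi[(-q)^{-1}]{l}\otimes\vpi[(-q)^{2n-2-l}]{1}\epito\vpi{l-1}{}$; apply Lemma~\ref{lem:dvw avw} with $W=\vpi{k}{}$, simplify the ratio of $a$‑factors with the explicit product formula for $a_{k,l}(z)$, and cancel---against the already known $d_{k,l-1}$, which one checks does not vanish at the points produced---the spurious linear factors, arriving at $D_{k,l}(z)\mid d_{k,l}(z)$ up to a monomial. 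Combining the two divisibilities gives $d_{k,l}(z)\equiv D_{k,l}(z)$; since both are monic polynomials with non‑zero constant term, the monomial is trivial and $d_{k,l}(z)=D_{k,l}(z)$.

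The delicate point is the lower bound: one must produce the surjection onto $\vpi{l-1}{}$ with exactly the right spectral parameters (which requires handling the duality of the maps in Corollary~\ref{cor:k+l to k tensor l} carefully, as in the treatment of $d_{k,n}$), and then verify that, after inserting the explicit formula for $a_{k,l}$, the ratio of $a$‑factors collapses to precisely the linear factors needed so that dividing by the inductively known $d_{k,l-1}(-q^{-1}z)$ leaves $D_{k,l}(z)/D_{k,l-1}(-q^{-1}z)$ on the nose; this is where the \emph{double} poles of $D_{k,l}$ (present for $k+l\ge n$) must be correctly accounted for, so the cancellation bookkeeping is genuinely the crux. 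One must also respect the order of the induction, settling all of $d_{1,k}$ before the main double induction begins, so that $d_{k,1}$ is available at every step.
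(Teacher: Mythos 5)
Your overall strategy—double induction, Lemma~\ref{lem:dvw avw} applied to the surjection of Corollary~\ref{cor:k+l to k tensor l} for the upper bound, the dual surjection plus the symmetry \eqref{eq:Dd} for the lower bound, settling all of $d_{1,k}$ first—is exactly the paper's. However, there is a genuine gap: you do not invoke Lemma~\ref{le:z-q^n-1}, and without it the lower bound argument cannot close in the boundary cases. Concretely, when $(k,l)=(n-2,1)$ the dual‑surjection divisibility \eqref{eq:dk1 is diveded by blabla} gives only $d_{1,k}(z)\bigm/\bigl(z-(-q)^{2n-k-1}\bigr)\in\cor[z,z^{-1}]$ after the cancellations and the symmetry \eqref{eq:Dd}; the factor $z-(-q)^{k+1}$ is simply not produced. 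Similarly, for general $2\le l\le k$ with $k+l=n-1$, the combination of \eqref{eq:d1l dkl dk-1l l <k}/\eqref{eq:d1l dkl dk-1l l =k} and \eqref{eq:Dd} shows only that the ratio $\psi_{k,l}=D_{k,l}/d_{k,l}$ divides $z-(-q)^{k+l}$; nothing in your ``cancellation bookkeeping'' rules out $\psi_{k,l}(z)=z-(-q)^{k+l}$.

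The missing ingredient is precisely the surjection $\vpi[(-q)^{-l}]{k}\otimes\vpi[(-q)^{k}]{l}\epito\vpi{n-1}{}\otimes\vpi{n}{}$ of Lemma~\ref{le:z-q^n-1} (valid exactly when $k+l=n-1$), which forces $\Rnorm_{\vpi{k}{},\vpi{l}{}}(z)$ to have a pole, hence $d_{k,l}(z)$ a zero, at $z=(-q)^{k+l}=(-q)^{n-1}$; since $D_{k,l}$ has only a simple zero there, this kills $\psi_{k,l}$. Your remark that ``one checks $d_{k,l-1}$ does not vanish at the points produced'' is pointing in the wrong direction: at $k+l=n-1$ one must establish that $d_{k,l}$ \emph{does} vanish at $(-q)^{k+l}$, and the $R$‑matrix side of the ledger (Lemma~\ref{lem:dvw avw} plus the universal‑$R$ computations of $a_{k,l}$) alone cannot give this; it requires the independent representation‑theoretic input of Lemma~\ref{le:z-q^n-1}. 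You also correctly flag the double zeros of $D_{k,l}$ for $k+l\ge n$ as delicate, but that is actually handled mechanically by the divisibility and \eqref{eq:Dd}; the real crux is the simple zero at $(-q)^{n-1}$ in the borderline case, which your proposal as written leaves unproved.
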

\begin{proof}
It is true for the case $k=l=1$ by \eqref{eq:deno11}.

Assume that $1 \leq k \leq n$ and $2 \leq l \leq n-2$.
By Corollary \ref{cor:k+l to k tensor l}, we have a surjective homomorphism
\begin{align*}
  p_{l-1,1} \col V(\varpi_{l-1})_{-q^{-1}} \otimes V_{(-q)^{l-1}}
  \epito V(\varpi_{l}).
  \end{align*}
Hence by Lemma \ref{lem:dvw avw}, we obtain
\begin{align}
\dfrac{d_{k,l-1}(-q^{-1}z)d_{k,1}((-q)^{l-1}z) a_{k,l}(z)}%
{d_{k,l}(z) a_{k,l-1}(-q^{-1}z) a_{k,1}((-q)^{l-1}z)} \in \cor[z, z^{-1}]
& \quad (1 \leq k \leq n, \ 2 \leq l \leq n-2).
\label{eq:dk1}
\end{align}
If $2 \le l \le k \le n-2$, then by \eqref{eq:recursive akl}, we have
\begin{align} \label{eq:dkl divides dkl-1 dk1}
  \dfrac{d_{k,l-1}(-q^{-1}z)d_{k,1}((-q)^{l-1}z)}{d_{k,l}(z) } \in \cor[z, z^{-1}] & \quad (2 \leq l \leq  k \leq n-2).
\end{align}
By \eqref{eq:a1k}, we have
$$\dfrac{a_{1,l}(z)}{a_{1,l-1}(-q^{-1}z) a_{1,1}((-q)^{l-1}z)} \equiv
\dfrac{(z-(-q)^{1-l})}{(z-(-q)^{3-l})}$$
for $2  \leq l \leq n-2$.
Hence setting $k=1$ in \eqref{eq:dk1} and then replacing $l$ with $k$,
we obtain
\eq&&
\ba{l}
 \dfrac{d_{1,k-1}(-q^{-1}z)D_{1,1}((-q)^{k-1}z)}{d_{1,k}(z)}
  \dfrac{(z-(-q)^{1-k})}{(z-(-q)^{3-k})} \\[1ex]
  \hs{8ex}\equiv \dfrac{d_{1,k-1}(-q^{-1}z)(z-(-q)^{2n-k-1})
  (z-(-q)^{1-k})}{d_{1,k}(z)}
    \in \cor[z, z^{-1}]\\[1ex]
    \hs{55ex}\text{for $2 \leq k\leq n-2$.}\ea
    \label{eq:d1l divides blabla}
\eneq

On the other hand, from the surjective homomorphism
\begin{align*}
V(\varpi_k)_{-q^{-1}} \otimes V(\varpi_1)_{(-q)^{2n-2-k}} \epito V(\varpi_{k-1}),
  \end{align*}
  we obtain
\begin{align*}
\dfrac{d_{1,l}((-q)^{k+2-2n}z)d_{k,l}(-qz) a_{k-1,l}(z)}{d_{k-1,l}(z) a_{k,l}(-qz) a_{1,l}((-q)^{k+2-2n}z)} \in \cor[z, z^{-1}] & \quad (2 \leq k \leq n-2, \  1 \leq l \leq n).
\end{align*}
Since
\begin{align*}
\dfrac{a_{k-1,l}(z)}
 {a_{k,l}(-qz) a_{1,l}((-q)^{k+2-2n}z)}   \equiv
\begin{cases}
  \dfrac{z-(-q)^{2n-k-l-3}}{z-(-q)^{2n-k-l-1}}
  & \text{if}  \ 1 \leq l < k \leq n-2, \\[2ex]
 \dfrac{( z-(-q)^{2n-k-l-3}) }{( z-(-q)^{2n-k-l-1}) }
 \dfrac{( z-(-q)^{-1} )}{( z-(-q) )}
  & \text{if} \ 2 \leq l = k \leq n-2,
 \end{cases}
\end{align*}
 we have
\begin{align}
&\dfrac{d_{1,l}((-q)^{k+2-2n}z)d_{k,l}(-qz) }{d_{k-1,l}(z)}
\dfrac{(z-(-q)^{2n-k-l-3})}{(z-(-q)^{2n-k-l-1})}
 \in \cor[z, z^{-1}] && \text{if $1 \leq l  < k \leq n-2$,}
  \label{eq:d1l dkl dk-1l l <k}\end{align}
and
\eq
&&\ba{l}\dfrac{d_{1,l}((-q )^{k+2-2n}z)d_{k,l}(-qz) }{d_{k-1,l}(z)}
 \dfrac{(z-(-q)^{2n-k-l-3})}{(z-(-q)^{2n-k-l-1})}
  \dfrac{(z-(-q)^{-1})}{(z-(-q))}
  \in \cor[z, z^{-1}]\\[1.8ex]
  \hs{50ex}\text{if $2 \leq l = k \leq n-2$.}\ea
    \label{eq:d1l dkl dk-1l l =k}
\eneq
Setting $l=1$ in \eqref{eq:d1l dkl dk-1l l <k}, we obtain
\begin{align} \label{eq:dk1 is diveded by blabla}
 \dfrac{D_{1,1}((-q)^{k+1-2n}z)d_{k,1}(z) }{d_{k-1,1}((-q)^{-1}z)} \dfrac{(z-(-q)^{2n-k-3})}{(z-(-q)^{2n-k-1})}
\quad \in \cor[z, z^{-1}] &\quad \text{for $2 \leq k \leq n-2$.}
\end{align}
\medskip

Now we will show $$d_{1,k}(z)=D_{1,k}(z)=(z-(-q)^{k+1})(z-(-q)^{2n-k-1})$$
 for $2 \leq k \leq n-2$.
By induction on $k$ in \eqref{eq:d1l divides blabla}, we have
\begin{align}\label{eq:d1k}
 &\dfrac{D_{1,k-1}(-q^{-1}z)(z-(-q)^{2n-k-1})(z-(-q)^{1-k})}{d_{1,k}(z)} \\
 =&(-q)^{-1}\dfrac{(z-(-q)^{k+1})(z-(-q)^{2n-k+1})
 (z-(-q)^{2n-k-1})(z-(-q)^{1-k})}{d_{1,k}(z)}
     \quad \in \cor[z, z^{-1}].\nonumber
\end{align}
for  $2 \leq k \leq n-2$.
Note that $z=(-q)^{1-k}$ is not a zero of $d_{1,k}(z)$
 since $(-q)^{1-k}\not\in \C[[q]]$.

 We shall show that $z=(-q)^{2n-k+1}$ is not a zero of $d_{1,k}(z)$ either.
By \eqref{eq:d1k}, $z=(-q)^{k-3}$ is not a zero of $d_{1,k}(z)$.
Hence $z=(-q)^{k-3}$ is not a pole of $D_{1,k}(z)/d_{1,k}(z)$, and therefore
 $z=(-q)^{2n-k+1}$ is not a pole of $D_{1,k}((-q)^{2n-2}z^{-1})/
d_{1,k}((-q)^{2n-2}z^{-1})$. Now \eqref{eq:Dd} implies that
$z=(-q)^{2n-k+1}$ is not a pole of $D_{1,k}(z)/d_{1,k}(z)$, which implies that
 $z=(-q)^{2n-k+1}$ is not a zero of $d_{1,k}(z)$.

Then, from \eqref{eq:d1k} we conclude that
\eq
\label{eq:dl1 divides}
&&\hs{2ex}\dfrac{D_{1,k}(z)}{d_{1,k}(z)}=\dfrac{(z-(-q)^{k+1})(z-(-q)^{2n-k-1})}{d_{1,k}(z)}
    \in \cor[z, z^{-1}]\ (2 \leq k \leq n-2).
\eneq
On the other hand, by induction on $k$ in \eqref{eq:dk1 is diveded by blabla},
we obtain
\begin{align}
  &\dfrac{d_{1,k}(z)}{(z-(-q)^{k+1})(z-(-q)^{2n-k-1})}
  \quad \in \cor[z, z^{-1}]  &&\quad \text{if} \ k \neq n-2,\\
  &\dfrac{d_{1,k}(z)}{z-(-q)^{2n-k-1}} \quad  \in \cor[z, z^{-1}] &&\quad \text{if} \ k = n-2.
\end{align}
By Lemma \ref{le:z-q^n-1}, $d_{1,k}(z)=0$ at $z=(-q)^{ k+1}$, we have
\eqn&&
\dfrac{d_{1,k}(z)}{D_{1,k}(z)}=\dfrac{d_{1,k}(z)}{(z-(-q)^{k+1})(z-(-q)^{2n-k-1})}
\in \cor[z, z^{-1}]\  (2\leq k \leq n-2).
\eneqn
Combining \eqref{eq:dl1 divides}, we conclude that
\begin{align}
d_{1,k}(z)= (z-(-q)^{k+1})(z-(-q)^{2n-k-1})=D_{1,k}(z) &\quad  (2 \leq k \leq n-2).
\end{align}
\medskip

Now we will show that  $d_{k,l}(z)=D_{k,l}(z)$ for $2 \le k, l \le n-2$
by induction on $k+l$.
Assume that $2 \leq l \leq k \leq n-2$.
By induction on $k+l$ in \eqref{eq:dkl divides dkl-1 dk1} and the relation
\eqref{eq:dkl dkl-1 k1}, we have
\eq&&\ba{l}
  \dfrac{d_{k,l-1}(-q^{-1}z)d_{k,1}((-q)^{l-1}z)}{d_{k,l}(z) }\\[1ex]
 \hs{10ex}=\dfrac{D_{k,l-1}(-q^{-1}z)D_{k,1}((-q)^{l-1}z) }{d_{k,l}(z) }
  \\
  \hs{20ex}=\dfrac{D_{k,l}(z)}{d_{k,l}(z)} \
   \in \cor[z, z^{-1}] \quad (2 \leq l \leq  k \leq n-2).\ea
\eneq
Define $\psi_{k,l}(z) \in \cor[z]$
by $D_{k,l}(z) = d_{k,l}(z) \psi_{k,l}(z)$.
We will show that $\psi_{k,l}(z)=1$ for $2 \leq l \leq k \leq n-2$.
Since
\begin{align*}
  &\dfrac{D_{1,l}((-q)^{k+2-2n}z)D_{k,l}((-q)z) }{D_{k-1,l}(z)}
  \dfrac{ (z-(-q)^{2n-k-l-3} ) }{( z-(-q)^{2n-k-l-1}) }  \\
  &\hs{10ex}=\begin{cases}
(z-(-q)^{4n-k-l-3})(z-(-1)^{2n-k-l-3}) & \text{if $l < k$,}\\[1.5ex]
 (z-(-q)^{4n-k-l-3})(z-(-1)^{2n-k-l-3}) (z-(-q)^{2n-1})(z+q)
 & \text{if $k=l$,}
   \end{cases}
\end{align*}
the relations \eqref{eq:d1l dkl dk-1l l <k},
\eqref{eq:d1l dkl dk-1l l =k} and induction on $k+l$ imply that
we have
\begin{align}
&\dfrac{(z-(-q)^{4n-k-l-3})(z-(-q)^{2n-k-l-3})}{\psi_{k,l}(-qz)} \ \in  \cor[z, z^{-1}]& \text{if} \ l < k,\\
&\dfrac{(z-(-q)^{4n-k-l-3})(z-(-q)^{2n-k-l-3})(z-(-q)^{2n-1})(z+q^{-1})}{\psi_{k,l}(-qz)}
\ \in  \cor[z, z^{-1}]
 & \text{if} \ k=l.
\end{align}
Since $\psi_{k,l}(-qz)$ divides $D_{k,l}(-qz)$,
we conclude that
\begin{align}
&\psi_{k,l}(z)=1 &&\quad \text{if } \ k+l < n-1, \\
&\dfrac{(z-(-q)^{2n-k-l-2})}{\psi_{k,l}(z)} \ \in  \cor[z, z^{-1}]
&&\quad \text{if} \ k+l \geq n-1.\label{eq:psi}
\end{align}

Hence it is enough to show that
$z=(-q)^{2n-k-l-2}$ is not a zero of $\psi_{k,l}(z)$ when $k+l\ge n-1$.
The relation \eqref{eq:Dd} implies that
$\psi_{k,l}(q^{2n-2}z^{-1})\equiv\psi_{k,l}(z)$.
Hence it is enough to show that
$z=(-q)^{k+l}$ is not a zero of $\psi_{k,l}(z)$ when $k+l\ge n-1$.
It is a consequence of \eqref{eq:psi} if $k+l>n-1$.

Assume that  $k+l=n-1$. Then Lemma \ref{le:z-q^n-1}
implies that $z=(-q)^{k+l}$ is a zero of $d_{k,l}(z)$.
Since $z=(-q)^{k+l}$ is a simple zero of $D_{k,l}(z)$,
we conclude that $z=(-q)^{k+l}$ is not a zero of $\psi_{k,l}(z)$
when $k+l=n-1$.
\end{proof}


\begin{thebibliography}{99}
\nocite{}

\bibitem{ASS}
I. Assem, D. Simson and A. Skowro\'{n}ski, \emph{Elements of the representation theory of associative algebras, Vol.1}, London Math. Soc. Student Texts {\bf65}, Cambridge 2006.


\bibitem{AK} T. Akasaka and M. Kashiwara,
{\it Finite-dimensional representations of quantum affine algebras},
Publ. RIMS. Kyoto Univ., {\bf33} (1997), 839--867.


\bibitem{McNa12} P. J. McNamara,
{\it Finite dimensional representations of Khovanov-Lauda-Rouquier algebras I: Finite Type},
arXiv:1207.5860v3 [math.RT].


\bibitem{BKM12} J. Brundan, A. Kleshchev and  P. J. McNamara,
{\it Homological properties of finite Khovanov-Lauda-Rouquier algebras},
arXiv:1210.6900v3 [mathRT].



\bibitem{Chari} V. Chari,
{\it Braid group actions and tensor products}, Int. Math. Res. Not. {\bf2002} (7) (2010), 357-382.


\bibitem{CP94} V. Chari and A. Pressley,
{\em A guide to Quantum Groups}, Cambridge U. Press, Cambridge, 1994.


\bibitem{CP95} V. Chari and A. Pressley,
{\em Quantum affine algebras and their representations}, in Representations of
groups (Banff, AB, 1994), CMS Conf. Proc., {\bf16}, Amer. Math. Soc., Providence, RI, 1995, 59--78.

\bibitem{CP98} \bysame
{\it Twisted quantum affine algebras},
Commun. Math. Phys. {\bf196} (2) (1998), 461--476.



\bibitem{DO94}
E. Date and  M. Okado,
{\it Calculation of excitation spectra of the spin model related with the vector representation of the quantized affine algebra of type $A^{(1)}_n$},
Internat. J. Modern Phys. A {\bf 9} (3) (1994), 399--417.

\bibitem{DO96}
B. Davies  and  M. Okado,
{\it Excitation spectra of spin models constructed from
quantized affine algebras of type $B_n^{(1)}, D_n^{(1)}$},
Internat. J. Modern Phys. A {\bf11} (11) (1996), 1975--2017.


\bibitem{FM01} E. Frenkel  and  E. Mukhin, {\it Combinatorics of $q$-characters of finite-dimensional
representations of quantum affine algebras}, Commun. Math. phys. {\bf216} (2001), 23--57.


\bibitem{FR99}
E. Frenkel and N. Yu. Reshetikhin,
{\it The q-characters of representations of quantum affine algebras
and deformations of W-algebras, Recent developments in quantum affine algebras and
related topics},
Contemp. Math. {\bf248} (1999), 163--205.


\bibitem{IFR92}
I. B. Frenkel  and  N. Yu. Reshetikhin,
{\it Quantum affine algebras and holonomic difference equations},
Commun. Math. Phys.  {\bf146} (1992), 1--60.


\bibitem{Gab}
P. Gabriel,
{\it Auslander-Reiten sequences and Representation-finite algebras},
Lecture notes in Math., vol. {\bf 831}, Springer-Verlag, Berlin and New York, 1980, pp.1--71.


\bibitem{GLS11b}
C. Gei{\ss}, B,  Leclerc   and  J. Schr\"{o}r,
{\it Cluster structures on quantum coordinate rings},
Selecta Math. (N. S.) {\bf19}, (2013), 337-397.

\bibitem{Ha}
D. Happel, {\it Triangulated categories in the representation theory of finite-dimensional algebras}, London Math. Soc.
Lecture Notes {\bf 119}, Cambridge 1988.

\bibitem{Her101}
D. Hernandez,
{\it Kirillov-Reshetikhin conjecture: the general case},
Int. Math. Res. Not. {\bf2010} (1) (2010), 149--193.

\bibitem{HL10}
D. Hernandez  and  B. Leclerc,
{\it Cluster algebras and quantum affine algebras},
Duke Math. J. {\bf 154} (2) (2010), 265--341.


\bibitem{HL11}
\bysame,
\emph{Quantum Grothendieck rings and derived Hall algebras},
arXiv:1109.0862v2 [math.QA].



\bibitem{KL09}
M.~Khovanov  and  A. Lauda, \emph{A diagrammatic approach to
categorification of quantum groups
  {I}}, Represent. Theory \textbf{13} (2009), 309--347.


\bibitem{KMN2}
S.-J. Kang, M.~Kashiwara, K.~C. Misra, T.~Miwa, T.~Nakashima
 and A. Nakayashiki,
 \newblock \emph{Perfect crystals of quantum affine {L}ie algebras},
Duke Math. J. \textbf{68} (1992), 499--607.

\bibitem{KKK13} S.-J. Kang, M. Kashiwara and  M. Kim,
\newblock{\it Symmetric quiver Hecke algebras and R-matrices of quantum affine algebras},
\newblock{arXiv:1304.0323 [math.RT]}.

\bibitem{Kac} V. Kac,
\newblock{\it Infinite dimensional Lie algebras},
\newblock{3rd ed., Cambridge University Press, Cambridge, 1990}.

\bibitem{Kas91}
M. Kashiwara,
\newblock {\it On crystal bases of the $q$-analogue of
universal enveloping algebras},
\newblock Duke Math. J. {\bf 63} (1991), 465--516.

\bibitem{Kas932}
\bysame, \emph{The crystal base and Littelmann's refined
 Demazure character formula}, Duke Math. J.
 \textbf{71} (1993), 839--858.



\bibitem{Kas02}
\bysame,
{\it On level zero representations of quantum affine algebras},
Duke. Math. J. {\bf112} (2002), 117--175.

\bibitem{Kato12} S. Kato,
{\it PBW bases and KLR algebras},
arXiv:1203.5254v4 [math.QA].

\bibitem{Kimura12} Y. Kimura,
\newblock {\it Quantum unipotent subgroup and dual canonical basis},
\newblock Kyoto J. Math. {\bf52} (2) (2012), 277--331.

\bibitem{KQ12} Y. Kimura and  F. Qin,
{\it Graded quiver varieties, quantum cluster algebras and dual canonical basis},
arXiv:1205.2066v2.

\bibitem{KT} A. Kirillov Jr. and  J. Thind,
{\em Categorical construction of $A,D,E$ root systems},
arXiv:1007.2623v3 [math.RT]

\bibitem{Koga98}
Y. Koga,
{\it Commutation relations of vertex operators related with the spin representation of $U_q(D_n^{(1)})$},
Osaka J. Math., {\bf35}, (1998), 447--486.

\bibitem{Lus90} G. Lusztig,
{\it Canonical bases arising from quantized enveloping algebras}, J. Amer. Math. Soc. {\bf3} (1990), 447--498.

\bibitem{Lus93} \bysame,
{\it Introduction to Quantum Groups},
Progress in Mathematics, {\bf110}, Birkh\"auser Boston (1993),
341 pp.

\bibitem{Nak041}
H. Nakajima, {\it Quiver varieties and $t$-analogue of $q$-characters of quantum affine algebras},
Annals of Math. {\bf160} (2004), 1057--1097.

\bibitem{Nak042}
\bysame, {\it Extremal weight modules of quantum affine algebras, Representation
Theory of Algebraic Groups and Quantum Groups},
Adv. Stud. Pure Math. {\bf40} (2004), 343--369.

\bibitem{Okado90}
M. Okado, {\it Quantum $R$ matrices related to the spin representations of $B_n$ and $D_n$},
\newblock Commun. Math. Phys. {\bf 134} (1990), 467--486.


\bibitem{Papi94}
P. Papi, {\it A characterization of a special ordering in a root system}, Proc. Amer. Math. {\bf120}
 (1994), 661--665.

\bibitem{R08}
R.~Rouquier, \emph{2-{K}ac-{M}oody algebras},    arXiv:0812.5023v1 [math.RT].

\bibitem{R11}
\bysame, {\it Quiver Hecke algebras and 2-Lie algebras},
arXiv:1112.3619v1.

\bibitem{VV02}
M. Varagnolo and E. Vasserot,
{\it Perverse sheaves and quantum Grothendieck rings,
in Studies in memory of Issai Schur}, Prog. Math. {\bf210}, Birkh\"auser, 2002, 345--365.


\bibitem{VV09}
\bysame, \emph{Canonical bases and KLR
  algebras}, J. reine angew. Math. \textbf{659} (2011), 67--100.

\end{thebibliography}
\end{document}